\tikzstyle{vertex}=[circle, draw, fill=black, inner sep=0pt, minimum size=6pt]
\newcommand{\vertex}{\node[vertex]}
\numberwithin{equation}{section}
\numberwithin{figure}{section}
\theoremstyle{plain}
\newtheorem{thm}{\protect\theoremname}[section]
  \theoremstyle{definition}
  \newtheorem{defn}[thm]{\protect\definitionname}
 \theoremstyle{definition}
 \newtheorem*{defn*}{\protect\definitionname}
  \theoremstyle{remark}
  \newtheorem{rem}[thm]{\protect\remarkname}
  \theoremstyle{plain}
  \newtheorem{lem}[thm]{\protect\lemmaname}
  \theoremstyle{plain}
  \newtheorem{cor}[thm]{\protect\corollaryname}
  \theoremstyle{plain}
  \newtheorem{prop}[thm]{\protect\propositionname}
  \theoremstyle{plain}
  \newtheorem{claim}[thm]{\protect\claimname}
  \providecommand{\claimname}{Claim}
  \providecommand{\corollaryname}{Corollary}
  \providecommand{\definitionname}{Definition}
  \providecommand{\lemmaname}{Lemma}
  \providecommand{\propositionname}{Proposition}
  \providecommand{\remarkname}{Remark}
\providecommand{\theoremname}{Theorem}
\author{Jonathan Breuer and Netanel Levi\footnote{Institute of Mathematics, The Hebrew University of Jerusalem, Jerusalem, 91904, Israel.
Supported in part by the Israel Science Foundation (Grant No.\ 399/16) and in part by the United States-Israel Binational Science Foundation
(Grant No.\ 2014337), Emails: jbreuer@math.huji.ac.il, netanel.levi@mail.huji.ac.il}}
\title{On the Decomposition of the Laplacian on Metric Graphs}
\begin{document}
\sloppy

\maketitle

\begin{abstract}
We study the Laplacian on family preserving metric graphs. These are graphs that have a certain symmetry that, as we show, allows for a
decomposition into a direct sum of one-dimensional operators whose properties are explicitly related to the structure of the graph. Such
decompositions have been extremely useful in the study of Schr\"odinger operators on metric trees. We show that the tree structure is not
essential, and moreover, obtain a direct and simple correspondence between such decompositions in the discrete and continuum case.
\end{abstract}

\section{Introduction}

The study of Schr\"odinger operators on graphs has drawn a considerable amount of attention in the past few decades. So much so, that any
attempt at a short comprehensive review is doomed to fail. We can only refer the reader to a few representative surveys and collections
\cite{BCFK, BK,EKKST, MW}. Besides arising naturally in many physical contexts, the setting of graphs offers a wide array of examples where a
variety of mathematical phenomena related to the effects of geometry on spectral properties may be studied. Of particular recent interest is
the setting of continuum (aka `metric') graphs. In this setting, a graph is seen as a one dimensional simplicial complex, where the line
segments have lengths, and are glued to each other at the relevant vertices. Functions are defined on the line segments, and the operator
studied is the one dimensional Laplacian on the line segments, with some prescribed boundary conditions at the vertices. This is the model
that will be at the focus of our attention here. We shall describe it formally in the next section.

While originating in chemistry and physics in the study of molecules and mesoscopic systems (such as waveguides, see e.g.\ \cite{BK} and
references therein), such continuum models (also known as quantum graphs) have drawn the attention of the spectral theory community and have
served as a platform for the study of various topics. These include trace formulas in quantum chaos \cite{GS}, isospectrality and its
association with geometry \cite{GuS}, Anderson localization and extended states \cite{ASW, ASW1, Br2, HP, Kl, Ta}, Hardy inequalities \cite{EFK1, NS},
eigenvalue estimates \cite{BL, BKKM, EFK} and others \cite{Ca, Ca1, Kuch}.

A useful method in the context of infinite metric trees (i.e.\ connected graphs with no cycles), that has been applied in several of the works
mentioned above, was introduced by Naimark and Solomyak in \cite{NS}. This method requires the tree to be spherically symmetric around a
particular vertex (the root) and involves a decomposition of the operator into a direct sum of one-dimensional operators whose structure is
directly related to the structure of the tree and to the boundary conditions at the vertices. A similar method exists in the discrete case,
where the graph is considered as a combinatorial object and the operator studied is the discrete Laplacian or the adjacency matrix (see, e.g.,
\cite{AF, Br, Br1, BF}). While the similarity between the decomposition in the continuous and discrete case is clear and lies in the
exploitation of the symmetry properties of the graph, it is important to note there are essential technical differences. Whereas the discrete
case involves
studying cyclic subspaces generated by specially chosen functions, the continuum case (as presented in \cite{NS, Sol}) involves defining the
relevant invariant subspaces directly and relies heavily on the structure of the tree.

It has recently been realized in \cite{BrKe} that in the discrete case, the tree structure is not essential for this decomposition. It is in
fact possible to carry out this procedure for a more general class of graphs that we call `family preserving' and whose definition we give below
(see Definition \ref{fpg})\footnote{\cite{BrKe} actually study a slightly more general class of graphs that they call `path commuting', but
as the definition is considerably more involved and less intuitive and since all relevant examples are family preserving we have decided to
prefer here simplicity to generality and restrict our attention to family preserving graphs.}. This class contains radially symmetric trees, antitrees (see Section 5 for the definition), and various other graphs. We refer the reader to \cite{BrKe} for examples and some graphics.

The objective of this work is to extend this decomposition for family preserving graphs to the
continuum case of metric graphs as well. Since, as remarked above, the standard decomposition
technique for metric trees relies heavily on the tree structure, one needs to take a different approach. A natural
approach to this task, and the one that we shall take, is to try and obtain a direct translation of the discrete decomposition to the decomposition
in the metric case. Such an approach has also the added bonus of making explicit the connection between the combinatorial and continuum decompositions, and as we shall show, can recover the original Naimark-Solomyak procedure from the procedure in the discrete case.

While employing the discrete decomposition scheme is indeed natural for getting a decomposition in the metric case, the results in this paper should by no means be viewed as a direct extension of those in \cite{BK}. First, as the functions in the metric case live on edges, whereas those in the discrete case live on vertices, it is a non-trivial task to adapt one procedure to the other. Second, as is evident in Section 4 (where we describe the decomposition algorithm) the discrete decomposition is only one of several steps towards obtaining the one-dimensional constituent operators in the direct sum, and a rather straightforward one at that. Finally, as the class of family preserving graphs is considerably larger than that of spherically symmetric rooted trees, we believe the results here open the door to constructing and studying a wide array of metric graphs with interesting and rich spectral properties.
\begin{rem}
	In the context of decomposing an operator using symmetries, we refer to the preprint \cite{BBJL}, which discusses quotients of finite-dimensional operators with respect to a representation of some symmetry group. Specifically, \cite[Section 6]{BBJL} discusses the case of compact quantum graphs, and the quotient operators are taken with respect to a representation of the underlying graph's symmetry group. It is reasonable to expect that the methods there extend to the infinite dimensional case studied here. A natural question then is whether one can use them in this case to reproduce our results. We note that it seems that our approach is somewhat more direct as it does not involve going through representations of the relevant symmetry group, (which is infinite here). On the other hand, the role of the symmetry is made more explicit in the approach of \cite{BBJL}. It would be interesting to study the relation between these two approaches further.
\end{rem}
The rest of this paper is structured as follows. After presenting some basic definitions, we describe our main abstract result (Theorem \ref{thm_main})
in the next section. As the proof of Theorem \ref{thm_main} relies on a connection between the discrete and continuous case, we devote the
first two subsections of Section 3 to some preliminary results on this connection, presenting the proof in Section 3.3. Although Theorem \ref{thm_main} is the main abstract result, the raison d'\^{e}tre of this paper lies in Section 4, where we describe the structure of the
components in the decomposition and their relation to the structure of the graph. Section 5 presents a demonstration that our approach reduces to the Naimark-Solomyak approach when $\Gamma$ is a tree and an application to the spectral analysis of spherically homogeneous (see Definition \ref{def:SH}) metric antitrees\footnote{The preprint \cite{KN} presents an extensive spectral analysis of spherically homogeneous metric antitrees, also using a decomposition inspired by \cite{BrKe}. Since spherically homogeneous metric antitrees are family preserving, our general setting includes such graphs as particular cases (although we restrict attention to the self-adjoint case). Our emphasis here, however, is on the decomposition method itself and not on the spectral analysis.}.

{\bf Acknowledgments} We thank Rami Band, Gregory Berkolaiko, Aleksey Kostenko and the anonymous referees for useful comments.


\section{Definitions and Statement of the Main Result}

\subsection{Some Definitions}

We begin with some definitions pertaining to the combinatorial structure of a graph. We shall add the continuous structure later.
A rooted graph is a graph $G=\left\langle V(G),E(G)\right\rangle$, with a special
vertex $o\in V(G)$, which is called the root. We assume that $V(G),E(G)$ (the vertex and edge set, respectively)
are infinite, and that for every $v\in V(G)$, $deg(v)<\infty$, where $\deg(v)$ is the degree of $v$(=the number of edges incident to $v$). We
also assume throughout that our graphs are simple (i.e.\ there are no multiple edges or loops) and connected.

For every $v\in V(G)$, we denote the set of edges incident to $v$ by $E_{v}$. A path of length $k$ between two vertices, $v,w \in V(G)$ is a
$k$-tuple of vertices $\left(v=v_1,v_2,v_3\ldots v_k=w \right)$ such that for each $j=1,\ldots, (k-1)$, $(v_j,v_{j+1}) \in E(G)$. The set of
all shortest paths between
two vertices $u,v\in V(G)$ will be denoted as $\langle u,v\rangle$. For $u,v,w\in V(G)$, we write $u\in\langle v,w\rangle$ if $u$ lies
on some shortest path between $v$ and $w$. Given a graph, the relation $u\preceq v\iff u\in\langle o,v\rangle$ induces a partial order on
the vertices, with $o$ being the minimal element. For $v\in V(G)$,
we define $gen(v)$ to be the length (i.e.\ the number of steps) of some shortest path between
$v$ and $o$ and $G_{v}$ to be the graph induced by the set of all vertices comparable with $v$ (with respect to $\preceq$). Namely, $G_v$ is the graph induced by the set of vertices, $w$, such that either $w\preceq v$ or $v\preceq w$. An illustration of some of these definitions is given in Figure \ref{ex_defs_disc}.

\begin{figure}
	\begin{center}
		\begin{tikzpicture}[scale=0.9]
		\vertex(o) at (3,1.5) {};
		\vertex(v) at (3,0) {};
		\draw (2.6,0) node{$v$};
		\vertex(w1) at (2,-1.5) {};
		\draw (1.6,-1.5) node{$w$};
		\vertex(w2) at (4,-1.5) {};
		\vertex(u1) at (1.5,-3) {};
		\draw (1.1,-3) node{$u_1$};
		\vertex(u2) at (2.5,-3) {};
		\draw (2.9,-3) node{$u_2$};
		\vertex(u3) at (3.5,-3) {};
		\vertex(u4) at (4.5,-3) {};
		\vertex(x1) at (2,-4.5) {};
		\vertex(x2) at (4,-4.5) {};
		\vertex(x3) at (2,-5.5) {};
		\vertex(x4) at (4,-5.5) {};
		\node[vertex, draw=none, fill=white](dots1) at (2, -6.5) {};
		\node[vertex, draw=none, fill=white](dots2) at (4, -6.5) {};
		\path (x3) -- (dots1) node [black, font=\Huge, midway, sloped] {$\dots$};
		\path (x4) -- (dots2) node [black, font=\Huge, midway, sloped] {$\dots$};
		\draw (3,1.5) -- (3,0)[dashed];
		\draw (3,0) -- (2, -1.5)[dashed];
		\Edge (v)(w2);
		\draw (2,-1.5) -- (1.5,-3)[dashed];
		\Edge (w1)(u2);
		\Edge (w2)(u3);
		\Edge (w2)(u4);
		\draw (1.5,-3) -- (2,-4.5)[dashed];
		\Edge (u2)(x1);
		\Edge (u3)(x2);
		\Edge(u4)(x2);
		\draw (2,-4.5) -- (2,-5.5)[dashed];
		\Edge (x2)(x4);
		\end{tikzpicture}
	\end{center}
	\captionof{figure}{In the graph above, the dashed edges represent $G_{u_1}$, $E_w=\left\{\left(v,w\right),\left(w,u_1\right),\left(w,u_2\right)\right\},$ $gen(v)=1$, $gen(w)=2$ and $gen(u_1)=gen(u_2)=3$.}
	\label{ex_defs_disc}
\end{figure}
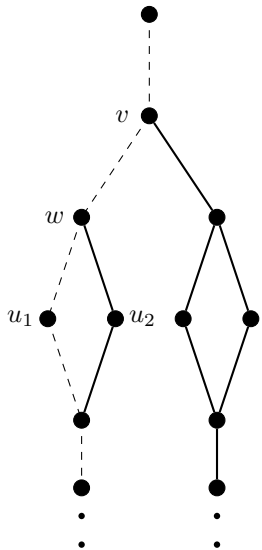

Define $S_{n}=\left\{v\in V(G)\,:\,gen(v)=n\right\}$
(namely, $S_{n}$ is the set of vertices with distance $n$ from the
root). We assume throughout that for any $n$, there are no edges between vertices in $S_n$. With this assumption and the partial order
structure induced on a graph, an edge
$(u,v)=e\in E(G)$ has an initial vertex, namely the vertex closer to $o$, which is denoted by $i(e)$, and a terminal
vertex, denoted by $t(e)$. We also define $gen(e)=gen(i(e))$.

The discrete Laplacian on a graph $G$ is the densely defined operator $\Delta_{d}:\,D(\Delta_{d})\rightarrow\ell^{2}(G)$,
defined by $\Delta_{d}\phi(x)=\underset{x\sim y}{\sum}\left(\phi(x)-\phi(y)\right)$,
where $x\sim y$ means that $x$ and $y$ are neighbors, and \mbox{$D(\Delta_{d})=\left\{\phi\in\ell^{2}(G)\,:\,\Delta\phi\in\ell^{2}(G)\right\}$}.
$D(\Delta_{d})$ is dense because it contains all of the functions
with compact (finite) support.

In order to discuss some symmetry properties of a graph, one would
want to be able to say when any two vertices of the same sphere (i.e.\
two vertices in $S_{n}$) ``look alike''. The definition of a \emph{rooted
graph automorphism} serves that purpose.
\begin{defn}
A \emph{rooted graph automorphism} is a bijection $\tau:\,V(G)\rightarrow V(G)$
such that $\tau(o)=o$, and $\left(\tau(v),\tau(u)\right)\in E\iff\left(v,u\right)\in E$.
\end{defn}

A rooted graph $G=\left\langle V(G),E(G)\right\rangle$ will be called \emph{spherically
symmetric} if for every $n\in\mathbb{N}$ and for every $v,u\in S_{n}$,
there exists a rooted graph automorphism $\tau$, such that $\tau(v)=u$.
In other words, the group of all rooted graph automorphisms on $G$
acts transitively on $S_{n}$.

Two vertices $u,v\in S_{n}$ will be called \emph{forward neighbors} if there
exists some $w\in S_{n+1}$ such that $(v,w),(u,w)\in E$. Similarly,
$u,v\in S_{n}$ will be called \emph{backward neighbors} if there exists
some $w\in S_{n-1}$ such that $\left(v,w\right),\left(u,w\right)\in E$. We can now define the type of symmetry we need.
\begin{defn} \label{fpg}A rooted graph $G$ will be called \emph{family preserving} if
the following conditions hold:

$(i)$ If $u,v\in S_{n}$ are backward neighbors, then there exists
a rooted graph automorphism $\tau$, such that $\tau(u)=v$, and $\tau|_{S_{n-j}}=Id$
for all $n \geq j\geq1$.

$(ii)$ if $u,v\in S_{n}$ are forward neighbors, then there exists
a rooted graph automorphism $\tau$ such that $\tau(u)=v$, and $\tau|_{S_{n+j}}=Id$
for all $j\geq1$.
\end{defn}

\begin{rem}
Family preserving graphs were introduced in \cite{BrKe} in the discrete context as graphs on which it is possible to obtain a constructive
decomposition of the combinatorial Laplacian into operators over cyclic subspaces. Examples are given in Figure \ref{ex_defs_disc} above and in Section 4 below, where we describe the decomposition procedure. We shall also present more examples in Section 5 below. We
refer the reader to \cite{BrKe} for more illustrations and further discussion. We remark here only that in the case that $G$ is a tree then Definition
\ref{fpg} is equivalent to spherical symmetry, whereas for general graphs, this is a strictly stronger property. In Figure \ref{ex_sph_sym} we give an example of a graph which is spherically symmetric but not family preserving.
\end{rem}

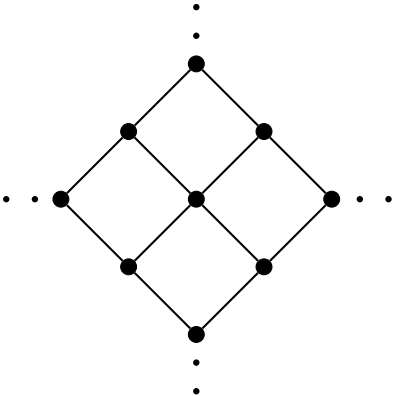
\begin{figure}
	\begin{center}
		\begin{tikzpicture}[scale=0.9]
		\vertex(o) at (0,0) {};
		\vertex(v1) at (1,1) {};
		\vertex (v2) at (-1,1) {};
		\vertex (v3) at (-1,-1) {};
		\vertex (v4) at (1,-1) {};
		\vertex (w1) at (2,0) {};
		\vertex (w2) at (0,2) {};
		\vertex (w3) at (-2,0) {};
		\vertex (w4) at (0,-2) {};
		\node[vertex, draw=none, fill=white](dots1) at (3, 0) {};
		\node[vertex, draw=none, fill=white](dots2) at (0, 3) {};
		\node[vertex, draw=none, fill=white](dots3) at (-2.6,0) {};
		\node[vertex, draw=none, fill=white](dots4) at (0,-3) {};
		\path (w1) -- (dots1) node [black, font=\Huge, midway, sloped] {$\dots$};
		\path (w2) -- (dots2) node [black, font=\Huge, midway, sloped] {$\dots$};
		\path (w3) -- (dots3) node [black, font=\Huge, midway, sloped] {$\dots$};
		\path (w4) -- (dots4) node [black, font=\Huge, midway, sloped] {$\dots$};
		\Edge (o)(v1);
		\Edge (o)(v2);
		\Edge (o)(v3);
		\Edge (o)(v4);
		\Edge (v1)(w1);
		\Edge (v1)(w2);
		\Edge (v2)(w2);
		\Edge (v2)(w3);
		\Edge (v3)(w3);
		\Edge (v3)(w4);
		\Edge (v4)(w4);
		\Edge (v4)(w1);
		\end{tikzpicture}
	\end{center}
	\captionof{figure}{An example of a spherically symmetric graph which is not family preserving. The middle vertex is the root. The dots represent some spherically symmetric continuation (e.g.\ by line segments).}
	\label{ex_sph_sym}
\end{figure}

Having presented the basic definitions we need for the underlying combinatorial structure, we can now add a continuous structure on the edges.
Consider a rooted graph $G$ and identify each edge, $e\in E(G)$, with a non degenerate line segment$\subseteq \mathbb{R}$. That edge is now a
metric space (with the usual metric on $\mathbb{R}$), and in particular its length, denoted $l(e)$, is that of the associated line segment. We assume\footnote{This assumption is made in order to avoid technical issues regarding the definition of the domain of the operator and we make it here for simplicity. The central ideas in our approach do not rely on this assumption.} throughout that $\underset{e\in{E(G)}}\inf l(e)>0$.
This metric structure on the edges together with the gluing at the vertices defines a length for any path on the resulting topological space. This allows one to define a metric $d$ on the graph, under which the distance between two points is the length of a minimal path between them. To avoid confusion with the underlying combinatorial structure, we denote this metric space $\Gamma$ and refer to such a structure as a
\emph{metric graph}. For $x\in\Gamma$ we denote $|x|=d(x,o)$ (note that $x$ here is not necessarily a vertex of $G$, the underlying combinatorial
graph). In the sequel, when we want to refer to the combinatorial structure of a metric graph, $\Gamma$, we shall use the notation $G_\Gamma$
(though we may omit the subscript when there is no risk of confusion). For simplicity, we let $V(\Gamma)= V\left(G_\Gamma \right)$ and
$E(\Gamma)=E\left(G_\Gamma \right)$.

\begin{defn} \label{def:SH}
A rooted metric graph $\Gamma$ will be called spherically homogeneous
if for every $e_{1},e_{2}\in E(\Gamma)$, $gen(e_{1})=gen(e_{2})\Rightarrow l(e_{1})=l(e_{2})$.
\end{defn}

In this work, unless stated otherwise, a metric graph $\Gamma$ will always be spherically homogeneous. For any such graph, the length of an
edge and the distance of a vertex from the root depend solely on their generation. Thus, we denote by $t_k$ the distance of a vertex in $S_k$
from the root. Also, letting $l_k=l(e)$ for some $e$ with $gen(e)=k$ and following \cite{NS,Sol}, we may define the height of $\Gamma$ by
\begin{equation} \nonumber
h(\Gamma)=\sum_{k=0}^\infty l_k
\end{equation}
which may be finite or infinite.

\begin{defn} \label{def:FPM}
A rooted metric graph $\Gamma$ will be called family preserving if it is spherically homogeneous and if $G_\Gamma$ is family preserving.
\end{defn}

The underlying metric structure induces a measurable structure on $\Gamma$. We denote the relevant $\sigma$-algebra by $\mathcal{B}$. The
measure of a set $E\in\mathcal{B}$ is $\underset{e\in E(\Gamma)}{\sum}\lambda(e\cap E)$,
where $\lambda$ is the Lebesgue measure on $e$. We will denote that
measure by $\mu$. $L^{2}(\Gamma)$ is the space of all measurable
functions (with respect to $\mathcal{B}$) $f:\,\Gamma\rightarrow\mathbb{C}$
such that $\int_{\Gamma}|f|^{2}d\mu<\infty$.

Given a metric graph $\Gamma$, we denote by $\mathcal{H}$ the space
$L^{2}(\Gamma)$. Recall that given a domain $\Omega\subseteq\mathbb{R}^{d}$
and $k\in\mathbb{N}$, $H^{k}(\Omega)$ is the space of all functions
$f\in L^{2}(\Omega)$ for which all of the weak derivatives up to
order $k$ are in $L^{2}(\Omega)$.

We define a quadratic form $q:\,Q\times Q\rightarrow\mathbb{C}$ by
\[
Q=\left\{f\in L^{2}(\Gamma)\text{ s.t. \ensuremath{f} is continuous, \ensuremath{f|_{e}\in H^{1}(e)\text{ for every \ensuremath{e\in
E(\Gamma)\text{ \ensuremath{f(o)=0\text{, and \ensuremath{\int_{\Gamma}|f'(x)|^{2}d\mu(x)<\infty}}}}}}}}\right\}
\]
and for $f,g\in Q$, $q(f,g)=\intop_{\Gamma}f'\overline{g'}d\mu$.
The (Dirichlet-Kirchhoff) Laplacian, $\Delta_{c}^{\Gamma}$, on $\Gamma$ is the self adjoint
operator associated with this quadratic form. By \cite[Theorem 1.4.19]{BK}, and under the assumption $\underset{e\in{E(\Gamma)}}\inf l(e)>0$, the domain of $\Delta_{c}^{\Gamma}$
is the space of all functions $f\in L^2(\Gamma)$ such that $f(o)=0$, $f$ is continuous,
$f|_{e}\in H^{2}(e)$ for every $e\in E(\Gamma)$, $\int_{\Gamma}|f''(x)|^{2}dx<\infty$,
and for every $v\in V(\Gamma)$, $\underset{u\sim v}{\sum}\left(f|_{[u,v]}\right)'(v)=0$,
where $[u,v]$ is the line segment connecting $u$ and $v$. These
conditions are called the Kirchhoff boundary conditions \cite{NS}.
When it is clear from the context, we will omit the subscript and
superscript and just write $\Delta$.
\begin{rem}
	Although in this work we only consider the Kirchhoff boundary conditions on the vertices, there are other possible vertex conditions which can be defined in order to make the Laplacian on $\Gamma$ self adjoint (see e.g.\ \cite[Chapter 1]{BK}). As long as these vertex conditions are chosen so as to preserve the underlying symmetry of the operator (i.e., in a spherically symmetric fashion), it is clear that our results, properly modified, extend to that case as well.
\end{rem}

\subsection{The Main Abstract Result}

Our main result is a decomposition of the Laplacian into invariant subspaces described by the structure of the graph. These spaces are
generated by functions arising in the decomposition of the corresponding discrete structure. The discrete decomposition provides
the skeleton for the decomposition in the metric case. Thus, before describing our main result, we need to recall the result in the discrete
case.

Let $G$ be a family preserving graph. Recall that $G_{v}$ set of all vertices comparable with $v$ with respect to the order relation $\preceq$
introduced in the previous subsection. Given $n,j\in\mathbb{N}$, introduce the following operators:

\begin{equation}
E_{n}:\,\ell^{2}(S_{n})\rightarrow\ell^{2}(S_{n+1}),\,E_{n}(\varphi)(v)=\underset{x\in S_{n}\cap G_{v}}{\sum}\varphi(x)=\underset{x \in S_n,
x\sim v} \sum\varphi(x)
\end{equation}

\begin{equation}
\Lambda_{n,+j}:\,\ell^{2}(S_{n})\rightarrow\ell^{2}(S_{n}),\,\Lambda_{n,+j}=E_{n}^{T}\cdots E_{n+j}^{T}\cdot E_{n+j}\cdots E_{n}
\end{equation}

and if $j\leq{n}$, then we may also define

\begin{equation}
\Lambda_{n,-j}:\,\ell^{2}(S_{n})\rightarrow\ell^{2}(S_{n}),\,\Lambda_{n,-j}=E_{n-1}\cdots E_{n-j}\cdot E_{n-j}^{T}\cdots E_{n-1}^{T}
\end{equation}
A decomposition for the Laplacian on family preserving graphs was presented in \cite{BrKe}.
\begin{thm}
\emph{(\cite[Theorem 2.6]{BrKe})} \label{thm_br} Let $G$ be a
family preserving graph. Then $\ell^{2}(G)=\oplus_{r=0}^{\infty}H_{r}$ where $H_r$ is invariant under $\Delta_d$ and
such that:

$(i)$ For every $r$ there exists $n(r)$ and a vector $\phi_{0}^{r}$
such that $supp(\phi_{0}^{r})\subseteq S_{n(r)}$, and
$H_{r}=\overline{span\left\{\phi_{0}^{r},\Delta_{d}\phi_{0}^{r},\Delta_{d}^2\phi_{0}^{r},\ldots\right\}}$.

$(ii)$ The set $\{\phi_{0}^{r},\phi_{1}^{r},\ldots\}$ obtained from
$\{\phi_{0}^{r},\Delta_{d}\phi_{0}^{r},\Delta_{d}^{2}\phi_{0}^{r},\ldots\}$
by applying the Gram-Schmidt process has the property that $supp(\phi_{k}^{r})\subseteq S_{n(r)+k}$
for every $k\geq0$.

Furthermore, for every $r,j\in\mathbb{N}$, $\phi_{0}^{r}$ is an
eigenfunction of $\Lambda_{n(r),\pm{j}}$.
\end{thm}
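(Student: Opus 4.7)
The plan is to construct the spaces $H_r$ by choosing ``seed'' vectors $\phi_0^r$ on each sphere in a way dictated by the family preserving structure, and then showing that the Gram--Schmidt basis of the cyclic subspace $H_r$ automatically has the required support property. The key initial observation is that, since the graph has no edges inside any sphere and the degree is constant on each $S_n$ (both consequences of family preservation), a function $\phi$ supported on $S_n$ satisfies
\[
\Delta_d\phi\big|_{S_{n-1}} = -E_{n-1}^T\phi, \qquad \Delta_d\phi\big|_{S_n} = d_n\phi, \qquad \Delta_d\phi\big|_{S_{n+1}} = -E_n\phi,
\]
where $d_n$ is the common vertex degree on $S_n$. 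Thus $\Delta_d$ shifts support by at most one level and acts by a scalar in the middle. If in addition $\phi_0^r$ is chosen in $N_{n(r)} := \ker E_{n(r)-1}^T \subseteq \ell^2(S_{n(r)})$ (with $N_0 := \ell^2(S_0)$), then the backward piece vanishes, and this choice immediately yields $\Lambda_{n(r),-j}\phi_0^r = 0$ for all $j\geq 1$ since $\Lambda_{n,-j}$ has $E_{n-1}^T$ as its rightmost factor.

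I would then prove by induction on $k$ that Gram--Schmidt produces $\phi_k^r$ as a scalar multiple of $E_{n(r)+k-1}\cdots E_{n(r)}\phi_0^r$, supported on $S_{n(r)+k}$, provided $\phi_0^r$ is a common eigenvector of the family $\{\Lambda_{n,+j}\}_{j\geq 0}$ acting on $N_n$ (with $n = n(r)$). In the inductive step, $\Delta_d\phi_k^r$ has a middle piece $d_{n+k}\phi_k^r$ which is swept out trivially, and a backward piece $-E_{n+k-1}^T\phi_k^r$; to sweep the latter out one must show
\[
E_{n+k-1}^T E_{n+k-1}\cdots E_n\phi_0^r \;=\; \lambda\, E_{n+k-2}\cdots E_n\phi_0^r \quad \text{in } \ell^2(S_{n+k-1})
\]
for some scalar $\lambda$, so that it is proportional to $\phi_{k-1}^r$. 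Compressing by $E_n^T\cdots E_{n+k-2}^T$ reduces this to $\Lambda_{n,+(k-1)}\phi_0^r = \lambda\,\Lambda_{n,+(k-2)}\phi_0^r$, which is immediate from the eigenvector hypothesis and determines $\lambda$; the stronger uncompressed version will follow because the family preserving automorphisms fixing $S_0,\ldots,S_n$ commute with each $E_k$ and act transitively on forward families, forcing the one-dimensional subspace $\mathbb{C}\cdot E_{n+k-2}\cdots E_n\phi_0^r$ to be invariant under $E_{n+k-1}^T E_{n+k-1}$. The only remaining term in $\Delta_d\phi_k^r$ is the forward piece $-E_{n+k}\phi_k^r$, which after normalization is $\phi_{k+1}^r$.

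To assemble the full decomposition, I would exhibit the orthogonal splitting
\[
\ell^2(S_n) = N_n \oplus E_{n-1}N_{n-1} \oplus E_{n-1}E_{n-2}N_{n-2} \oplus \cdots \oplus E_{n-1}\cdots E_0 N_0
\]
for each $n$, choose within each $N_m$ a simultaneous orthonormal eigenbasis $\{\phi_0^r : n(r)=m\}$ of the commuting family $\{\Lambda_{m,+j}\}_{j\geq 0}$, and verify that the summand $E_{n-1}\cdots E_m N_m$ is spanned by $\{\phi_{n-m}^r : n(r)=m\}$. Summing in $n$ and regrouping by $r$ then yields $\ell^2(G) = \bigoplus_r H_r$, with pairwise orthogonality of the $H_r$ inherited from this splitting. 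The main obstacle, I expect, is the joint diagonalization step together with the uncompressed $E^T E$ identity of the previous paragraph: both depend in an essential way on the local automorphisms furnished by Definition \ref{fpg}, and proving that these automorphisms intertwine sufficiently with the $E_k$'s to force the required commutation and rank-one behavior is where the combinatorial content of family preservation is decisive.
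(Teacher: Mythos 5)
First, a point of order: the paper does not prove Theorem \ref{thm_br} at all --- it is imported from \cite[Theorem 2.6]{BrKe} (with the eigenfunction statement extracted from that paper's proof), so there is no in-paper proof to compare against. Your proposal is therefore best judged as a reconstruction of the argument of \cite{BrKe}, and as such it has the right skeleton: the three-block action of $\Delta_d$ on a function supported on a single sphere, seeds chosen in $\ker E_{n-1}^{T}$ as joint eigenvectors of $\{\Lambda_{n,+j}\}_{j}$, the inductive identification of $\phi_k^r$ with a multiple of $E_{n+k-1}\cdots E_n\phi_0^r$, and the level-by-level splitting of $\ell^2(S_n)$. This is consistent with the sketch the paper gives in Section 4.2 and with the supporting lemmas of Section 3.1.

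As a proof, however, the proposal has a genuine gap, and you name it yourself: everything reduces to (a) the claim that $\{\Lambda_{n,+j}\}_{j\ge 0}$ is a commuting family (hence jointly diagonalizable on $N_n$), and (b) the ``uncompressed'' identity that $E_{n+k-1}^{T}E_{n+k-1}$ preserves the line $\mathbb{C}\cdot E_{n+k-2}\cdots E_n\phi_0^r$ --- essentially that $E_{j}^{T}E_{j}$ and $E_{j-1}E_{j-1}^{T}$ commute, which is also what is needed to make your direct sum $N_n\oplus E_{n-1}N_{n-1}\oplus\cdots$ orthogonal (note the $E_j$ are far from injective in general, e.g.\ on antitrees, so neither orthogonality nor non-degeneracy of those summands is automatic). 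Asserting that the automorphisms of Definition \ref{fpg} ``intertwine sufficiently with the $E_k$'s'' is not an argument; the actual mechanism is combinatorial: one shows (as in Lemma \ref{k_for_path} and Corollary \ref{same_num_paths}) that the number of descending/ascending paths between a vertex and the class $G_v\cap S_n$ is constant on that class, whence $\Lambda_{n,+j}\phi$ restricted to such a class equals a fixed positive integer times the sum of $\phi$ over the class (Lemma \ref{lambda_const}); the commutation and rank-one statements follow from this explicit formula, not from an abstract intertwining principle. A further small misattribution: the absence of edges inside spheres is a standing assumption of the paper, not a consequence of Definition \ref{fpg}. Without (a) and (b) actually carried out, what you have is an accurate outline rather than a proof.
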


\begin{rem}
Throughout this work, the notation $\phi_{n}^{r}$ will refer to the
functions presented in Theorem \ref{thm_br}, $(ii)$.
\end{rem}

\begin{rem}
The last statement, about $\phi_0^r$ being an eigenfunction of $\Lambda_{n(r),\pm{j}}$, is not part of the statement of Theorem 2.6 in
\cite{BrKe}.
However, it is shown in its proof. We include it here in the statement for simplicity of reference.
\end{rem}

\begin{rem}
\label{remark_Br} It also follows from the proof of \cite[Theorem 2.6]{BrKe} that for every $k\in\mathbb{N}$, there exists $c\in\mathbb{C}$
such that $\left(\Delta_{d}^{k}(\phi_{0}^{r})\right)|_{S_{n+k}}=c\phi_{k}^{r}$.
\end{rem}

Now let $\Gamma$ be a family preserving metric graph and $G=G_\Gamma$ the associated discrete graph. For $x \in \Gamma$ define $G_{x}$ to be
$G_{i(e)}\cap{G_{t(e)}}$ where $x$ lies on the edge $e$ (recall that $i(e)$ is the initial vertex of $e$, and $t(e)$ is the terminal vertex of $e$).
Note that for any $v \in V(G)$ and $0<t<h(\Gamma)$ the set $G_{v}\cap\left\{x\in\Gamma\,:\,|x|=t\right\}$ is finite. Moreover, if $|u|=|v|$ then, by
symmetry, the size of this set is equal to the size of $G_{u}\cap\left\{x\in\Gamma\,:\,|x|=t\right\}$. For $|v|=n$ we denote this number by $g_{n}(t)$.
Finally, we let $g_\Gamma(t)=g_0(t)= \# \left\{x \in \Gamma \mid |x|=t \right\}$.

The first step in translating the discrete decomposition into a continuous one is defining a procedure to obtain a function on the metric
graph from one defined on the discrete graph. We do this by `spreading' the values of the function over the graph, taking into account the
symmetry. To be precise, given $n\in\mathbb{N}$ and $\phi\in\ell^{2}(S_{n})$, let $h^{\phi}:\,\Gamma\rightarrow\mathbb{C}$
be defined by
\begin{equation} \label{eq:hphi}
h^{\phi}(x)=\frac{1}{\sqrt{g_{n}(|x|)\cdot|G_{x}\cap S_{n}|}}\underset{v\in G_{x}\cap S_{n}}{\sum}\phi(v)
\end{equation}

Next we want to use this procedure to define projection operators based on the functions $\phi_n^r$ above. For any
$f:\,\Gamma\rightarrow\mathbb{C}$, let $f_{t}\in\mathbb{C}^{g_{\Gamma}(t)}$
be the vector whose entries are the values $f$ takes on points with distance $t$
from the root (with respect to some fixed ordering of these points). For $\phi \in \ell^2(S_n)$, let $P_{\phi}:\,L^{2}(\Gamma)\rightarrow
L^{2}(\Gamma)$ be defined
by
\begin{equation}
P_{\phi}(f)(x)=\left \langle f_{|x|},h_{|x|}^{\phi} \right \rangle h^{\phi}(x)
\end{equation}
where $\langle\cdot,\cdot\rangle$ is the standard inner product in
$\mathbb{C}^{g_{\Gamma}(|x|)}$. We shall show in the next section that
for every $r,n\in\mathbb{N}$, $P_{\phi_{n}^{r}}$ is in fact an
orthogonal projection (with image in $L^2(\Gamma)$). We define the space $F_{r,n}$
to be the image of the orthogonal projection $P_{r,n}\coloneqq P_{\phi_{n}^{r}}$.
The decomposition presented in this work will include the image of
$P_{r,0}$ for every $r\in\mathbb{N}$. To abbreviate, we shall denote
$h^{\phi_{n}^{r}}=h^{r,n}$, $h^{r,0}=h^{r}$, $P_{r,0}=P_{r}$, and $F_{r,0}=F_r$.

We want to use the spaces $F_r$ to decompose $\Delta$ on $\Gamma$. There is, however, a `local dimension counting' issue: while functions on
$\Gamma$ are determined by the values they take on edges, functions on $G_\Gamma$ are determined by their values on vertices. If $G$ is a
tree, this is not an issue since the number of edges of a particular generation is always equal to the number of vertices of the next
generation. Generally, however, the local number of vertices and edges does not need to be the same (think of antitrees described in Section
5.2). In order to deal with this problem, in addition to the assumption that there are no edges between edges of the same generation, we need the following

\begin{defn}
A metric graph $\Gamma$ will be called \emph{locally balanced} if for every $n\in\mathbb{N}$,
\[
\begin{array}{c}
\#\left\{e\in E\,:\,gen(e)=n\right\}=\#\left\{v\in V\,:\,gen(v)=n\right\}\\
\text{or}\\
\#\left\{e\in E\,:\,gen(e)=n\right\}=\#\left\{v\in V\,:\,gen(v)=n+1\right\}
\end{array}
\]
\end{defn}

It is intuitively clear that any graph can be made into a locally balanced graph by adding vertices in the middle of `bad' edges. Since such
vertices come with Kirchhoff boundary conditions, this makes no difference as far as $\Delta$ is concerned. Thus

\begin{prop}
\label{u_eq_loc_bal}Let $\Gamma$ be a metric family preserving graph,
then there exists another family preserving graph $\overline{\Gamma}$
such that the pair $\left\langle\Gamma,\Delta_{\Gamma}\right\rangle,\,\left\langle\overline{\Gamma},\Delta_{\overline{\Gamma}}\right\rangle$
are unitarily equivalent, and $\overline{\Gamma}$ is locally balanced.
\end{prop}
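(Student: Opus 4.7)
The plan is to construct $\overline\Gamma$ by subdividing every edge of $\Gamma$ once: for each $e\in E(\Gamma)$, insert a new degree-2 vertex $m_e$ at the metric midpoint of $e$, replacing $e$ by two edges $e^-,e^+$ of length $l(e)/2$ meeting at $m_e$. As metric (and hence measure) spaces, $\overline\Gamma$ and $\Gamma$ are literally equal; only the distinguished vertex set has grown. I would first dispatch the easy structural properties. Since each original edge of generation $n$ has length $l_n$, every midpoint lies at distance $t_n+l_n/2$ from $o$, so the new spheres interleave as $\overline S_{2n}=S_n$ and $\overline S_{2n+1}=\{m_e:gen(e)=n\}$, with all new edges of a common new generation having the same length $l_n/2$; this gives spherical homogeneity. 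Writing $E_n$ for the number of original edges of generation $n$, at new generation $2n$ there are $E_n$ outgoing edges and $|\overline S_{2n+1}|=E_n$ vertices of generation $2n+1$, while at new generation $2n+1$ there are $E_n$ outgoing edges and $|\overline S_{2n+1}|=E_n$ vertices of the same generation; so each new generation satisfies at least one of the clauses of local balance.

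The crux is verifying that $G_{\overline\Gamma}$ is family preserving. Any rooted graph automorphism $\tau$ of $G_\Gamma$ extends to a rooted graph automorphism $\tilde\tau$ of $G_{\overline\Gamma}$ via $\tilde\tau(m_e):=m_{\tau(e)}$, which is well-defined because $\tau$ preserves $E(G_\Gamma)$. At an even new sphere $\overline S_{2n}=S_n$, each midpoint is adjacent to exactly one vertex on each side, so no two distinct vertices of $S_n$ share a forward or backward neighbor in $G_{\overline\Gamma}$, and the forward/backward conditions of Definition \ref{fpg} are vacuous there. At an odd new sphere $\overline S_{2n+1}$, two distinct midpoints $m_{e_1}, m_{e_2}$ are forward neighbors iff $t(e_1)=t(e_2)$, equivalently iff $i(e_1), i(e_2)\in S_n$ are forward neighbors in $G_\Gamma$; symmetrically they are backward neighbors iff $i(e_1)=i(e_2)$, equivalently iff $t(e_1), t(e_2)\in S_{n+1}$ are backward neighbors in $G_\Gamma$. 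In each case Definition \ref{fpg} applied to $G_\Gamma$ supplies a $\tau$ with the right fixed-sphere pattern, and I would check that the extension $\tilde\tau$ maps $m_{e_1}\mapsto m_{e_2}$ and fixes every $\overline S_k$ required by Definition \ref{fpg}. The key observation is that whenever $\tau$ fixes all of $S_0,\ldots,S_k$ pointwise, it fixes every edge with both endpoints in that range, so $\tilde\tau$ automatically fixes every corresponding midpoint; this reconciles the two parities of new spheres with the original fixed-sphere conditions on a case-by-case basis.

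Finally, the unitary equivalence is realized by the identity on $L^2(\Gamma)=L^2(\overline\Gamma)$. The quadratic forms agree edge-by-edge: a function on $e$ is continuous and in $H^1(e)$ iff it is continuous across $m_e$ and in $H^1$ on each half $e^\pm$ (one-dimensional Sobolev trace), so $Q_\Gamma=Q_{\overline\Gamma}$ and $\int|f'|^2 d\mu$ is unchanged. Hence the associated self-adjoint operators coincide. The main obstacle is the family-preserving verification in the second step, where one has to track how forward/backward neighbor relations and fixed-sphere conditions transfer across the insertion of midpoints at each parity; the first and third steps are essentially bookkeeping.
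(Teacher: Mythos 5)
Your proof is correct and follows essentially the same route as the paper: insert midpoint vertices carrying Kirchhoff conditions, extend each rooted automorphism $\tau$ of $G_\Gamma$ by $m_e\mapsto m_{\tau(e)}$ to verify the family-preserving property, and realize the unitary equivalence by the identity on $L^2$. The only difference is that the paper subdivides just the edges of ``bad'' generations (those whose edge count exceeds both adjacent vertex counts) and then checks that no new bad generations appear, whereas you subdivide every edge, which makes the local-balance bookkeeping automatic; both implementations work.
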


A proof is given in the appendix. Now, let $\Gamma$ be a metric family preserving graph and let $\overline{\Gamma}$ be a corresponding locally
balanced family preserving graph. Note that although $\Delta_\Gamma$ is unitarily equivalent to $\Delta_{\overline{\Gamma}}$, the spaces $F_r$
for each one of these graphs may be different.

We are finally ready to state our main result.
\begin{thm} \label{thm_main}
Let $\Gamma$ be a locally balanced family preserving metric graph satisfying $\underset{e\in{E(\Gamma)}} \inf l(e)>0$. Assume further that there are no edges between vertices of the same
generation in $\Gamma$.
Then the subspaces
$(F_{r})_{r\in\mathbb{N}}$ form a decomposition of $L^{2}(\Gamma)$,
which reduces the Laplacian. Meaning:

$(i)$ $L^{2}(\Gamma)=\underset{r}{\oplus}F_{r}$.

$(ii)$ For every $r$, the projection $P_{r}$ onto $F_{r}$ commutes
with the Laplacian, i.e.\ $P_{r}(D(\Delta))\subseteq D(\Delta)$,
and $P_{r}\Delta=\Delta P_{r}$ on $D(\Delta)$.

Furthermore, for every $r\in{\mathbb{N}}$, there exist $a_r\in{\left(0,\infty\right)}$ and $b_r\in{\left(0,\infty\right]}$, which depend on $\phi_{0}^{r}$, such that functions in $F_r$ are supported on $\left\{x\in{\Gamma} : a_r<|x|<b_r\right\}$ and $\Delta|_{F_r}$ is unitarily equivalent to a differential operator on $L^2\left(a_r,b_r\right)$.
\end{thm}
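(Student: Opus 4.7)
The plan is to reduce the entire theorem to the discrete decomposition of Theorem \ref{thm_br} via a fiber-wise analysis. The organizing principle is to view $L^{2}(\Gamma)$ as a measurable field over the radial parameter $t\in(0,h(\Gamma))$ with fibers $\mathbb{C}^{g_{\Gamma}(t)}$; the local balance hypothesis identifies each fiber with $\ell^{2}(S_{n})$ or $\ell^{2}(S_{n+1})$ for $t\in(t_{n},t_{n+1})$, depending on which of the two cases in the definition of locally balanced applies.

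First, I would verify by a direct count using spherical homogeneity that the normalization $\tfrac{1}{\sqrt{g_{n}(|x|)\,|G_{x}\cap S_{n}|}}$ in \eqref{eq:hphi} is calibrated so that $\phi\mapsto h^{\phi}_{t}$ is an isometric inclusion of $\ell^{2}(S_{n})$ into the fiber at $t$ (whenever $t$ lies in the support). In particular $\|h^{r,0}_{t}\|=1$ on $(a_{r},b_{r})$, so $P_{r}$ is an orthogonal projection. The heart of the argument is then to show that, under the fiber identification, $h^{r,0}_{t}$ coincides (up to sign) with $\phi^{r}_{m-n(r)}$, where $m$ is the generation indexing the fiber ($n$ or $n+1$). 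This relies on Remark \ref{remark_Br}, which says $\Delta_{d}^{k}\phi_{0}^{r}|_{S_{n(r)+k}}\propto\phi_{k}^{r}$; the spreading map $\phi\mapsto h^{\phi}$ is essentially an isometric transport of this iteration into the fibered picture.

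Once this identification is in place, parts $(i)$ and $(ii)$ follow essentially for free. As $r$ varies with $n(r)\le m$, the vectors $\phi_{m-n(r)}^{r}$ range over an orthonormal basis of $\ell^{2}(S_{m})$, so $\left\{h^{r,0}_{t}\right\}_{r:\,t\in(a_{r},b_{r})}$ is an orthonormal basis of each fiber, giving both $L^{2}(\Gamma)=\bigoplus_{r}F_{r}$ and $F_{r}\perp F_{s}$. For invariance under $\Delta$, one notes that $g_{n(r)}(|x|)$ and $|G_{x}\cap S_{n(r)}|$ depend only on the generation of the edge containing $x$, so $h^{r,0}$ is locally constant in position along every edge; consequently, for $f=g(|x|)h^{r,0}(x)\in F_{r}\cap D(\Delta)$ one has $\Delta f=-g''(|x|)h^{r,0}(x)\in F_{r}$ on each edge interior.

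For the final clause, the map $U_{r}:\,g\cdot h^{r,0}\mapsto g$ is an isometry from $F_{r}$ onto $L^{2}(a_{r},b_{r})$ (by the norm computation of the first step), and under $U_{r}$ the Laplacian $\Delta|_{F_{r}}$ becomes $-\partial_{t}^{2}$ on the interior of each subinterval $(t_{n},t_{n+1})\cap(a_{r},b_{r})$, together with interface conditions at each $t_{n}$ obtained by translating continuity and Kirchhoff at vertices in $S_{n}$ into matching conditions of the form $g(t_{n}^{+})=c_{n}^{r}\,g(t_{n}^{-})$ and $c_{n}^{r}\,g'(t_{n}^{+})=g'(t_{n}^{-})$, for explicit constants $c_{n}^{r}$ built from local forward/backward branching numbers. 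The main obstacle is the fiber identification in the second step: matching the spread of $\phi_{0}^{r}$ through generations with the discrete Gram--Schmidt vectors $\phi_{k}^{r}$ requires careful tracking of normalization factors, a separate analysis of the two local balance cases, and handling the regions of $t$ above versus below $t_{n(r)}$.
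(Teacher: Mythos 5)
Your overall route is the same as the paper's: the discrete decomposition supplies the skeleton, the spreading map \eqref{eq:hphi} transports it fiber-by-fiber into $L^{2}(\Gamma)$, the identification of $h^{r}_{t}$ with the Gram--Schmidt vectors $\phi^{r}_{m-n(r)}$ (Proposition \ref{lemma_same_space}, resting on Lemmas \ref{lem:310} and \ref{lemma_const}) gives orthogonality and completeness of the fibers, and $U_{r}$ conjugates $\Delta|_{F_{r}}$ to a weighted Laplacian on $(a_{r},b_{r})$. Your completeness argument (orthonormal bases of the fibers $\mathbb{C}^{g_{\Gamma}(t)}$) is a mild repackaging of the paper's dimension count on the spaces $H_{f}$, and the ``main obstacle'' you single out is exactly what Section 3.1 of the paper is devoted to.

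The one genuine gap is in part $(ii)$. The observation that $h^{r}$ is constant on edge interiors, so that $\Delta\left(g(|x|)h^{r}(x)\right)=-g''(|x|)h^{r}(x)$ there, shows only that $\Delta$ preserves $F_{r}\cap D(\Delta)$; it does not show $P_{r}(D(\Delta))\subseteq D(\Delta)$, which is the substantive half of the reduction and does not come ``for free''. For $\varphi\in D(\Delta)$ you must verify that $P_{r}\varphi$ is continuous at every vertex and satisfies the Kirchhoff condition there; this is where the symmetry enters essentially (Corollary \ref{same_num_paths} is what makes $\sum_{e:\,t(e)=v}c_{e}$ a multiple of $\sum_{u\in G_{v}\cap S_{n(r)}}\phi_{0}^{r}(u)$), and it requires a case analysis at the boundary generation $n(r)-1$, where $P_{r}\varphi$ must vanish together with its derivative sum, separate from the generations where $\sum_{u\in G_{w}\cap S_{n(r)}}\phi_{0}^{r}(u)\neq0$ and one invokes $F_{r}=F_{r,k}$ to rewrite $P_{r}=P_{r,k}$ before passing to the limit at the vertex. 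Your outline contains the right computation (the translation of vertex conditions into interface conditions $g(t_{n}^{+})=c_{n}^{r}g(t_{n}^{-})$, etc.), but you deploy it only to describe the domain of the one-dimensional operator; to close the argument you must run it in the other direction, showing that the radial profile $t\mapsto\left\langle\varphi_{t},h^{r}_{t}\right\rangle$ of an arbitrary $\varphi\in D(\Delta)$ satisfies those conditions, which is precisely the content of $(ii)$.
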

\begin{rem} \label{rem_frs}
	The actual strength of Theorem \ref{thm_main} is the simple description of the spaces $\left(F_r\right)_{r\in{\mathbb{N}}}$. By this we mean that the resulting operator on $L^2\left(a_r,b_r\right)$ is the Laplacian with boundary conditions along some set of points. The boundary onditions and the set on which they are defined can be relatively easily described via structural properties of the underlying graph. This, along with the proof of the second part of Theorem \ref{thm_main}, is shown in Section 4.
\end{rem}

\section{Proof of Theorem \ref{thm_main}}

Before proving Theorem \ref{thm_main}, we need some results regarding
the discrete structure of a family preserving graph, and some properties
of the functions presented in Theorem \ref{thm_br}.

\subsection{The Discrete Structure of a Family Preserving Graph}

In order to express the type of symmetry family preserving graphs
have, we will first need some definitions.
\begin{defn}
Let $n,k\in\mathbb{N}$ and let $v,u\in S_{n}$. A \emph{$k$-forward
path} from $v$ to $u$ is a path $(x_{0},\ldots,x_{k},\ldots,x_{2k})$
of length $2k+1$ such that $x_{0}=v$, $x_{2k}=u$, and $gen(x_{j})=n+j$
for all $j\leq k$.

Similarly, a \emph{$k$-backward path} from $v$ to $u$ is a path $(x_{0},\ldots,x_{k},\ldots,x_{2k})$
of length $2k+1$ such that $x_{0}=v$, $x_{2k}=u$, and $gen(x_{j})=n-j$
for all $j\leq k$.
\end{defn}

A $k$-forward path between $x,y\in S_{n}$ is a path that starts
at $x$, takes $k$ steps forward (i.e.\ away from the root), and
then takes $k$ steps backwards and reaches $y$. A $k$-backward path starts at $x$, takes $k$ steps towards the root and then takes $k$ steps
back to $y$.
\begin{defn}
Let $n,k\in\mathbb{N}$ and let $u\in S_{n},\,v\in S_{n+k}$. A \emph{descending
path} between $u$ and $v$ is a path $(x_{0}=u,\ldots,x_{k}=v)$,
where $gen(x_{i})=gen(x_{i-1})+1$ for every $1\leq i\leq k$. An
\emph{ascending path} is a path $(y_{0}=v,\ldots,y_{k}=u)$ where
$gen(y_{i})=gen(y_{i-1})-1$ for every $1\leq i\leq k$.
\end{defn}

We will use the following lemma, proved in \cite{BrKe}.
\begin{lem}(\cite[Lemma 3.4]{BrKe}) \label{k_for_path}Let $G$ be a family preserving graph. Let $x,y\in S_{n}$,
and assume there is a $k$-forward path between $x$ and $y$. Then
there is a rooted graph automorphism $\tau$, such that $\tau(x)=y$,
and $\tau|_{S_{n+k}}=Id$.

Similarly, assume there is a $k$-backward path between $x$ and $y$. Then
there is a rooted graph automorphism $\tau$, such that $\tau(x)=y$,
and $\tau|_{S_{n-k}}=Id$
\end{lem}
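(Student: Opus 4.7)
The plan is to prove the lemma by induction on $k$, treating the forward statement first; the backward statement is completely parallel. The base case $k=1$ is immediate from the definition: a $1$-forward path $(x,x_1,y)$ exhibits $x_1\in S_{n+1}$ as a common forward neighbor of $x$ and $y$, so $x$ and $y$ are forward neighbors in the sense of Definition \ref{fpg}, and condition $(ii)$ supplies an automorphism $\tau$ with $\tau(x)=y$ and $\tau|_{S_{n+j}}=Id$ for all $j\geq 1$, in particular on $S_{n+1}$.

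For the inductive step, suppose the result holds for $k-1$. Given a $k$-forward path $(x_0,\dots,x_k,\dots,x_{2k})$ with $x_0=x$, $x_{2k}=y$, I would first observe that the sub-tuple $(x_1,x_2,\dots,x_{2k-1})$ is, after re-indexing, a $(k-1)$-forward path between $x_1,x_{2k-1}\in S_{n+1}$ whose central vertex is still $x_k\in S_{n+k}=S_{(n+1)+(k-1)}$. The induction hypothesis (applied with $n$ replaced by $n+1$) then yields a rooted graph automorphism $\sigma$ with $\sigma(x_1)=x_{2k-1}$ and $\sigma|_{S_{n+k}}=Id$. Since rooted graph automorphisms preserve generations and adjacency, $\sigma(x_0)\in S_n$ and $\sigma(x_0)\sim \sigma(x_1)=x_{2k-1}$. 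Thus $\sigma(x_0)$ and $y=x_{2k}$ are two vertices of $S_n$ that are both adjacent to $x_{2k-1}\in S_{n+1}$: either they coincide, in which case $\tau:=\sigma$ already satisfies the conclusion, or they are forward neighbors, in which case condition $(ii)$ produces an automorphism $\rho$ with $\rho(\sigma(x_0))=y$ and $\rho|_{S_{n+j}}=Id$ for all $j\geq 1$. Setting $\tau:=\rho\circ\sigma$ then gives $\tau(x_0)=y$ and $\tau|_{S_{n+k}}=Id$, completing the induction.

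For the $k$-backward statement the argument is strictly analogous: one peels off the last step of the path to reduce to a $(k-1)$-backward path between vertices of $S_{n-1}$, applies the induction hypothesis to obtain a $\sigma$ that fixes $S_{n-k}$ pointwise, and then invokes condition $(i)$ of Definition \ref{fpg} instead of $(ii)$ to correct the image of the original endpoint, using the fact that the two candidates $\sigma(x_0),y\in S_n$ share a common backward neighbor and are hence backward neighbors (or equal). The sole technical point requiring care is the bookkeeping verifying that the shifted sub-tuple genuinely meets the formal definition of a $(k-1)$-forward (resp.\ backward) path with central sphere $S_{(n+1)+(k-1)}$ (resp.\ $S_{(n-1)-(k-1)}$) and that the automorphisms $\sigma$ and $\rho$ do compose to give the identity on the required sphere; this is where the fact that conditions $(i)$ and $(ii)$ of Definition \ref{fpg} fix \emph{all} further spheres (not just the adjacent one) is used crucially, and it is essentially the only obstacle, albeit a mild one.
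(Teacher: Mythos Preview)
Your proof is correct. The paper itself does not prove this lemma but simply quotes it from \cite{BrKe}; your induction on $k$---peeling off the outer two steps of the path to get a $(k-1)$-forward path one sphere deeper, applying the induction hypothesis there, and then correcting the remaining endpoint with a single application of Definition~\ref{fpg}$(ii)$---is the natural argument and is essentially what the original reference does.
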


\begin{cor}
\label{same_num_paths}Let $G$ be a family preserving graph. Let
$x,y\in S_{n}$, and assume that there exists a $k$-forward path
between $x$ and $y$. Assume also that $x_{k}=v\in S_{n+k}$. Then
the number of descending paths between $x$ and $v$ is equal to the
number of descending paths between y and v.

Similarly, assume there exists a $k$-backward path between $x$ and $y$ and assume that $x_k=v \in S_{n-k}$. Then the number of ascending paths
between $x$ and $v$ is equal to the number of ascending paths between $y$ and $v$.
\end{cor}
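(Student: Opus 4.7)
The plan is to deduce the corollary directly from Lemma \ref{k_for_path} by pushing descending paths forward under a suitably chosen rooted graph automorphism. First I would observe that any rooted graph automorphism $\tau$ preserves generations: since $\tau$ fixes $o$ and is a bijection of $V(G)$ that preserves the edge relation, it preserves graph distance, so $\tau(S_m)=S_m$ for every $m$. Consequently $\tau$ sends descending paths to descending paths and ascending paths to ascending paths.

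For the forward statement, I would apply Lemma \ref{k_for_path} to the given $k$-forward path between $x$ and $y$, obtaining a rooted graph automorphism $\tau$ with $\tau(x)=y$ and $\tau|_{S_{n+k}}=\mathrm{Id}$. In particular $\tau(v)=v$ since $v\in S_{n+k}$. The map that sends a descending path $(x_0=x,x_1,\ldots,x_k=v)$ to $(\tau(x_0),\tau(x_1),\ldots,\tau(x_k))$ is then a descending path from $y$ to $v$. This map is injective because $\tau$ is a bijection, and it is surjective because $\tau^{-1}$ is also a rooted graph automorphism with the same fixed points (it satisfies $\tau^{-1}(y)=x$ and fixes $v$), so it provides a two-sided inverse by the same construction. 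This bijection yields the desired equality of cardinalities.

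The backward statement is entirely analogous: apply the second half of Lemma \ref{k_for_path} to the $k$-backward path between $x$ and $y$ to obtain $\tau$ with $\tau(x)=y$ and $\tau|_{S_{n-k}}=\mathrm{Id}$, so that $\tau(v)=v$. The same conjugation argument sets up a bijection between ascending paths from $x$ to $v$ and ascending paths from $y$ to $v$. There is no real obstacle here; the content of the corollary is essentially a repackaging of the automorphism supplied by Lemma \ref{k_for_path}, and the only thing one needs to verify carefully is the generation-preservation property noted in the first paragraph, which ensures that the class of descending (resp.\ ascending) paths is invariant under $\tau$.
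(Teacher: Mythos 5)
Your argument is correct and is essentially identical to the paper's proof: both invoke Lemma \ref{k_for_path} to obtain an automorphism $\tau$ with $\tau(x)=y$ fixing $S_{n+k}$ (hence fixing $v$), and use bijectivity of $\tau$ to transport descending paths from $x$ to $v$ onto those from $y$ to $v$. Your additional remarks on generation preservation and on $\tau^{-1}$ supplying the inverse correspondence only make explicit what the paper leaves implicit.
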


\begin{proof}
We will prove the first part. The second one is analogous. Let $\tau$ be a rooted graph automorphism such that $\tau(x)=y$,
and $\tau|_{S_{n+k}}=Id$. Then $\tau$ takes every descending path
between $x$ and $v$ to a descending path between $y$ and $v$.
The fact that $\tau$ is bijective implies that this correspondence
is also bijective.
\end{proof}
\begin{lem}
\label{disj_or_equal}Let $\Gamma$ be a family preserving graph,
and let $n\in\mathbb{N}$. For every \mbox{$k\in \{-n,-n+1,\ldots,0,1,2,\ldots \}$} and $v,u\in S_{n+k}$,
either
\begin{equation} \nonumber
(i)\,(G_{v}\cap S_{n})\bigcap(G_{u}\cap S_{n})=\emptyset
\end{equation}
or
\begin{equation} \nonumber
(ii)\,G_{v}\cap S_{n}=G_{u}\cap S_{n}.
\end{equation}
\end{lem}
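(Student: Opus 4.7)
The plan is to treat Case (ii) as a symmetry statement and reduce it directly to Lemma \ref{k_for_path}. I would split according to the sign of $k$. For $k=0$ the statement is immediate, since $G_v\cap S_n=\{v\}$ and $G_u\cap S_n=\{u\}$, so any nonempty intersection forces $v=u$ and the two sets coincide.

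For $k>0$, generations along $\preceq$-chains must strictly increase, so $G_v\cap S_n=\{w\in S_n\mid w\preceq v\}$ and analogously for $u$. Assuming the two sets are not disjoint, choose $w$ in the intersection. Then $w\preceq v$ and $w\preceq u$ furnish descending paths from $w$ to $v$ and from $w$ to $u$, each of length $k$. Concatenating the reverse of one with the other produces a $k$-backward path from $v$ to $u$: it starts at $v\in S_{n+k}$, steps down through consecutive generations to $w\in S_n$, and climbs back up to $u\in S_{n+k}$. For $k<0$, $v,u\in S_{n+k}$ have smaller generation than $n$ and a common vertex $w\in S_n$ is a common descendant; the same concatenation then produces a $|k|$-forward path between $v$ and $u$.

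In either case Lemma \ref{k_for_path} supplies a rooted graph automorphism $\tau$ with $\tau(v)=u$ and $\tau|_{S_n}=\mathrm{Id}$. Since $\tau$ fixes the root and preserves the edge relation, it preserves the lengths of shortest paths and hence the partial order $\preceq$. Therefore any $w'\in G_v\cap S_n$ satisfies either $w'\preceq v$ or $v\preceq w'$; applying $\tau$ and using $\tau(w')=w'$ translates this into the corresponding relation between $w'$ and $u$, so $w'\in G_u\cap S_n$. Applying the same reasoning to $\tau^{-1}$ yields the reverse inclusion and establishes (ii).

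The main (and essentially only) obstacle is the bookkeeping step of verifying that the concatenation at the common vertex $w$ really produces a path satisfying the generational requirements in the definitions of a $k$-backward and $k$-forward path. This is straightforward but needs care, since it is exactly this structural feature that unlocks Lemma \ref{k_for_path}.
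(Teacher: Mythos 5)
Your proof is correct. You apply the same key ingredient as the paper (Lemma \ref{k_for_path}), but in a ``dual'' way: the paper takes the common vertex $w$ and an arbitrary $x\in G_v\cap S_n$, notes that (for $k>0$) they are joined by a $k$-forward path through $v$, and so obtains an automorphism fixing $S_{n+k}$ pointwise with $\tau(w)=x$; it then transports the descending path from $w$ to $u$ to one from $x$ to $u$. You instead join $v$ and $u$ by a $k$-backward path through $w$ (resp.\ a $|k|$-forward path when $k<0$), obtain a single automorphism fixing $S_n$ pointwise with $\tau(v)=u$, and conclude via the observation that a rooted graph automorphism preserves $\preceq$. Your route is slightly more economical --- one automorphism handles all of $G_v\cap S_n$ at once, and $\tau^{-1}$ gives the reverse inclusion for free --- at the cost of making the order-preservation of automorphisms explicit, which you justify correctly. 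The bookkeeping you flag (that the concatenation at $w$ of the two monotone segments of shortest paths really satisfies the generation conditions in the definition of a $k$-backward/forward path) does go through, since each step of a shortest path from $o$ changes the generation by exactly one.
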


\begin{proof}
Let $v,u\in S_{n+k}$. Suppose $(G_{v}\cap S_{n})\bigcap(G_{u}\cap S_{n})\neq\emptyset$,
and let $w\in(G_{v}\cap S_{n})\bigcap(G_{u}\cap S_{n})$. Let $x\in G_{v}\cap S_{n}$.
If $k>0$ then, by the assumption, there is a $k$-forward path between $x$
and $w$. Thus by Lemma \ref{k_for_path}, there is a rooted graph
automorphism $\tau$ such that $\tau(w)=x$, and $\tau|_{S_{n+k}}=Id$.
There is a descending path $(w=x_0,x_{1},\ldots,x_{k}=u)$, which
translates under $\tau$ to a descending path $(x=\tau(x_{0}),\tau(x_{1}),\ldots,\tau(x_{k})=u)$,
which implies that $x\in G_{u}\cap S_{n}$. Since $x$ was chosen
arbitrarily from $G_{v}\cap S_{n}$, we have that $G_{v}\cap S_{n}\subseteq G_{u}\cap S_{n}$.
It can be similarly proven that $G_{u}\cap S_{n}\subseteq G_{v}\cap S_{n}$,
and we have $G_{v}\cap S_{n}=G_{u}\cap S_{n}$, as required.

If $k \leq 0$ the proof is similar.
\end{proof}
Lemma \ref{disj_or_equal} is the first step in obtaining the orthogonality of the spaces $(F_{r})_{r\in\mathbb{N}}$ as a consequence of the
orthogonality of the functions in the set $\{\phi_{n}^{r}\,:\,r,n\in\mathbb{N}\}$.

\begin{lem} \label{En}
Let $G$ be a family preserving graph, $j,n\in\mathbb{N}$, and $\phi\in\ell^{2}(S_{n})$.
Then $\forall u\in S_{n+j+1}$,
\[
(E_{n+j}\cdots E_{n})(\phi)(u)=\underset{x\in S_{n}\cap G_{u}}{\sum}k_{u}(x)\phi(x)
\]
 where $k_{u}(x)$ is the number of different descending paths from
$x$ to $u$.
\end{lem}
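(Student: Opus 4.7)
My plan is to prove this by induction on $j$. The base case $j=0$ is immediate: for $u \in S_{n+1}$, the definition of $E_n$ gives $E_n(\phi)(u)=\sum_{x \in S_n \cap G_u} \phi(x)$, and since $G$ is simple the unique descending path from each $x \in S_n \cap G_u$ to $u$ is the single edge $(x,u)$, so $k_u(x)=1$ and the claimed formula reduces to the definition of $E_n$.

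For the inductive step, I would assume the formula holds at level $j-1$ and fix $u \in S_{n+j+1}$. Peeling off the outermost factor $E_{n+j}$ and applying the induction hypothesis to each of its summands gives
\begin{equation} \nonumber
(E_{n+j}\cdots E_n)(\phi)(u) \;=\; \sum_{w \in S_{n+j} \cap G_u} \; \sum_{x \in S_n \cap G_w} k_w(x)\, \phi(x).
\end{equation}
The goal is then to interchange the order of summation and collect the $\phi(x)$'s. After the swap, for each $x \in S_n$ the inner coefficient becomes $\sum_w k_w(x)$, where $w$ ranges over those vertices of $S_{n+j}\cap G_u$ satisfying $x \in G_w$. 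The heart of the argument is a combinatorial bijection: every descending path from $x$ to $u$ has length $j+1$ and is uniquely determined by its penultimate vertex $w \in S_{n+j}$ (which automatically lies in $G_u$ and, because $gen(u)=gen(w)+1$, is adjacent to $u$) together with the descending sub-path from $x$ to $w$; conversely, any such pair concatenates to a descending path from $x$ to $u$. Therefore $\sum_w k_w(x) = k_u(x)$, and an $x$ contributes a nonzero term exactly when at least one descending path from $x$ to $u$ exists, which is precisely the condition $x \in S_n \cap G_u$.

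I do not anticipate any serious obstacle, since the statement is purely combinatorial and the family preserving hypothesis is not actually used here (it is inherited from the ambient setting, and will be invoked only later, via Corollary \ref{same_num_paths}, to make the numbers $k_u(x)$ depend only on the generations involved). The only step requiring care is the bookkeeping in the interchange of summations and the verification that the index set of $x$'s collapses to exactly $S_n \cap G_u$, both of which follow from the elementary fact that a descending path from $S_n$ to $S_{n+j+1}$ visits each intermediate sphere exactly once.
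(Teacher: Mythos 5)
Your proof is correct and follows essentially the same route as the paper's: induction on $j$, peeling off the outermost $E_{n+j}$, interchanging the order of summation, and identifying descending paths from $x$ to $u$ with pairs consisting of a penultimate vertex $w\in S_{n+j}$ adjacent to $u$ and a descending path from $x$ to $w$. The paper's version is terser (it leaves the path-counting bijection implicit), so your write-up simply supplies details the paper omits; your observation that the family preserving hypothesis is not needed here is also accurate.
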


\begin{proof}
By induction on $j$. For $j=1$, the claim is trivial. Assume the claim is true for $j=k$. Then for $j=k+1$,
\begin{equation} \nonumber
\begin{split}
(E_{n+k+1}\cdots E_{n})(\phi)(u)=E_{n+k+1}(E_{n+k}\cdots E_{n})(\phi)(u)&=\underset{y\in S_{n+k}\cap G_{u}}{\sum}\underset{x\in S_{n}\cap
G_{y}}{\sum}k_{y}(x)\phi(x) \\
&=\underset{x\in S_{n}\cap G_{u}}{\sum}k_{u}(x)\phi(x).
\end{split}
\end{equation}
\end{proof}
\begin{lem}
\label{EnT}Let $G$ be a family preserving graph, $j,n\in\mathbb{N}$, and $\phi\in\ell^{2}(S_{n+j})$.
Then $\forall v\in S_{n}$,
\[
\left(E_{n}^{T}\cdots E_{n+j}^{T}\right)(\phi)(v)=\underset{x\in S_{n+j}\cap G_{v}}{\sum}k^{v}(x)\phi(x)
\]
where $k^{v}(x)$ is the number of different ascending paths from
$x$ to $v$.
\end{lem}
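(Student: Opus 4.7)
The plan is to prove this by induction on $j$, mimicking the proof of Lemma \ref{En} but with the composition order reversed, or alternatively to deduce it from Lemma \ref{En} by taking the transpose. I will pursue the direct inductive route for consistency with the paper's style, and point out the adjoint shortcut as a sanity check.

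For the base case, one just unpacks the definition of $E_n^T$. Since $(E_n)_{w,x}=1$ iff $x \in S_n$, $w \in S_{n+1}$ and $x \sim w$, transposition gives $(E_n^T \phi)(v)=\sum_{w\in S_{n+1},\,w\sim v}\phi(w)$. Each such $w$ automatically lies in $G_v\cap S_{n+1}$ and is joined to $v$ by exactly one ascending path, so $k^v(w)=1$, verifying the formula at the initial step.

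For the inductive step, assume the formula at level $j$, and write
\[
(E_n^T E_{n+1}^T\cdots E_{n+j+1}^T)(\phi)(v)=E_n^T\bigl((E_{n+1}^T\cdots E_{n+j+1}^T)\phi\bigr)(v).
\]
Applying the base case to the outer $E_n^T$ and then the inductive hypothesis to the inner product gives
\[
\sum_{w\in S_{n+1},\,w\sim v}\;\sum_{x\in S_{n+j+2}\cap G_w} k^w(x)\,\phi(x).
\]
The key combinatorial identity needed to reorganise this double sum is that every ascending path from $x\in S_{n+j+2}$ to $v\in S_n$ has a unique penultimate vertex $w\in S_{n+1}$, which must satisfy $w\sim v$ and $x\in G_w$; conversely, concatenating an ascending path from $x$ to such a $w$ with the edge $(w,v)$ yields an ascending path from $x$ to $v$. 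Thus
\[
k^v(x)=\sum_{w\in S_{n+1},\,w\sim v,\,x\in G_w}k^w(x),
\]
so exchanging the order of summation yields $\sum_{x\in S_{n+j+2}\cap G_v}k^v(x)\phi(x)$, completing the induction. (As a check, one can also obtain the statement directly by transposing Lemma \ref{En}: using $\langle E_n^T\cdots E_{n+j}^T\phi,\delta_v\rangle=\langle\phi,E_{n+j}\cdots E_n\delta_v\rangle$ together with the observation $u\in G_v\iff v\in G_u$ and the symmetry $k_u(v)=k^v(u)$ between descending and ascending path counts.)

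There is no real obstacle here: the argument is the mirror image of Lemma \ref{En}, and the only substantive content is the unique-penultimate-vertex decomposition of ascending paths, which is immediate from the generation-by-generation structure enforced by the assumption that there are no edges between vertices of the same sphere. The family preserving hypothesis is not even needed for this lemma; it will only enter later through Corollary \ref{same_num_paths} when these path counts are shown to depend on $v$ only via $gen(v)$.
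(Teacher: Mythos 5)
Your proof is correct and takes essentially the same approach as the paper: the paper's entire proof of Lemma \ref{EnT} is the single remark that it is symmetric to the proof of Lemma \ref{En}, and your induction on $j$ (base case unpacking $E_n^T$, inductive step via the unique-penultimate-vertex decomposition of ascending paths) is precisely that mirrored argument with the details filled in. Your side remarks, the transpose shortcut and the observation that the family preserving hypothesis is not actually used here, are also accurate.
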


The proof is symmetric to the proof of Lemma \ref{En}.

\begin{rem}
\label{remark_En}It is easy to see that by the definition of $\Delta_{d}$,
given $\phi\in\ell^{2}(S_{n})$, $k\in\mathbb{N}$, for $v\in S_{n+k}$
we have $\Delta_{d}^{k}(\phi)(v)=(E_{n+k-1}\cdots E_{n})(\phi)(v)$.
\end{rem}

The fact that for every $r,k\in\mathbb{N}$, $\phi_{0}^{r}$ is an
eigenvector of $\Lambda_{n(r),+k}$ means that $F_{r}=F_{r,n}$ for
every $n\in\mathbb{N}$. This is proved in the following lemmas, and
is necessary in order to prove that for every family preserving metric
graph $\Gamma$, the spaces $(F_{r})_{r\in\mathbb{N}}$ span $L^{2}(\Gamma)$.
The main idea is that on family preserving graphs, for every $\phi\in\ell^{2}(S_{n})$,
$\Lambda_{n,\pm{k}}(\phi)$ takes the same values on vertices that share
a $k$-forward/backward path. This is shown in the following lemma.

\begin{lem}
\label{lambda_const}Let $G$ be a family preserving graph and let
$v,u\in S_{n}$ such that there exists a $k$-forward path between
$v$ and $u$. Then for every $\phi\in\ell^{2}(S_{n})$, $\Lambda_{n,+k}(\phi)(v)=\Lambda_{n,+k}(\phi)(u)$.
Furthermore, denote by $K$ the set of vertices in $S_{n}$ that share
a $k$-forward path with $v$ (or u). Then $\Lambda_{n,+k}(\phi)|_{K}\equiv c\underset{x\in K}{\sum}\phi(x)$,
where $c$ is some natural number greater than 1, which depends only on $n,k$. The analogous statement for $\Lambda_{n,-k}$ and for
$v,u\in{S_n}$ that have a $k$-backward path between them is also true.
\end{lem}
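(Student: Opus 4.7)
The plan is to expand $\Lambda_{n,+k}(\phi)(v)$ explicitly using Lemmas \ref{En} and \ref{EnT}, and then to exploit the symmetries provided by Lemma \ref{k_for_path}. Writing $\psi=(E_{n+k}\cdots E_n)\phi$ and chaining the two formulas gives a double sum
\[
\Lambda_{n,+k}(\phi)(v)\;=\;\sum_{u\in G_v\cap S_m}k^{v}(u)\sum_{x\in G_u\cap S_n}k_u(x)\,\phi(x),
\]
where $S_m$ denotes the top sphere reached by the iterated $E$'s. Since an ascending path from $u$ to $v$ is just the reverse of a descending path from $v$ to $u$, we have $k^{v}(u)=k_u(v)$, which is the bridge that allows the two lemmas to combine cleanly.

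For the first assertion, let $\tau$ be the rooted automorphism with $\tau(v)=u'$ and $\tau|_{S_m}=Id$ supplied by Lemma \ref{k_for_path} (applied to the given $k$-forward path between $v$ and $u'\in S_n$). Since $\tau$ fixes $S_m$ pointwise, the outer index set $G_v\cap S_m$ equals $G_{u'}\cap S_m$. For each $u$ in this common set, $\tau$ maps $G_u\cap S_n$ bijectively to itself, and Corollary \ref{same_num_paths} gives $k_u(v)=k_u(u')$. Comparing the double sums term by term, the expressions for $\Lambda_{n,+k}(\phi)(v)$ and $\Lambda_{n,+k}(\phi)(u')$ coincide.

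For the closed-form expression, I would swap the order of summation so that the coefficient of $\phi(x)$ becomes $\sum_{u\in G_v\cap G_x\cap S_m}k_u(v)k_u(x)$. A vertex $x\in S_n$ contributes non-trivially precisely when it shares a common level-$m$ descendant with $v$, which is exactly the condition $x\in K$. For such $x$, Lemma \ref{k_for_path} supplies an automorphism $\sigma$ with $\sigma(x)=v$ and $\sigma|_{S_m}=Id$, and Corollary \ref{same_num_paths} then forces $k_u(x)=k_u(v)$ for every relevant $u$. Hence the coefficient collapses to $c(v)\coloneqq\sum_{u\in G_v\cap S_m}k_u(v)^2$, which is the same value for every $x\in K$.

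Finally, since family preservation is strictly stronger than spherical symmetry, any two vertices $v,v'\in S_n$ are related by a rooted automorphism, which bijects $G_v\cap S_m$ with $G_{v'}\cap S_m$ while preserving descending-path counts; hence $c(v)=c(v')$, so $c$ depends only on $n$ and $k$ and is manifestly a positive integer. The $\Lambda_{n,-k}$ case is strictly symmetric under the interchanges $E_j\leftrightarrow E_j^T$ and descending $\leftrightarrow$ ascending paths. The main pitfall is the off-by-one bookkeeping between the $k$-forward-path index and the number of factors appearing in $\Lambda_{n,+k}$; this is handled by carefully tracking precisely which sphere the automorphisms of Lemma \ref{k_for_path} are required to fix in each invocation.
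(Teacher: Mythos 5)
Your proof is correct and follows essentially the same route as the paper: both expand $\Lambda_{n,\pm k}$ via the path-counting formulas of Lemmas \ref{En} and \ref{EnT} and then use the automorphisms of Lemma \ref{k_for_path} together with Corollary \ref{same_num_paths} to equate the relevant path counts (the paper organizes the computation through the intermediate function $\widetilde\phi=E_{n+k}\cdots E_n\phi$ being constant on the common descendant set, whereas you swap the order of summation in the double sum, but this is only a cosmetic difference). Your explicit handling of the fact that $c$ depends only on $n,k$ via spherical symmetry, and your flagging of the off-by-one between the index of $\Lambda_{n,+k}$ and the length of a $k$-forward path (an inconsistency present in the paper itself), are both welcome additions.
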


\begin{proof}
	We prove the lemma for $v,u\in{S_n}$ that share a $k$-forward path. The proof for the case that they share a $k$-backward path is
symmetric.
	
By Lemma \ref{disj_or_equal} we have
\[
G_{v}\cap S_{n+k}=G_{u}\cap S_{n+k}\coloneqq A_{v,u}.
\]
In addition, by the same lemma, for every $w_{1},w_{2}\in A_{v,u}$,
\[
G_{w_{1}}\cap S_{n}=G_{w_{2}}\cap S_{n}\coloneqq B_{v,u}.
\]
Denote $\widetilde{\phi}=E_{n+k}\cdots E_{n}(\text{\ensuremath{\phi})\ensuremath{\in\ell}}^{2}(S_{n+k})$.
By Lemma \ref{En}, for every $w\in A_{v,u}$,
\[
\widetilde{\phi}(w)=\underset{x\in B_{v,u}}{\sum}k_{w}(x)\phi(x).
\]
Now, by Corollary \ref{same_num_paths}, the factor $k_{w}(x)$ in
the sum is constant, so we may write
\begin{equation}
\widetilde{\phi}(w)=c_{1}\underset{x\in B_{v,u}}{\sum}\phi(x)
\end{equation}
where $c_{1}$ is $k_{w}(x)$ for some $x\in B_{v,u}$. Now, again
by Corollary \ref{same_num_paths}, the constant $c_{1}$ is the same
for every $w_{1},w_{2}\in A_{v,u}$, and that means that $\widetilde{\phi}$
is constant on $A_{v,u}$. By definition, we have
\[
\Lambda_{n,+k}(\phi)(v)=E_{n}^{T}\cdots E_{n+k}^{T}\left( \widetilde{\phi} \right)(v)=\underset{x\in
A_{v,u}}{\sum}k^{v}(x)\widetilde{\phi}(x).
\]
We apply Corollary \ref{same_num_paths} twice again, and get that $k^{v}(x)$
is constant on $A_{v,u}$, and that constant is the same for every
$w\in B_{v,u}$, so we have
\[
\underset{x\in A_{v,u}}{\sum}k^{v}(x)\widetilde{\phi}(x)=\underset{x\in A_{v,u}}{\sum}c_{2}\widetilde{\phi}(x)=\underset{x\in
A_{v,u}}{\sum}k^{u}(x)\widetilde{\phi}(x)=E_{n}^{T}\cdots E_{n+k}^{T} \left( \widetilde{\phi} \right)(u)=\Lambda_{n,+k}\left( \phi \right)(u)
\]
which proves the first part of the lemma. The second part follows
by observing that $c_{1},c_{2}$ are natural constants greater than
$0$, and the same procedure can be applied to every vertex that shares
a $k$-forward path with $v$.
\end{proof}

\begin{lem} \label{lem:310}
	\label{comp_lemma}Let $G$ be a family preserving graph, and let
	$r\in\mathbb{N}$. Then for every \mbox{$k\in{\left\{-n(r),-n(r)+1,\ldots,0,1,\ldots\right\}}$} and for every $v\in S_{n(r)+k}$, either
$\underset{x\in G_{v}\cap S_{n(r)}}{\sum}\phi_{0}^{r}(x)=0$
	or $\phi_{0}^{r}|_{G_{v}\cap S_{n(r)}}\equiv c$ for some constant
	$c\in\mathbb{C}$.
\end{lem}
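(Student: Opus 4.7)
The plan is to exploit the fact, stated at the end of Theorem \ref{thm_br}, that $\phi_0^r$ is an eigenvector of $\Lambda_{n(r),\pm j}$ for every $j$, combined with the constancy property established in Lemma \ref{lambda_const}. The key observation is that for $v\in S_{n(r)+k}$, any two vertices $u,w\in G_v\cap S_{n(r)}$ are connected by a short path that passes through $v$: when $k>0$, concatenating the descending path from $u$ to $v$ with the ascending path from $v$ to $w$ gives a $k$-forward path between $u$ and $w$; when $k<0$, concatenating the ascending path from $u$ to $v$ with the descending path from $v$ to $w$ gives a $|k|$-backward path between them.

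Given this, I would argue as follows. Fix $k>0$ and $v\in S_{n(r)+k}$, and let $K=G_v\cap S_{n(r)}$. Every pair in $K$ shares a $k$-forward path by the observation above, so by Lemma \ref{lambda_const} there exists a positive integer constant $c=c(n(r),k)$ with
\[
\Lambda_{n(r),+k}(\phi_0^r)\bigl|_K \equiv c\sum_{x\in K}\phi_0^r(x).
\]
On the other hand, since $\phi_0^r$ is an eigenvector of $\Lambda_{n(r),+k}$ with some eigenvalue $\lambda$, we also have $\Lambda_{n(r),+k}(\phi_0^r)(w)=\lambda\phi_0^r(w)$ for every $w\in K$. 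Comparing the two expressions: if $\lambda=0$ then $c\sum_{x\in K}\phi_0^r(x)=0$, and since $c>0$ this forces the sum to vanish, which is the first alternative; if $\lambda\neq 0$ then $\phi_0^r(w)=(c/\lambda)\sum_{x\in K}\phi_0^r(x)$ is the same constant for every $w\in K$, which is the second alternative.

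The case $k<0$ is handled identically using the backward-path version of Lemma \ref{lambda_const} with the operator $\Lambda_{n(r),-|k|}$, for which $\phi_0^r$ is also an eigenvector by Theorem \ref{thm_br}. The case $k=0$ is trivial, since $v\in S_{n(r)}$ is comparable (with respect to $\preceq$) only with itself inside $S_{n(r)}$, so $G_v\cap S_{n(r)}=\{v\}$ and both alternatives hold vacuously.

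I do not anticipate any real obstacle here: the content of the lemma is essentially an alternative of the form "eigenvector equation $+$ constancy on a set forces either vanishing sum or constancy", and both ingredients are already packaged in Theorem \ref{thm_br} and Lemma \ref{lambda_const}. The only small care required is verifying that any two elements of $G_v\cap S_{n(r)}$ genuinely share a $k$-forward (or backward) path, which follows from the definition of $G_v$ via the partial order $\preceq$.
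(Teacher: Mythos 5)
Your proposal is correct and follows essentially the same route as the paper: both proofs combine the eigenvector property of $\phi_0^r$ with respect to $\Lambda_{n(r),\pm k}$ (from Theorem \ref{thm_br}) with the constancy statement of Lemma \ref{lambda_const}, after identifying $G_v\cap S_{n(r)}$ with the class of vertices sharing a $k$-forward (or backward) path, and then split on whether the eigenvalue vanishes. The only difference is cosmetic: you spell out the path-concatenation argument and the $k=0$ case explicitly, which the paper leaves implicit.
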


\begin{proof}
We prove the lemma for $k\geq0$, the proof for $k<0$ is symmetric. By Theorem \ref{thm_br}, $\phi_{0}^{r}$ is an eigenvector of the
operator $\Lambda_{n,+k}$. Assume $\Lambda_{n,+k} \left( \phi_{0}^{r} \right)=\lambda_{n,+k}\phi_{0}^{r}$
for some $\lambda_{n,+k}\in\mathbb{C}$. Let $x,y\in G_{v}\cap S_{n(r)}$.
The set $G_{v}\cap S_{n(r)}$ is exactly the set of vertices in $S_{n(r)}$
which share a $k$-forward path with $x$ (or with $y$), which
means (by Lemma \ref{lambda_const}) that $\Lambda_{n,+k}(\phi_{0}^{r})$
is constant on that set. Combining that with the fact that $\phi_{0}^{r}$
is an eigenvector of $\Lambda_{n,+k}$, we get that for every $x\in G_{v}\cap S_{n(r)}$,
\begin{equation}
\lambda_{n,+k}\phi_{0}^{r}(x)=\Lambda_{n,+k}(\phi_{0}^{r})(x)=c\underset{y\in G_{v}\cap S_{n(r)}}{\sum}\phi_{0}^{r}(y)
\end{equation}
Where $c$ is a natural number greater than $0$. If $\lambda_{n,+k}=0$,
we get $\underset{y\in G_{v}\cap S_{n(r)}}{\sum}\phi_{0}^{r}(y)=0$.
Otherwise, dividing $(3.4)$ by $\lambda_{n,+k}$ proves the result.
\end{proof}
\begin{lem}
\label{lemma_const}Let $G$ be a family preserving graph, and let
$r,k\in\mathbb{N}$. Assume that $\phi_{k}^{r}\neq0$. Then for every
$v\in S_{n(r)+k}$,  $\phi_{0}^{r}$ is constant on $G_{v}\cap S_{n(r)}$.
\end{lem}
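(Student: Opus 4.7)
The plan is to bootstrap Lemma \ref{comp_lemma}: that lemma yields, for each $v\in S_{n(r)+k}$, the dichotomy that either $\sum_{x\in G_v\cap S_{n(r)}}\phi_0^r(x)=0$ or $\phi_0^r$ is constant on $G_v\cap S_{n(r)}$, and I shall use the hypothesis $\phi_k^r\neq 0$ to force the second alternative for every $v$ simultaneously. More precisely, inspection of the proof of Lemma \ref{comp_lemma} gives the identity
\[
\lambda\,\phi_0^r(x)\;=\;c\sum_{y\in G_v\cap S_{n(r)}}\phi_0^r(y)
\]
for every $v\in S_{n(r)+k}$ and every $x\in G_v\cap S_{n(r)}$, where $\lambda$ is the eigenvalue of $\phi_0^r$ under $\Lambda_{n(r),+k}$ (provided by Theorem \ref{thm_br}) and $c$ is a positive integer depending only on $v$. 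If $\lambda\neq 0$, then the right-hand side is independent of $x$, so $\phi_0^r(x)=(c/\lambda)\sum_y\phi_0^r(y)$ is constant on $G_v\cap S_{n(r)}$, which is exactly the conclusion of the lemma. The whole task thus reduces to establishing $\lambda\neq 0$.

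For this I would use that $\Lambda_{n(r),+k}$ is, by its very definition in Section~2.2, of the form $A^{*}A$, where $A$ is the forward composition of the $E$-operators taking $\ell^2(S_{n(r)})$ into a layer $\ell^2(S_{n(r)+m})$ with $m$ equal to $k$ (under the convention used in Lemma \ref{comp_lemma}). Hence $\lambda\,\|\phi_0^r\|^2=\|A\phi_0^r\|^2$, so $\lambda=0$ is equivalent to $A\phi_0^r=0$. Combining Remark \ref{remark_En} and Remark \ref{remark_Br} identifies $A\phi_0^r$ with $(\Delta_d^k\phi_0^r)\big|_{S_{n(r)+k}}=c'\phi_k^r$ for some scalar $c'\in\mathbb{C}$. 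The key point is that $c'\neq 0$ whenever $\phi_k^r\neq 0$: by Theorem \ref{thm_br}(ii) the Gram--Schmidt vectors $\phi_0^r,\ldots,\phi_{k-1}^r$ are supported in $S_{n(r)},\ldots,S_{n(r)+k-1}$, none of which meet $S_{n(r)+k}$, and under the standing assumption that there are no edges within a single sphere $S_n$ the iterated Laplacian $\Delta_d^j\phi_0^r$ for $j<k$ also cannot reach $S_{n(r)+k}$. Therefore $(\Delta_d^k\phi_0^r)\big|_{S_{n(r)+k}}$ is the only possible source of a nontrivial $S_{n(r)+k}$-component in the Gram--Schmidt expansion leading to $\phi_k^r$, and the nonvanishing of $\phi_k^r$ forces $c'\neq 0$, hence $A\phi_0^r=c'\phi_k^r\neq 0$ and $\lambda\neq 0$.

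The main subtle point I expect is precisely this last step: one has to track carefully through the Gram--Schmidt procedure in the proof of Theorem \ref{thm_br} to conclude $c'\neq 0$, rather than merely that $(\Delta_d^k\phi_0^r)\big|_{S_{n(r)+k}}$ lies in the span of $\phi_0^r,\ldots,\phi_k^r$. Once that point is in hand, the rest of the argument is a rather direct combination of the dichotomy of Lemma \ref{comp_lemma} with the positive semidefiniteness of $\Lambda_{n(r),+k}$.
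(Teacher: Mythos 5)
Your proof is correct and takes essentially the same route as the paper's: both arguments reduce the statement to showing that the eigenvalue $\lambda$ of $\phi_0^r$ under $\Lambda_{n(r),+k}$ is nonzero, and both extract that nonvanishing from the hypothesis $\phi_k^r\neq 0$ via Remarks \ref{remark_Br} and \ref{remark_En} (your care that the constant in $(\Delta_d^k\phi_0^r)|_{S_{n(r)+k}}=c'\phi_k^r$ is nonzero is exactly the implicit content of the paper's first step). The only, arguably cleaner, difference is in execution: you obtain $\lambda\neq 0$ abstractly from the factorization $\Lambda_{n(r),+k}=A^{*}A$, and then handle all classes $G_v\cap S_{n(r)}$ uniformly by dividing the identity $\lambda\,\phi_0^r(x)=c\sum_{y}\phi_0^r(y)$ by $\lambda$, whereas the paper first evaluates $\Lambda_{n(r),+k}\phi_0^r$ at a vertex lying below some $v$ with $\phi_k^r(v)\neq 0$ and then disposes of the remaining classes by a contradiction argument based on the dichotomy of Lemma \ref{comp_lemma}.
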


\begin{proof}
Let $v\in S_{n(r)+k}$ be such that $\phi_{k}^{r}(v)\neq0$. By Remark
\ref{remark_Br}, this means that $\Delta_{d}^{k}(\phi_{0}^{r})(v)\neq0$,
and by Remark \ref{remark_En} and $(3.1)$, this means that $\underset{x\in G_{v}\cap S_{n(r)}}{\sum}\phi_{0}^{r}\neq0$.
By the previous lemma, we get that $\phi_{0}^{r}$ is constant on
$G_{v}\cap S_{n(r)}$. By Theorem \ref{thm_br} and the second part of Lemma \ref{lambda_const},
$\phi_{0}^{r}$ is an eigenvector of $\Lambda_{n,+k}$ with an eigenvalue
$\lambda_{n,+k}\neq0$. Let $w\in S_{n(r)+k}$ be such that $G_{w}\cap S_{n(r)}\neq G_{v}\cap S_{n(r)}$.
Assume that $\phi_{0}^{r}$ is not constant on $G_{w}\cap S_{n(r)}$,
and let $x\in G_{w}\cap S_{n(r)}$ be such that $\phi_{0}^{r}(x)\neq0$.
By Lemma \ref{comp_lemma}, $\underset{y\in G_{w}\cap S_{n(r)}}{\sum}\phi_{0}^{r}(y)=0$.
By the second part of Lemma \ref{lambda_const}, $\Lambda_{n,+k}(\phi_{0}^{r})(x)=c\underset{y\in G_{w}\cap S_{n(r)}}{\sum}\phi_{0}^{r}(y)=0$.
On the other hand, $\Lambda_{n,+k}(\phi_{0}^{r})(x)=\lambda_{n,+k}\phi_{0}^{r}(x)\neq0$,
which is a contradiction. Thus we have that $\phi_{0}^{r}$ is constant
on $G_{w}\cap S_{n(r)}$, as required.
\end{proof}
\begin{prop}
\label{lemma_same_space}Let $G$ be a family preserving graph and
$r,k\in\mathbb{N}$ such that $\phi_{k}^{r}\neq0$. then $F_{r}=F_{r,k}$.
\end{prop}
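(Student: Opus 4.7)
To prove $F_r = F_{r,k}$, my plan is to show that the two spreading functions satisfy the pointwise identity $h^{\phi_k^r}(x) = \gamma(|x|)\, h^{\phi_0^r}(x)$ for a scalar function $\gamma$ depending only on $|x|$, and that $\gamma$ is nonzero on the common support of the two functions. Since by construction $F_{r,m}$ restricted to each radius $t$ is the line spanned by $h^{\phi_m^r}_t$, such pointwise proportionality immediately yields equality of the two subspaces.

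To derive the proportionality, I would first rewrite $\phi_k^r$ explicitly in terms of $\phi_0^r$. By Remarks \ref{remark_Br} and \ref{remark_En} combined with Lemma \ref{En},
\[
\phi_k^r(v) = c^{-1}\sum_{w \in G_v \cap S_{n(r)}} k_v(w)\,\phi_0^r(w), \qquad v \in S_{n(r)+k},
\]
where $c$ is the constant from Remark \ref{remark_Br}. Corollary \ref{same_num_paths} shows that $k_v(w)$ is constant in $w$ on $G_v \cap S_{n(r)}$, and since $\phi_k^r \neq 0$, Lemma \ref{lem:310} (Lemma \ref{lemma_const}) gives that $\phi_0^r$ is also constant on this set; I denote its value there by $\alpha_v$. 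Because family preserving graphs are spherically symmetric, $k_v$ and $|G_v \cap S_{n(r)}|$ depend only on $gen(v) = n(r)+k$, so that we obtain the clean identity $\phi_k^r(v) = C\,\alpha_v$ for a single constant $C = C(n(r),k,r)$.

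Substituting this into \eqref{eq:hphi}, the remaining task is to relate
\[
\sum_{v \in G_x \cap S_{n(r)+k}} \alpha_v \qquad \text{and} \qquad \sum_{w \in G_x \cap S_{n(r)}} \phi_0^r(w).
\]
Using Lemma \ref{disj_or_equal} to group the vertices in $S_{n(r)}$ by the equivalence classes of fibers $\{G_v \cap S_{n(r)}\}_{v \in S_{n(r)+k}}$ (on each of which $\phi_0^r \equiv \alpha_{[v]}$), the comparison reduces to showing that for every class $[v]$ the ratio $|[v]\cap G_x\cap S_{n(r)+k}|/|G_{[v]}\cap S_{n(r)}\cap G_x|$ depends only on $|x|$, not on $[v]$. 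I would establish this via the rooted graph automorphisms produced by Lemma \ref{k_for_path}, applied at both generations $n(r)$ and $n(r)+k$, which allow one to move between different fibers while respecting the combinatorial position of $x$ on the relevant spheres. Non-vanishing of $\gamma$ on the joint support is then inherited from $\phi_k^r \neq 0$ by Lemma \ref{lemma_const}.

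The main obstacle I anticipate is precisely this combinatorial step. In general $G_x \cap S_{n(r)}$ is not a union of fibers, because a fiber $G_v \cap S_{n(r)}$ with $v$ below $x$ can contain vertices lying outside $G_x$; one sees this already in a small antitree with $|S_1|=2$ and $|S_2|=1$, where the single fiber $\{a_1,a_2\}$ strictly contains $G_{a_1}\cap S_1=\{a_1\}$. Controlling this overflow and showing that the intersection ratios stabilize to a quantity depending only on $|x|$ is where the family preserving symmetry needs to be exploited most carefully.
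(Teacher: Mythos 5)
Your derivation of the key identity $\phi_k^r(v)=C\,\alpha_v$ is precisely the paper's proof: Remark \ref{remark_Br} together with Remark \ref{remark_En} and Lemma \ref{En} to write $\phi_k^r(v)$ as a path-weighted sum of $\phi_0^r$ over the fiber $G_v\cap S_{n(r)}$, Corollary \ref{same_num_paths} to make $k_v(\cdot)$ constant there, constancy of $\phi_0^r$ on the fiber, and spherical symmetry to make all constants depend only on the generation. (One small correction: the constancy statement you need is Lemma \ref{lemma_const}, not Lemma \ref{lem:310} alone --- the latter only gives a dichotomy, and Lemma \ref{lemma_const} is exactly where the hypothesis $\phi_k^r\neq 0$ is used; you also implicitly need $c\neq 0$ in Remark \ref{remark_Br}, which again follows from $\phi_k^r\neq 0$.)

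Where you diverge is only in how much of the ``sufficiency'' you try to make explicit. The paper simply asserts that the identity $\phi_0^r|_{G_v\cap S_{n(r)}}\equiv c\,\phi_k^r(v)$ suffices and stops, so the obstacle you flag is a step the paper elides rather than one it resolves differently. It does close with the tools you name: for $x$ on an edge $e$ of generation $m$, any two vertices of $G_x\cap S_{n(r)+k}$ are joined by a forward path (when $m\geq n(r)+k$) or a backward path (when $m<n(r)+k$) turning at an endpoint of $e$, so Lemma \ref{k_for_path} supplies automorphisms fixing that endpoint; these permute $G_x\cap S_{n(r)+k}$ and $G_x\cap S_{n(r)}$ while carrying any fiber class onto any other, and combined with Lemma \ref{disj_or_equal} this makes both the multiplicity $\#\{w\in G_x\cap S_{n(r)+k}\,:\,G_w\cap S_{n(r)}=F\}$ and the overlap $|F\cap G_x\cap S_{n(r)}|$ independent of $F$. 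Both numerators in \eqref{eq:hphi} then reduce to a common multiplicity times $\sum_F\alpha_F$, giving $h^{\phi_k^r}=\gamma(|x|)h^{\phi_0^r}$ with \emph{coinciding} vanishing sets --- note you need the two supports to be equal, not merely $\gamma\neq 0$ on their intersection. (This is essentially the same counting that reappears in the proof of Theorem \ref{thm_main}$(ii)$ and in Section 4.) So: same approach and correct in outline; the one step you leave open is genuinely needed but closable, and is also left implicit in the paper.
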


\begin{proof}
It is sufficient to show that there exists $c\in\mathbb{C}$ such
that for every $v\in S_{n(r)+k}$, $\phi_{0}^{r}|_{G_{v}\cap S_{n(r)}}\equiv c\phi_{k}^{r}(v)$,
and by Remark \ref{remark_Br} we may examine $\Delta_{d}^{k}(\phi_{0}^{r})$.
By Lemma \ref{lemma_const}, for every $v\in S_{n(r)+k}$, $\phi_{0}^{r}$
is constant on $G_{v}\cap S_{n(r)}$. Furthermore, by $(3.2)$ and
remark \ref{remark_En}, $\Delta_{d}^{k}(\phi_{0}^{r})(v)=c\underset{x\in G_{v}\cap S_{n(r)}}{\sum}\phi_{0}^{r}(v)$,
where $c$ is the number of descending paths from some $x\in G_{v}\cap S_{n(r)}$
to $v$. By spherical symmetry of $G$, for every $w\in S_{n(r)+k}$,
the number of descending paths from some $x\in G_{w}\cap S_{n(r)}$
is exactly $c$, and we get the result.
\end{proof}
%


\subsection{Some Results Regarding the Projections $(P_{n,r})_{n,r\in\mathbb{N}}$}

With the lemmas presented we are almost ready to prove Theorem \ref{thm_main}.
As a final preliminary we show that for every $r\in\mathbb{N}$, the transformation
$P_{r}$ defined by $(2.3)$ is indeed an orthogonal projection.
\begin{lem}
Let $\Gamma$ be a family preserving metric graph and let $r,k\in\mathbb{N}$.
Then for every $0 \leq t <h(\Gamma)$, either $||h_{t}^{r,k}||_{\mathbb{C}^{g_{\Gamma}(t)}}=1$
or $h_{t}^{r,k}=0$.
\end{lem}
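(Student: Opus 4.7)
The plan is to compute $\|h_t^{r,k}\|^2$ directly from definition (2.1) and obtain the $0$-or-$1$ dichotomy from the eigenfunction property of $\phi_0^r$ under the operators $\Lambda_{n(r),\pm j}$. Fix $t$ and let $m$ be the generation of the edges containing the points at distance $t$ (so $t_m\le t<t_{m+1}$); write $n=n(r)+k$. For any such point $x$, the set $G_x\cap S_n$ reduces to $G_v\cap S_n$ where $v$ is the appropriate endpoint in $S_m$ of the edge of $x$ (the initial endpoint if $m\ge n$, the terminal one if $m<n$). By Lemma \ref{disj_or_equal}, as $x$ varies these sets form equivalence classes $A_1,\ldots,A_p$ partitioning $S_n$; by the family preserving symmetry they all have common cardinality $b$, and each $A_i$ is realized by exactly $g_n(t)$ points $x$. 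Plugging in and regrouping, the normalization telescopes to
\begin{equation*}
\|h_t^{r,k}\|^2=\frac{1}{b}\sum_{i=1}^{p}\left|\sum_{v\in A_i}\phi_k^r(v)\right|^2.
\end{equation*}

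The heart of the proof is then to establish an alternative that is uniform in $i$: either $\sum_{v\in A_i}\phi_k^r(v)=0$ for every $i$, or $\phi_k^r$ is constant on every $A_i$. I would handle the forward case $m\ge n$ first (the backward case is symmetric via Lemma \ref{EnT}). Using Remark \ref{remark_Br} to write $\phi_k^r$ as a scalar multiple of $\Delta_d^k\phi_0^r|_{S_n}$, then unfolding with Lemma \ref{En} and using Corollary \ref{same_num_paths} to equate the descending-path multiplicities $k_w(x)$ across $w\in S_n$, one obtains an identity of the form
\begin{equation*}
\sum_{v\in A_i}\phi_k^r(v)=C\sum_{x\in G_{v_i}\cap S_{n(r)}}\phi_0^r(x),
\end{equation*}
where $v_i\in S_m$ is the vertex defining $A_i$ and $C$ is a constant independent of $i$. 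Since $\phi_0^r$ is an eigenfunction of $\Lambda_{n(r),+(m-n(r))}$ by Theorem \ref{thm_br}, Lemma \ref{comp_lemma} supplies the uniform dichotomy: either the eigenvalue is zero and all the right-hand sums vanish, or the eigenvalue is nonzero and $\phi_0^r$ is constant on each $G_{v_i}\cap S_{n(r)}$. In the latter case, a further application of Lemma \ref{En} (using that $|G_w\cap S_{n(r)}|$ depends only on $w$'s generation) shows that $\phi_k^r$ is itself constant on each $A_i$.

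The computation is then immediate. In the first alternative $\|h_t^{r,k}\|^2=0$. In the second, writing $c_i$ for the constant value of $\phi_k^r$ on $A_i$,
\begin{equation*}
\|h_t^{r,k}\|^2=\frac{1}{b}\sum_i b^2|c_i|^2=\sum_i b|c_i|^2=\sum_{v\in S_n}|\phi_k^r(v)|^2=\|\phi_k^r\|^2=1,
\end{equation*}
since Gram-Schmidt normalizes $\phi_k^r$ to be a unit vector. The main obstacle is making the constant $C$ above genuinely independent of $i$: this requires careful bookkeeping with the path-count multiplicities $k_w(x)$ appearing in Lemma \ref{En} and repeated appeal to Corollary \ref{same_num_paths} to equate them across different equivalence classes. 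The case $m<n$ is handled by the symmetric arguments using Lemma \ref{EnT} and the backward $\Lambda_{n(r),-(n(r)-m)}$ alternative in Lemma \ref{comp_lemma}; the degenerate case $m=n$ reduces to $A_i=\{v_i\}$ singletons, giving $\|h_t^{r,k}\|^2=\|\phi_k^r\|^2=1$ trivially.
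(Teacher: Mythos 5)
Your proposal is correct and follows essentially the same route as the paper: the same regrouping of the points at distance $t$ into the classes $G_x\cap S_{n(r)+k}$ (each realized by $g_{n(r)+k}(t)$ points), the same dichotomy (all class-sums vanish, or $\phi_k^r$ is constant on each class), and the same telescoping of the normalization back to $\|\phi_k^r\|_{\ell^2}=1$. The only difference is that you re-derive the constancy dichotomy from Lemma \ref{En}, Corollary \ref{same_num_paths} and Lemma \ref{comp_lemma}, whereas the paper simply invokes Lemma \ref{lemma_const}, which already packages exactly that statement.
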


\begin{proof}
Let $0\leq t < h(\Gamma)$. Assume that $h_{t}^{r,k}\neq0$. Then there exists some
$x\in\Gamma$ with $|x|=t$ and $\underset{v\in G_{x}\cap S_{n(r)+k}}{\sum}\phi_{0}^{r}(v)\neq0$.
This means, by Lemma \ref{lemma_const}, that $\phi_{0}^{r}$ is constant
on $G_{y}\cap S_{n(r)}$ for every $y\in\Gamma$ with $|y|=t$. This implies that $\phi_{k}^{r}$ is constant on $G_y\cap{S_{n(r)+k}}$. Now,
we have
\begin{align*}
\left\langle h_{t}^{r,k},h_{t}^{r,k}\right\rangle & =\underset{|x|=t}{\sum}h^{r,k}(x)\overline{h^{r,k}(x)}\\
 & =\underset{|x|=t}{\sum}\frac{1}{g_{n(r)+k}(t)\cdot|G_{x}\cap S_{n(r)+k}|} \left(\underset{v\in G_{x}\cap
 S_{n(r)+k}}{\sum}\phi_{k}^{r}(v) \right)\overline{\left(\underset{u\in G_{x}\cap S_{n(r)+k}}{\sum}\phi_{k}^{r}(u) \right)}=
\end{align*}
\begin{equation}
=\frac{1}{g_{n(r)+k}(t)}\underset{|x|=t}{\sum}|G_{x}\cap S_{n(r)+k}|\phi_{k}^{r}(v_{x})\overline{\phi_{k}^{r}(v_{x})}
\end{equation}

where $v_{x}$ is some representative of $G_{x}\cap S_{n(r)+k}$. Now,
note that
\[
1=\langle\phi_{k}^{r},\phi_{k}^{r}\rangle_{\ell^{2}}^2=\underset{v\in S_{n(r)+k}}{\sum}\phi_{k}^{r}(v)\overline{\phi_{k}^{r}(v)}
\]
and if we divide $S_{n(r)+k}$ to the classes of $G_{x}\cap S_{n(r)+k}$ for
$x$ with $|x|=t$, say $S_{n(r)+k}=\underset{i=1}{\overset{k}{\bigsqcup}}G_{x_{i}}\cap S_{n(r)+k}$,
then
\begin{equation}
\underset{v\in S_{n(r)+k}}{\sum}\phi_{k}^{r}(v)\overline{\phi_{k}^{r}(v)}=\underset{i=1}{\overset{k}{\sum}}|G_{x_{i}}\cap
S_{n(r)+k}|\phi_{k}^{r}(v_{x_{i}})\overline{\phi_{k}^{r}(v_{x_{i}})}
\end{equation}
Now, each summand in the right hand side of $(3.4)$ appears $g_{n(r)+k}(t)$
times in $(3.3)$, and we have
\[
\left\langle h_{t}^{r,k},h_{t}^{r,k}\right\rangle_{\mathbb{C}^{g_{\Gamma}(t)}}=\left\langle\phi_{k}^{r},\phi_{k}^{r}\right\rangle_{\ell^{2}}=1
\]
as required.
\end{proof}
\begin{lem}\label{lemma_ort_proj}
For every $r,k\in{\mathbb{N}}$, the image of $L^2(\Gamma)$ under $P_{r,k}$ (as defined by $(2.5)$) is contained in $L^2(\Gamma)$. Furthermore, $P_{r,k}$ is an orthogonal projection.
\end{lem}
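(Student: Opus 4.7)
The plan is to exploit the fact that $P_{r,k}$ acts \emph{fiberwise} with respect to the radial decomposition of $\Gamma$: for each $t \in (0,h(\Gamma))$, restrict to the finite fiber $\Gamma_t = \{x \in \Gamma : |x| = t\} \cong \mathbb{C}^{g_\Gamma(t)}$, and on this fiber, $P_{r,k}$ reduces to the rank-one operator $v \mapsto \langle v, h^{r,k}_t\rangle\, h^{r,k}_t$. By the preceding lemma, $h^{r,k}_t$ is either a unit vector or zero, so in each fiber this is textbook orthogonal projection onto (or annihilation of) the span of $h^{r,k}_t$. All the desired properties of $P_{r,k}$ should then follow from the corresponding fiberwise statements, once one has a disintegration formula for the $L^2$ norm.

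The key preliminary step is to prove the disintegration formula
\begin{equation} \nonumber
\int_\Gamma |f(x)|^2\, d\mu(x) = \int_0^{h(\Gamma)} \|f_t\|^2_{\mathbb{C}^{g_\Gamma(t)}}\, dt, \qquad f \in L^2(\Gamma),
\end{equation}
together with its bilinear analog. This follows from spherical homogeneity: the edges of generation $n$ all have length $l_n$ and are naturally parametrized by $t \in (t_n, t_{n+1})$. Summing the definition $\int_\Gamma = \sum_{e \in E(\Gamma)} \int_e\, d\lambda$ first over edges of a fixed generation and then over generations, and noting that for $t \in (t_n,t_{n+1})$ the fiber $\Gamma_t$ contains exactly one point from each generation-$n$ edge, rewrites the left-hand side as $\sum_n \int_{t_n}^{t_{n+1}} \sum_{x \in \Gamma_t} |f(x)|^2\, dt$, which is the right-hand side.

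With this formula in hand, the rest is routine. For containment in $L^2$: since $(P_{r,k}f)(x) = \langle f_{|x|}, h^{r,k}_{|x|}\rangle h^{r,k}(x)$ and the scalar factor depends only on $|x|$, the disintegration formula yields
\begin{equation} \nonumber
\|P_{r,k}f\|^2 = \int_0^{h(\Gamma)} |\langle f_t, h^{r,k}_t\rangle|^2\,\|h^{r,k}_t\|^2\, dt \le \int_0^{h(\Gamma)} \|f_t\|^2\, dt = \|f\|^2,
\end{equation}
using Cauchy--Schwarz and $\|h^{r,k}_t\| \le 1$. For idempotency, note that $(P_{r,k}f)_t = \langle f_t, h^{r,k}_t\rangle h^{r,k}_t$ as a vector in $\mathbb{C}^{g_\Gamma(t)}$, so $\langle (P_{r,k}f)_t, h^{r,k}_t\rangle = \langle f_t, h^{r,k}_t\rangle \|h^{r,k}_t\|^2$; this equals $\langle f_t, h^{r,k}_t\rangle$ when $\|h^{r,k}_t\|=1$ and is $0$ otherwise—in both cases $P_{r,k}(P_{r,k}f) = P_{r,k}f$ pointwise. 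For self-adjointness, the bilinear version of the disintegration formula gives both $\langle P_{r,k}f, g\rangle$ and $\langle f, P_{r,k}g\rangle$ as the single integral $\int_0^{h(\Gamma)} \langle f_t, h^{r,k}_t\rangle \overline{\langle g_t, h^{r,k}_t\rangle}\, dt$.

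The only conceptual obstacle is the disintegration formula; once it is established, the orthogonal-projection property reduces cleanly to a statement about rank-one projections on each fiber, where the work has already been done in the previous lemma. I do not expect any measurability issues since $h^{r,k}$ is defined explicitly as a piecewise-constant-along-fibers function built from finitely supported data.
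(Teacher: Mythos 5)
Your proposal is correct and follows essentially the same route as the paper: the paper likewise bounds $\|P_{r,k}(f)_t\|$ fiberwise to get containment in $L^2(\Gamma)$, verifies idempotency pointwise using $\|h^{r,k}_t\|=1$, and proves self-adjointness by disintegrating the $L^2(\Gamma)$ inner product as $\int_0^{h(\Gamma)}\sum_{|x|=t}\cdots\,dt$ and reducing to the rank-one projection on each fiber. The only differences are cosmetic: you make the disintegration formula and the $h^{r,k}_t=0$ case explicit, and you check $\langle P_{r,k}f,g\rangle=\langle f,P_{r,k}g\rangle$ directly where the paper checks $\langle P_{r,k}f,f-P_{r,k}f\rangle=0$.
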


\begin{proof}
The first part of the lemma follows from the fact that for every $0 \leq t <h(\Gamma)$
we have $\left \| P_{r,k}(f)_{t} \right \|_{\mathbb{C}^{g_{\Gamma}(t)}}\leq \left \| f_{t} \right \|_{\mathbb{C}^{g_{\Gamma}(t)}}$.
Additionally, for every $x\in\Gamma$ we have
\[
P_{r,k}(P_{r,k}(f))(x)=\left \langle \left \langle f_{|x|},h_{|x|}^{r,k} \right \rangle h_{|x|}^{r,k},h_{|x|}^{r,k} \right \rangle
h^{r,k}(x)=\left \langle
f_{|x|},h_{|x|}^{r,k}\right \rangle\underset{=1}{\underbrace{\left \langle h_{|x|}^{r,k},h_{|x|}^{r,k} \right
\rangle}}h^{r,k}(x)=P_{r,k}(f)(x)
\]
thus $P_{r,k}$ is indeed a projection. Finally, we need to show that
$P_{r,k}$ is self adjoint. Let $f\in L^{2}(\Gamma)$. We need to
show that $\left \langle P_{\phi}(f),f-P_{\phi}(f) \right \rangle_{L^2(\Gamma)}=0$. Indeed
\begin{align*}
\left \langle P_{r,k}(f),f-P_{r,k}(f) \right \rangle & =\int_{\Gamma}P_{r,k}(f)(x)\overline{\left(f-P_{r,k}(f)(x) \right)}d\mu(x)\\
 & =\int_{0}^{h(\Gamma)}\underset{|x|=t}{\sum}P_{r,k}(f)(x)\overline{\left(f-P_{r,k}(f)(x)\right)}dt\\
 & =\int_{0}^{h(\Gamma)}\underset{|x|=t}{\sum}\left \langle f_{t},h_{t}^{r,k} \right \rangle h^{r,k}(x)\overline{\left(f(x)-\left \langle
 f_{t},h_{t}^{r,k}\right \rangle
 h^{r,k}(x)\right )}dt\\
 & =\int_{0}^{h(\Gamma)}\langle\langle f_{t},h_{t}^{r,k}\rangle h_{t}^{r,k},f_{t}-\langle f_{t},h_{t}^{r,k}\rangle h_{t}^{r,k}\rangle dt\\
 & =\int_{0}^{h(\Gamma)}0dt=0
\end{align*}
where the next to last equality follows since $||h_{t}^{r,k}||_{\mathbb{C}^{g_{\Gamma}(t)}}=1$.
\end{proof}
%


\subsection{Proof of Theorem \ref{thm_main}}
\begin{proof}
Let $\Gamma$ be a locally balanced family preserving metric graph.

$(i)$
We first prove orthogonality. Note that for every $t\geq0$, $\left( P_{r}(f) \right)_{t}$
is the orthogonal projection of $f_{t}$ to the span of $h_{t}^{r}$.
In addition, for every $r_{1}\neq r_{2}$, the orthogonality of $\phi_{0}^{r_{1}}$
and $\phi_{0}^{r_{2}}$ implies that $h_{t}^{r_{1}}$ and $h_{t}^{r_{2}}$
are also orthogonal in $\mathbb{C}^{g_{\Gamma}(t)}$. So given $f\in F_{r_{1}}$,
$f=P_{r_{1}}(f)$, $g\in F_{r_{2}}$, $g=P_{r_{2}}(g)$, we have that
$\underset{|x|=t}{\sum}P_{r_{1}}(f)(x)\overline{P_{r_{2}}(g)(x)}=0$
which implies that $\langle f,g\rangle_{L^{2}}=0$, as required.

Now, let $f\in L^{2}(\Gamma)$ such that $supp(f)\subseteq e=(u,v)$,
$gen(e)=n$. For every $e'\in E(\Gamma)$ such that $gen(e')=gen(e)$,
define $f_{e'}$ on $e'$ to be the translation of $f$ to the edge
$e'$. That is, for $x$ on $e'$, $f_{e'}(x)=f(y)$ where $y\in e$ such that $|y|=|x|$.

Define $H_{f}$ to be the subspace of $L^{2}(\Gamma)$ spanned
by the set $\{f_{e'}\,:\,gen(e')=gen(e)\}$. Since $\Gamma$ is locally balanced, either
$$
\#\{e\in E\,:\,gen(e)=n\}=\#\{v\in V\,:\,gen(v)=n\}
$$
or
$$
\#\{e\in E\,:\,gen(e)=n\}=\#\{v\in V\,:\,gen(v)=n+1\}.
$$
Assume first that $\#\{e\in E\,:\,gen(e)=n\}=\#\{v\in V\,:\,gen(v)=n\}$ and let
\mbox{$\{\phi_{1},\ldots,\phi_{|S_{n}|}\}\subseteq\{\phi_{j}^{r}\,:\,j,r\in\mathbb{N}\}$}
be a basis of $\ell^{2}(S_{n})$. For every $1\leq i\leq|S_{n}|$,
define $g^{i}\in H_{f}$ in the following way:
\[
g^{i}(x)=f_{e_x}(x)\phi_{i}(v_{x})
\]
where $e_{x}$ is the edge containing $x$, and $v_{x}=i(e_x)$ is the initial vertex of $e_x$. In other words,
$$
g^{i}=\sum_{gen(e')=n}\phi_{i}(i(e)) f_{e'},
$$
so clearly, $g^i \in H_f$. Now, we have (note that if $gen(e')=n$ then $|G_x\cap S_n|=1$)
\begin{align*}
P_{\phi_{i}}(g^{i})(x) & =\left \langle g_{|x|}^{i},h_{|x|}^{\phi_{i}} \right \rangle h^{\phi_{i}}(x)=\\
 & =\frac{1}{g_n(|x|)}\left(\underset{|y|=|x|}{\sum}f_{e_{y}}(y)\phi_{i}(v_y)\overline{\phi_{i}(v_y)} \right)\phi_{i}(v_{x})=\\
 & =f_{e_x}(x)\left( \underset{v\in S_{n}}{\sum}\phi_{i}(v)\overline{\phi_{i}(v)} \right)\phi_{i}(v_{x})=\\
 & =f_{e_x}(x)\phi^{i}(v_{x})=g^{i}(x)
\end{align*}
where the third equality follows from the fact that $|x|=|y|\Rightarrow f_{e_x}(x)=f_{e_y}(y)$,
and the fourth from the fact that $||\phi_{i}||_{\ell^{2}}=1$. This implies that $g^i \in \textrm{Im} \left(P_{\phi^i} \right)$
and so that $g^i \in {\oplus}F_{r}$.
Moreover, by the independence of the $\phi^i$, it is clear that the functions $\{g^{1},\ldots,g^{|S_{n}|}\}$ are linearly
independent. Thus, by dimension considerations it follows that $H_{f}$ is spanned by $\{g^{1},\ldots,g^{|S_{n}|}\}$. This implies that $H_f
\subseteq \oplus F_r$, and in particular $f\in\underset{r\in\mathbb{N}}{\oplus}F_{r}$. Since linear combinations of such $f$'s are dense in
$L^2(\Gamma)$ we obtain $L^2(\Gamma)=\underset{r\in\mathbb{N}}{\oplus}F_{r}$ as required.

The proof in the case that $\#\{e\in E\,:\,gen(e)=n\}=\#\{v\in V\,:\,gen(v)=n+1\}$ is similar.

$(ii)$ First, since differentiation is a local action and $h^{r}$
is constant on edges, it is easy to see that $\Delta P_{r}=P_{r}\Delta$
on $D(\Delta)$. Moreover, for the same reason differentiability properties of $\varphi$ on the edges are unchanged by $P_r$.
Therefore to show that $P_r(D(\Delta))\subseteq D(\Delta)$ we only need to check the gluing conditions at the vertices. Thus let
$\varphi\in{D(\Delta)}$, $f=P_r(\varphi)$, and let $v\in S_{n(r)+k}$, where $k\in\{-n(r)+1,-n(r),\ldots,0,1,\ldots\}$. We divide into cases.

Assume first that $\underset{u\in{G_w\cap{S_{n(r)}}}}{\sum}\phi_{0}^{r}(u)=0$ for every $w\in{S_{n(r)+k}}$. In that case $k\neq0$. We treat
the case that $k\geq1$. The proof for $k\leq-1$ is symmetric. Note that for $x\in G_v$ such that $|x|>|v|$, ${G_v\cap{S_{n(r)}}}\subseteq
G_x\cap{S_{n(r)}}$. Thus, by Lemma \ref{lem:310} and the definition of $P_r$, $f$ vanishes on the set $\left\{x\in{\Gamma}\ |\ |x|\geq|v|\right\}$.

In order to check the gluing conditions, we first check continuity at $v$.
We need to verify that $\underset{x\nearrow v}{lim}f(x)=0$. For convenience,
we denote $g_{n(r)}(|x|)$  by $g_{n(r)}$, and $|G_{x}\cap S_{n(r)}|$ by
$w_{n(r)}$ since they are constant along segments between vertices.
\begin{align*}
\underset{x\nearrow v}{lim}f(x) & =\underset{x\nearrow v}{lim}\left(\underset{|y|=|x|}{\sum}\varphi(y)\cdot\frac{1}{\sqrt{g_{n(r)}\cdot
w_{n(r)}}}\underset{w\in G_{y}\cap S_{n(r)}}{\sum}\phi_{0}^{r}(w)\right)\frac{1}{\sqrt{g_{n(r)}\cdot w_{n(r)}}}\underset{u\in G_{x}\cap
S_{n(r)}}{\sum}\phi_{0}^{r}(u)=\\
& =\underset{x\nearrow v}{lim}\frac{1}{g_{n(r)}\cdot w_{n(r)}}\left(\underset{|y|=|x|}{\sum}\varphi(y)\underset{w\in G_{y}\cap
S_{n(r)}}{\sum}\phi_{0}^{r}(w)\right)\underset{u\in G_{x}\cap S_{n(r)}}{\sum}\phi_{0}^{r}(u)=\\
& =\underset{x\nearrow v}{lim}\frac{1}{g_{n(r)}\cdot w_{n(r)}}\left(\underset{s\in S_{n(r)+k}}{\sum}\underset{|y|=|x|}{\underset{y\in
G_{s}}{\sum}}\varphi(y)\underset{w\in G_{y}\cap S_{n(r)}}{\sum}\phi_{0}^{r}(w)\right)\underset{u\in G_{x}\cap
S_{n(r)}}{\sum}\phi_{0}^{r}(u)=\\
& =\underset{x\nearrow v}{lim}\frac{1}{g_{n(r)}\cdot w_{n(r)}}\left(\underset{s\in S_{n(r)+k}}{\sum}\varphi(s)\underset{=0}{\underbrace{\underset{|y|=|x|}{\underset{y\in
G_{s}}{\sum}}\underset{w\in G_{y}\cap S_{n(r)}}{\sum}\phi_{0}^{r}(w)}}\right)\underset{u\in G_{x}\cap S_{n(r)}}{\sum}\phi_{0}^{r}(u)=\\
& =0
\end{align*}
where the next to last equality follows from continuity of $\varphi$, and
the rest follow from changing the order of summation. As for the derivative
matching condition, let $e\in{E(\Gamma)}$ such that $t(e)=v$. Note that $h^r$ is constant on $e$, so for $x(e)\in{e}$ we denote
$h^r(x(e))=c_e$.
Now, for every $x\in{e}$ $(f|_e)(x)=c_e\left< \varphi_{|x|},h_{|x|}^{r} \right>$. Thus, we may write
\[
\underset{e:t(e)=v}{\sum}(f|_{e})'(v)=\left(\left\langle \varphi_{|v|},h_{|v|}^{r} \right \rangle
\right)'\cdot\sum_{e:t(e)=v}c_e
\]
Now note that
\[
\sum_{e:t(e)=v}c_e=0
\]
as it is some multiple of $\underset{u\in G_{v}\cap S_{n(r)}}{\sum}\phi_{0}^{r}(u)$.
This is a consequence of the fact that $G_v\cap{S_n(r)}=\underset{e:t(e)=v}{\bigcup}G_{x(e)}\cap{S_n(r)}$ and for every
$e_1,e_2\in{E(\Gamma)})$ such that $t(e_1)=t(e_2)=v$,
\begin{align*}
 |\left\{e:G_{x(e)}\cap{S_{n(r)}}=G_{x(e_1)}\cap{S_{n(r)}}, t(e)=v\right\}|=|\left\{e:G_{x(e)}\cap{S_{n(r)}}=G_{x(e_2)}\cap{S_{n)r}},
 t(e)=v\right\}|
 \end{align*}
 which in turn follows from Corollary \ref{same_num_paths}.
Thus, by our assumption, the above sum is $0$ as required.

We now treat the case in which there exists some $w\in{S_{n(r)+k}}$ for which $\underset{u\in{G_w\cap{S_{n(r)}}}}{\sum}\phi_{0}^{r}(u)\neq0$.
This is impossible for $k<0$ since $H_r$ is orthogonal to $\ell^2(S_n)$ for every $n<n(r)$, and the above sum is a
multiple of $\left< \Delta_d^{|k|}\phi_{0}^{r},\delta_w \right>$. Thus $k\geq0$.
The fact that the above sum is not $0$ implies that $\phi_{k}^{r}\neq0$. Thus, by Lemma \ref{lemma_same_space}, $F_r=F_{r,k}$. By the
uniqueness of the orthogonal projection and by Lemma \ref{lemma_ort_proj}, we may write
$f=P_{r,k}(\varphi)$.
For sufficiently small $\epsilon>0$, for every $x\in\Gamma$
with $|x|=|v|\pm\epsilon$ it holds that $|G_x\cap{S_{n(r)+k}}|=1$, thus
\[
h^{\phi_{k}^{r}}(x)=\frac{1}{\sqrt{g_{n(r)+k}(|x|)}}\phi_{k}^{r}(v_{x})
\]
where $v_x \in S_{n(r)+k}$ and $x$ lies on the edge $e$ for which either $i(e)=v_{x}$ or $t(e)=v_{x}$.
Thus, for $v\in S_{n(r)+k}$ and $x\in\Gamma$ which lies on an edge
$(u,v)\in E(\Gamma)$, we have $v_x=v$ and so
\begin{align*}
f(x) &
=P_{r,k}(\varphi)(x)=\left(\underset{|y|=|x|}{\sum}\varphi(y)\frac{1}{\sqrt{g_{n(r)+k}(|y|)}}\phi_{k}^{r}(v_{y})\right)\frac{1}{\sqrt{g_{n(r)+k}(|x|)}}\phi_{k}^{r}(v)=\\
& =\left(\underset{w\in S_{n(r)+k}}{\sum}\phi_{k}^{r}(w)\underset{|y|=|x|, y\in
G_{w}}{\sum}\frac{\varphi(y)}{g_{n(r)+k}(|y|)}\right)\phi_{k}^{r}(v)
\end{align*}
Now, when $x$ tends to $v$, $\varphi(y)$ tends to $\varphi(w)$ and so
\[
\underset{|y|=|x|, y\in
G_{w}}{\sum}\frac{\varphi(y)}{g_{n(r)+k}(|y|)}\rightarrow \varphi(w).
\]
It follows that
\[
P_{r,k}(\varphi)(x)\underset{x\rightarrow{v}}{\longrightarrow}\phi_{k}^{r}(v)\underset{w\in S_{n(r)+k}}{\sum}\phi_{k}^{r}(w)\varphi(w)
\]
which means that $f$ is continuous.

We now prove that the derivative matching
condition holds. In order to do so, we divide the edges of which $v$
is a part into two sets. The set of edges which terminate in $v$
will be denoted as $E_{t}$, and the set of edges emanating from $v$
	will be denoted as $E_{s}$. We also denote $g_{n(r)+k}(|x|)\coloneqq b_{n(r)+k}^{in}$
for $|u|<|x|<|v|$ with $u\in{S_{n(r)+k-1})}$ (note that $b_{n(r)+k}^{in}$ is the number of edges terminating in $v$), and $g_{n(r)+k}(|x|)\coloneqq b_{n(r)+k}^{out}$ for
$|u|>|x|>|v|$ with $u\in{S_{n(r)+k+1}}$ ($b_{n(r)+k}^{out}$ is the number of edges emanating from $v$). Finally, for every edge $e\in{E(\Gamma)}$ denote by $x(e)$ the selection of some point $x\in{e}$. Now, we calculate the sum
$\underset{u\sim v}{\sum}(P_{r,k}(\varphi)|_{(u,v)})(x(u,v))$:
\begin{align*}
\underset{u\sim v}{\sum}\left(P_{r,k}(\varphi)|_{(u,v)}\right)\left(x(u,v)\right) & =\underset{e\in
E_{t}}{\sum}\left(P_{r,k}(\varphi)|_{e}\right)\left(x(e)\right)+\underset{e\in
E_{s}}{\sum}\left(P_{r,k}(\varphi)|_{e}\right)\left(x(e)\right)=\\
& =\underset{e\in E_{t}}{\sum}\left(\underset{w\in S_{n(r)+k}}{\sum}\underset{|y|=|x(e)|}{\underset{y\in
G_{w}}{\sum}}\frac{1}{\sqrt{b_{n(r)+k}^{in}}}\phi_{k}^{r}(w)\varphi(y)\right)\frac{1}{\sqrt{b_{n(r)+k}^{in}}}\phi_{k}^{r}(v)+\\
& +\underset{e\in E_{s}}{\sum}\left(\underset{w\in S_{n(r)+k}}{\sum}\underset{|y|=|x(e)|}{\underset{y\in
G_{w}}{\sum}}\frac{1}{\sqrt{b_{n(r)+k}^{out}}}\phi_{k}^{r}(w)\varphi(y)\right)\frac{1}{\sqrt{b_{n(r)+k}^{out}}}\phi_{k}^{r}(v)
\end{align*}

Note that if we pick $|x(e_{1})|=|x(e_{2})|=t_{1}$ for every $e_{1},e_{2}\in E_{t}$
and $|x(e_{3})|=|x(e_{4})|=t_{2}$ for every $e_{3},e_{4}\in E_{s}$,
then we may write the above sum as
\begin{align*}
& b_{n(r)+k}^{in}\left(\underset{w\in S_{n(r)+k}}{\sum}\underset{|y|=t_{1}}{\underset{y\in
G_{w}}{\sum}}\frac{1}{\sqrt{b_{n(r)+k}^{in}}}\phi_{k}^{r}(w)\varphi(y)\right)\frac{1}{\sqrt{b_{n(r)+k}^{in}}}\phi_{k}^{r}(v)\\
& \quad + b_{n(r)+k}^{out}\left(\underset{w\in S_{n(r)+k}}{\sum}\underset{|y|=t_{2}}{\underset{y\in
G_{w}}{\sum}}\frac{1}{\sqrt{b_{n(r)+k}^{out}}}\phi_{k}^{r}(w)\varphi(y)\right)\frac{1}{\sqrt{b_{n(r)+k}^{out}}}\phi_{k}^{r}(v)=\\
& =\left(\underset{w\in S_{n(r)+k}}{\sum}\phi_{k}^{r}(w)\underset{|y|=t_{1}}{\underset{y\in G_{w}}{\sum}}\varphi(y)\right)\phi_{k}^{r}(v)+\\
& \quad +\left(\underset{w\in S_{n(r)+k}}{\sum}\phi_{k}^{r}(w)\underset{|y|=t_{2}}{\underset{y\in
G_{w}}{\sum}}\varphi(y)\right)\phi_{k}^{r}(v) \\
& =\phi_{k}^{r}(v)\underset{w\in S_{n(r)+k}}{\sum}\phi_{k}^{r}(w)\left(\underset{|y|=t_{1}}{\underset{y\in
G_{w}}{\sum}}\varphi(y)+\underset{|y|=t_{2}}{\underset{y\in G_{w}}{\sum}}\varphi(y)\right)
\end{align*}
Since $\varphi \in D(\Delta)$, by taking the limit $|t_1|$ and $|t_2|$ to $|v|$ and differentiating, we see that the derivative matching
condition holds.
\end{proof}
The proof of The proof of the second part of Theorem \ref{thm_main}, along with a description of the spaces $\left(F_r\right)_{r\in{\mathbb{N}}}$, is given in Section 4. second part of Theorem \ref{thm_main} is given in Section 4.


\section{Decomposing $\Delta$}
Let $\Gamma$ be a family preserving metric graph. In this section, we describe an algorithm with which one can produce the one dimensional components of the Laplacian on $\Gamma$. We will also demonstrate each step on an example, denoted by $\widetilde{\Gamma}$ and presented in Figure \ref{ex_present}, which will accompany us throughout the section. The analysis of two more examples (trees and antitrees) is given in Section 5.
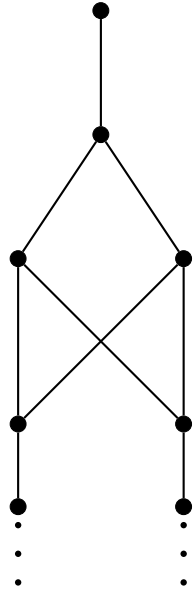
\begin{figure}
	\begin{center}
		\begin{tikzpicture}[scale=1.1]
		\vertex(o) at (3,1.5) {};
		\vertex(v) at (3,0) {};
		\vertex(w1) at (2,-1.5) {};
		\vertex(w2) at (4,-1.5) {};
		\vertex(x1) at (2,-3.5) {};
		\vertex(x2) at (4, -3.5) {};
		\vertex(x3) at (2, -4.5) {};
		\vertex(x4) at (4, -4.5) {};
		\node[vertex, draw=none, fill=white](dots1) at (2, -5.5) {};
		\node[vertex, draw=none, fill=white](dots2) at (4, -5.5) {};
		\path (x3) -- (dots1) node [black, font=\Huge, midway, sloped] {$\dots$};
		\path (x4) -- (dots2) node [black, font=\Huge, midway, sloped] {$\dots$};
		\Edge(o)(v)
		\Edge(v)(w1)
		\Edge(v)(w2)
		\Edge(w1)(x1)
		\Edge(w1)(x2)
		\Edge(w2)(x1)
		\Edge(w2)(x2)
		\Edge(x2)(x4)
		\Edge(x1)(x3);
		\end{tikzpicture}
	\end{center}
\captionof{figure}{The graph $\widetilde{\Gamma}$. The dots signify some family preserving continuation of the graph. Note that the vertices are denoted by black discs, so there is no vertex where the drawings of the edges intersect.}
\label{ex_present}
\end{figure}

\subsection{Step 0 - Generating the equivalent locally balanced graph}
In the case that $\Gamma$ is not locally balanced, one should turn it into a locally balanced graph by applying the following procedure: For every $n\in{\mathbb{N}}$, check whether $\#\left\{e\in{E(\Gamma)}\,|\, gen(e)=n\right\}=|S_{n-1}|$ or $\#\left\{e\in{E(\Gamma)}\,|\, gen(e)=n\right\}=|S_{n}|$. If not, add dummy vertices on all of the edges of generation $n$. The vertices are all located at an equal distance from $S_{n-1}$, and kirchhoff boundary conditions are attached to them. This process is illustrated in Figure \ref{ex_loc_bal}. In the appendix we show that this procedure can be carried out to any family preserving metric graph, and that the resulting operator is unitarily equivalent to the original one.
\begin{figure}
	\begin{center}
		\begin{tikzpicture}[scale=1.1]
		\vertex(o) at (3,1.5) {};
		\vertex(v) at (3,0) {};
		\vertex(w1) at (2,-1.5) {};
		\vertex(w2) at (4,-1.5) {};
		\vertex(x1) at (2,-3.5) {};
		\vertex(x2) at (4, -3.5) {};
		\vertex(x3) at (2, -4.5) {};
		\vertex(x4) at (4, -4.5) {};
		\node[vertex, draw=none, fill=white](dots1) at (2, -5.5) {};
		\node[vertex, draw=none, fill=white](dots2) at (4, -5.5) {};
		\path (x3) -- (dots1) node [black, font=\Huge, midway, sloped] {$\dots$};
		\path (x4) -- (dots2) node [black, font=\Huge, midway, sloped] {$\dots$};
		\Edge(o)(v)
		\Edge(v)(w1)
		\Edge(v)(w2)
		\Edge(w1)(x1)
		\Edge(w1)(x2)
		\Edge(w2)(x1)
		\Edge(w2)(x2)
		\Edge(x2)(x4)
		\Edge(x1)(x3);
		\draw[->] (5, -2.4) -- (7, -2.4);
		\vertex(o') at (9, 1.5) {};
		\draw (9.3, 1.5) node{$o$};
		\vertex(v') at (9, 0) {};
		\draw (9.3, 0) node{$v$};
		\vertex(w1') at (8, -1.5) {};
		\draw (7.7, -1.5) node{$w_1$};
		\vertex(w2') at (10, -1.5){};
		\draw (10.3, -1.5) node{$w_2$};
		\vertex(u1') at (8, -2.1666) {};
		\draw (7.7, -2.1666) node{$u_1$};
		\vertex(u2') at (8.6666, -2.1666) {};
		\draw (8.3666, -2.1666) node{$u_2$};
		\vertex(u3') at (9.3333, -2.1666) {};
		\draw (9.6333, -2.1666) node{$u_3$};
		\vertex(u4') at (10, -2.1666) {};
		\draw (10.3, -2.1666) node{$u_4$};
		\vertex(x1') at (8, -3.5) {};
		\vertex(x2') at (10, -3.5) {};
		\vertex(x3') at (8, -4.5) {};
		\vertex(x4') at (10, -4.5) {};
		\node[vertex, draw=none, fill=white](dots1') at (8, -5.5) {};
		\node[vertex, draw=none, fill=white](dots2') at (10, -5.5) {};
		\path (x3') -- (dots1') node [black, font=\Huge, midway, sloped] {$\dots$};
		\path (x4') -- (dots2') node [black, font=\Huge, midway, sloped] {$\dots$};
		\Edge(o')(v')
		\Edge(v')(w1')
		\Edge(v')(w2')
		\Edge(w1')(u1')
		\Edge(w1')(u2')
		\Edge(w2')(u3')
		\Edge(w2')(u4')
		\Edge(u1')(x1')
		\Edge(u2')(x2')
		\Edge(u3')(x1')
		\Edge(u4')(x2')
		\Edge(x1')(x3')
		\Edge(x2')(x4');
		\end{tikzpicture}
	\end{center}
\captionof{figure}{Transforming $\widetilde{\Gamma}$ into a locally balanced graph.}
\label{ex_loc_bal}
\end{figure}
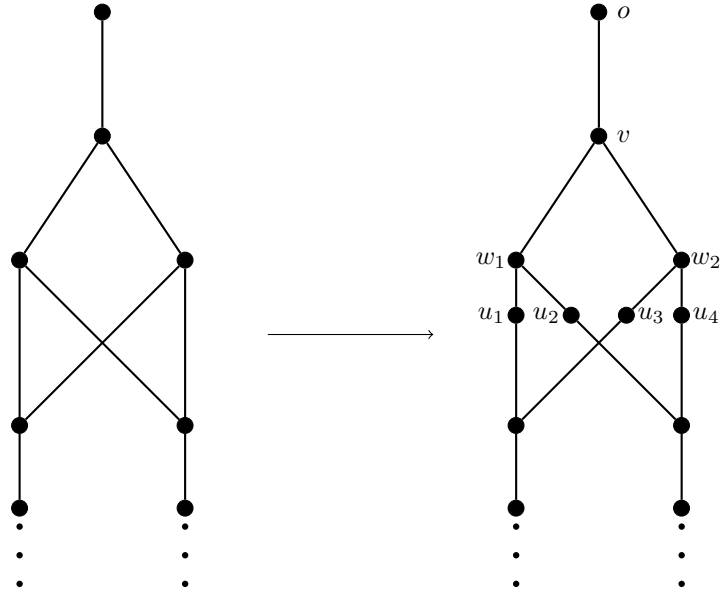

\subsection{Step 1 - Obtaining the discrete basis}
In this step we consider the discrete graph $G_{\Gamma}$. The purpose of this step is to extract the elements  $\left(\phi_{0}^{r}\right)_{r\in{\mathbb{N}}}$ from the basis given by the process described in the proof of Theorem \ref{thm_br} (\cite{BK}). Note that due to Proposition \ref{lemma_same_space} the other elements of the basis are irrelevant to the construction of $\left(F_r\right)_{r\in\mathbb{N}}$. We now provide a short description of this process. For simplicity, for a compactly supported $f\in{\ell^2(G_{\Gamma})}$ we denote $span\{f,\Delta_d\}\coloneqq\overline{span\left\{f,\Delta_{d}f,\Delta_{d}^2f,\ldots\right\}}$, where $\Delta_d$ is the discrete Laplacian acting on $\ell^2(G_{\Gamma})$. Note that for every $n\in{\mathbb{N}}$, $\Delta_{d}^{n}(f)$ is also compactly supported and thus is in the domain of $\Delta_d$
\begin{itemize}
	\item Define $\phi_{0}^{0}\coloneqq{\delta_o}$, where $o$ is the root of $G_{\Gamma}$. Also, define $W_0\coloneqq sp\left\{\delta_o,\Delta_d\right\}$.
	\item Now, let $n_1\in{\mathbb{N}}$ be minimal such that $\ell^2(S_{n_1})\nsubseteq W_0$. Let $\left\{\phi_{0}^{1},\ldots,\phi_{0}^{r_1}\right\}$ be an orthonormal basis of $\left(\ell^2(S_{n_1})\cap{W_0}\right)^{\perp}$ such that for every $1\leq i\leq r_1$ and every $j\geq 0$, $\phi_0^i$ commutes with $\Lambda_{n,+j}$ (the existence of such a basis when the graph is family preserving is proven in \cite{BK}). Define $W_1\coloneqq W_0\oplus \left(\underset{k=1,\ldots ,r_1}{\oplus}sp\left\{\phi_{0}^{r_k},\Delta_d\right\}\right)$.
	\item Proceed inductively.
\end{itemize}
With the labeling presented in Figure \ref{ex_loc_bal}, the first elements of the basis in our example are:
\begin{align*}
\phi_{0}^{0}&=\delta_0 \\
\phi_{0}^{1}&=\frac{\delta_{w_1}-\delta_{w_2}}{\sqrt{2}} \\
\phi_{0}^{2}&=\frac{\delta_{u_1}-\delta_{u_2}+\delta_{u_3}-\delta_{u_4}}{2}\\ \phi_{0}^{3}&=\frac{\delta_{u_1}-\delta_{u_2}-\delta_{u_3}+\delta_{u_4}}{2}
\end{align*}
Note that if the graph is continued by straight lines, these are the only elements in the basis.

Having obtained the elements $\left(\phi_{0}^{r}\right)_{r\in{\mathbb{N}}}$, the next step is to determine the spaces generated by them, as described in Section 2.2.
\subsection{Step 2 - Determining the spaces $\left(F_r\right)_{r\in{\mathbb{N}}}$}
Let $r\in\mathbb{N}$. Recall that for $x\in{\Gamma}$,
\begin{align*}
h^r(x)=\frac{1}{\sqrt{g_{n(r)}(|x|)\cdot|G_{x}\cap S_{n(r)}|}}\underset{v\in G_{x}\cap S_{n(r)}}{\sum}\phi_{0}^{r}(v)
\end{align*}
where $S_{n(r)}$ is the sphere on which $\phi_{0}^{r}$ is supported. Let
\begin{equation} \nonumber
a_r=\inf \left \{|x| \mid h^r(x)\neq 0 \right \}
\end{equation}
and
\begin{equation} \nonumber
b_r=\sup \left \{|x| \mid h^r(x)\neq 0 \right \}.
\end{equation}
Now let $f\in F_{r}$ (so that $f=P_{r}(f)$), and let $\varphi_{f}(x)=\left\langle
f_{|x|},h_{|x|}^{r}\right\rangle_{\mathbb{C}^{g_\Gamma(|x|)}}$. Then for every $f\in F_{r}$,
\begin{equation} \label{eq:f_mult}
f=\varphi_f(x)h^r(x).
\end{equation}
Clearly, $supp(f)\subseteq\left\{x\in\Gamma\,:\,|x|\in[a_r,b_r]\right\}$\footnote{If $b_r=\infty$, one should replace $[a_r,b_r]$ in $[a_r,b_r)$ etc.} and $|x|=|y|\Rightarrow\varphi_{f}(x)=\varphi_{f}(y)$.

Recall that $t_k$ is the distance in $\Gamma$ of a vertex $\in S_k$ from the root $o$.
Let $k\in\mathbb{N}$ be such that $a_{r}=t_{k}$. If $b_{r}=\infty$, let $l=\infty$, and otherwise let $l$
be the maximal $j\in\mathbb{N}$ such that $t_{j}\leq b_{r}$. Denote $A_{r}=\{k,k+1,\ldots,(l-1)\}$.
By the fact that $supp(\phi_{k}^{r})\subseteq{S_{n(r)+k}}$, one can conclude that for every $n<n(r)$, and every $v\in{S_n}$,
$\underset{u\in{G_v\cap{S_{n(r)}}}}{\sum}\phi_{0}^{r}(v)=0$. Taking this and the definition of $P_r$ into consideration, we conclude that
\mbox{$A_r=\{n(r)-1,n(r),\ldots,l-1\}$}.
\begin{claim}
	\label{const_on_sj}
	For $n(r)\leq{k}\in{A_r}$, and for $v\in{S_k}$, $\phi_{0}^{r}$ is constant on $G_v\cap{S_{n(r)}}$. For $v\in{S_{n(r)-1}}$,
	$\underset{u\in{G_v\cap{S_{n(r)}}}}{\sum}\phi_{0}^{r}(u)=0$.
\end{claim}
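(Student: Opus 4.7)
The claim splits into two assertions, which I would prove separately. I start with the assertion concerning $v \in S_{n(r)-1}$. The plan is to exploit two structural facts built into the construction of $\phi_0^r$: the inductive choice of $n(r)$ forces $\ell^2(S_n) \subseteq W_{r-1}$ for every $n<n(r)$, and $W_{r-1}$ is $\Delta_d$-invariant (being generated by $\Delta_d$-cyclic subspaces). Since $\phi_0^r \in W_{r-1}^\perp$, the vector $\Delta_d\phi_0^r$ is also orthogonal to $W_{r-1}$. Decomposing $\Delta_d\phi_0^r$ by supporting sphere yields three pieces, on $S_{n(r)-1}$, $S_{n(r)}$ and $S_{n(r)+1}$; the first is $-E_{n(r)-1}^T\phi_0^r$ and lies in $\ell^2(S_{n(r)-1}) \subseteq W_{r-1}$. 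Pairing $\Delta_d\phi_0^r$ with this piece then forces its squared norm to vanish, so $E_{n(r)-1}^T\phi_0^r \equiv 0$, which is exactly $\sum_{u \in G_v \cap S_{n(r)}} \phi_0^r(u) = 0$ for every $v \in S_{n(r)-1}$.

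For the first assertion (constancy of $\phi_0^r$ on $G_v \cap S_{n(r)}$ for $v \in S_k$ with $n(r) \leq k \in A_r$), the plan is to invoke Lemma \ref{lemma_const}, which reduces the problem to showing $\phi_{k-n(r)}^r \neq 0$. I would accomplish this in two substeps. First, establish monotonicity: $\phi_m^r = 0$ implies $\phi_{m+1}^r = 0$. By Remark \ref{remark_Br}, $\phi_m^r$ is (up to a nonzero constant) the restriction of $\Delta_d^m\phi_0^r$ to $S_{n(r)+m}$; since $\Delta_d^m\phi_0^r$ is supported in $\bigcup_{j=n(r)-m}^{n(r)+m} S_j$, the only contribution to $\Delta_d^{m+1}\phi_0^r|_{S_{n(r)+m+1}}$ comes from the $S_{n(r)+m}$-component of $\Delta_d^m\phi_0^r$, and vanishing of the latter forces vanishing of the former. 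Second, observe that $h^r$ is constant on every edge (neither $|G_x \cap S_{n(r)}|$ nor $\sum_{u \in G_x \cap S_{n(r)}} \phi_0^r(u)$ varies within an edge), and that on an edge $e$ of generation $j \geq n(r)$ this constant is a nonzero multiple of $\phi_{j-n(r)}^r(i(e))$ by Corollary \ref{same_num_paths} and Remark \ref{remark_Br}. Because $b_r \geq t_l$, there exist $x$ with $|x| > t_{l-1}$ and $h^r(x) \neq 0$, producing some $j \geq l-1$ with $\phi_{j-n(r)}^r \neq 0$; combined with the monotonicity this yields $\phi_{k-n(r)}^r \neq 0$ for every $k \leq l-1$, and Lemma \ref{lemma_const} finishes the argument.

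The main obstacle is the monotonicity substep: conceptually it is clear, but formalizing it relies on the sphere-by-sphere structure of $\Delta_d^m\phi_0^r$ together with the observation that, when propagating forward, $\Delta_d$ isolates the contribution of the top sphere. The remaining ingredients are then direct consequences of the construction of $W_{r-1}$ and the combinatorial lemmas already proved in this section.
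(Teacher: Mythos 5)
Your proof is correct, but for the main (first) assertion it takes a genuinely different route from the paper's. The paper reduces constancy at level $k$ to exhibiting a single $v\in S_k$ with $\sum_{u\in G_v\cap S_{n(r)}}\phi_0^r(u)\neq 0$: from such a $v$ it reads off a nonzero eigenvalue of $\Lambda_{n(r),+(k-n(r))}$ and then derives constancy on every level-$k$ family directly from Lemma \ref{lem:310} (essentially re-running the eigenvector argument of Lemma \ref{lemma_const} at that level); the existence of such a $v$ is then obtained by contradiction, since if every family sum at level $k$ vanished, the nesting of the sets $G_x\cap S_{n(r)}$ would force $h^r\equiv 0$ for $|x|>t_k$, contradicting $t_{k+1}\le b_r$. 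You instead invoke Lemma \ref{lemma_const} wholesale, shifting the burden to $\phi^r_{k-n(r)}\neq 0$, which you anchor once at the top generation of $A_r$ (via $b_r\ge t_l$ and the identification of $h^r$ on a generation-$j$ edge with a multiple of $\phi^r_{j-n(r)}(i(e))$) and propagate downward with the monotonicity $\phi_m^r=0\Rightarrow\phi_{m+1}^r=0$. The two reductions are equivalent (non-vanishing of $\phi^r_{k-n(r)}$ is exactly the existence of a level-$k$ family with nonzero sum, by Remark \ref{remark_Br} and Lemma \ref{En}), so your version buys economy -- it reuses Lemma \ref{lemma_const} rather than repeating its argument -- at the price of the extra monotonicity step, which the paper avoids by running the contradiction separately at each $k$. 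For the second assertion both proofs reduce to orthogonality of $\Delta_d\phi_0^r$ to $\ell^2(S_{n(r)-1})$; the paper gets this from the support statement in Theorem \ref{thm_br}$(ii)$, while you rederive it from the $\Delta_d$-invariance of the previously constructed cyclic subspaces, a legitimate and slightly more self-contained alternative. One small point to flag: the constant $c$ in Remark \ref{remark_Br} relating $(\Delta_d^{j-n(r)}\phi_0^r)|_{S_j}$ to $\phi^r_{j-n(r)}$ is not a priori nonzero, so ``$h^r|_e$ is a nonzero multiple of $\phi^r_{j-n(r)}(i(e))$'' should be read as the one implication you actually use, namely $h^r|_e\neq 0\Rightarrow\phi^r_{j-n(r)}\neq 0$, which does hold.
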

\begin{proof}
	The second part of the claim follows from the fact that $H_r$ is orthogonal to $\ell^2(S_{n(r)-1})$. As for the first part, it is enough
	to show that there exists $v\in{S_k}$ such that $\underset{u\in G_{v}\cap S_{n(r)}}{\sum}\phi_{0}^{r}(u)\neq 0$. Indeed, if this is true, then
	$\phi_{0}^{r}$ is an eigenvector of $\Lambda_{n(r),+(n(r)-k)}$ with an eigenvalue $\lambda \neq 0$. Now, if there exists some $w$ such that
	$\phi_{0}^{r}$ is not constant on $G_w\cap{S_{n(r)}}$, then by Lemma \ref{lem:310},
	$\underset{u\in{G_w\cap{S_{n(r)}}}}{\sum}\phi_{0}^{r}(u)=0$. This means that $\phi_{0}^{r}$ is an eigenvector of $\Lambda_{n(r),+(n(r)-k)}$
	with
	$0$ as an eigenvalue, which is a contradiction. Now assume that for every $v\in{S_k}$,
	$\underset{w\in{G_v\cap{S_{n(r)}}}}{\sum}\phi_{0}^{r}(w)=0$. Combining Lemma $\ref{lem:310}$ and the fact that for $x,y\in\Gamma$ such that
	$|x|>|y|>t_{n(r)}$ it holds that $G_y\cap{S_{n(r)}}\subseteq{G_x\cap{S_{n(r)}}}$, we conclude that for every $x\in{\Gamma}$ with $|x|>t_k$,
	$h^r(x)=0$. This contradicts the fact that $k\in{A_r}$.
\end{proof}
By \eqref{eq:f_mult}, the fact that $\varphi_f$ is symmetric and the fact that $h^r$ is constant along edges, we have that for every $f\in F_r$ and $e_1, e_2\in E(\Gamma)$ such that $gen(e_1)=gen(e_2)$, $f|_{e_1}$ and $f|_{e_2}$ are multiples of each other. In other words, the space $F_r$ is determined by $a_r, b_r$ and scalar coefficients attached to the graph's edges. The scalar attached to an edge $e$ is the value $h^r$ takes on some $x\in e$, and by Claim \ref{const_on_sj}, if we choose some $v\in G_x\cap S_{n(r)}$, then $h^r(y)=\sqrt{\frac{|G_x\cap S_{n(r)}|}{g_{n(r)}(|x|)}}\phi_{0}^{r}(v)$ for every $y\in e$. This description of $F_r$ is illustrated on $\widetilde{\Gamma}$ (for $r=1,2$) in Figure \ref{ex_Fr}.
\begin{figure}
	\begin{center}
		\begin{tikzpicture}[scale=1.35]
		\draw (3, 2) node{$r=1$};
		\vertex(o) at (3,1.5) {};
		\vertex(v) at (3,0) {};
		\draw[->] (3,0) -- (4.55,0);
		\draw (4.7, 0) node{$a_1$};
		\vertex(w1) at (2,-1.5) {};
		\draw (2.1, -0.75) node {$\frac{1}{\sqrt{2}}$};
		\vertex(w2) at (4,-1.5) {};
		\draw (3.9, -0.75) node {$\frac{-1}{\sqrt{2}}$};
		\vertex(u1) at (2, -2.1666) {};
		\draw (1.75, -1.85) node{$\frac{1}{2}$};
		\vertex(u2) at (2.6666, -2.1666) {};
		\draw (2.5666, -1.75) node{$\frac{1}{2}$};
		\vertex(u3) at (3.3333, -2.1666) {};
		\draw (3.4333, -1.75) node{$-\frac{1}{2}$};
		\vertex(u4) at (4, -2.1666) {};
		\draw (4.25, -1.85) node{$-\frac{1}{2}$};
		\vertex(x1) at (2,-3.5) {};
		\draw (1.75, -2.8) node{$\frac{1}{2}$};
		\draw (2.37, -2.8) node{$-\frac{1}{2}$};
		\vertex(x2) at (4, -3.5) {};
		\draw (3.6, -2.8) node{$\frac{1}{2}$};
		\draw (4.25, -2.8) node{$-\frac{1}{2}$};
		\draw[->] (4, -3.5) -- (4.55, -3.5);
		\draw (4.7, -3.5) node{$b_1$};
		\vertex(x3) at (2, -4.5) {};
		\vertex(x4) at (4, -4.5) {};
		\node[vertex, draw=none, fill=white](dots1) at (2, -5.5) {};
		\node[vertex, draw=none, fill=white](dots2) at (4, -5.5) {};
		\path (x3) -- (dots1) node [black, font=\Huge, midway, sloped] {$\dots$};
		\path (x4) -- (dots2) node [black, font=\Huge, midway, sloped] {$\dots$};
		\Edge(o)(v)
		\Edge(v)(w1)
		\Edge(v)(w2)
		\Edge(w1)(x1)
		\Edge(w1)(x2)
		\Edge(w2)(x1)
		\Edge(w2)(x2)
		\Edge(x2)(x4)
		\Edge(x1)(x3);
		\draw (9, 2) node{$r=2$};
		\vertex(o') at (9, 1.5) {};
		\vertex(v') at (9, 0) {};
		\vertex(w1') at (8, -1.5) {};
		\vertex(w2') at (10, -1.5){};
		\draw[->] (10, -1.5) -- (11.55, -1.5);
		\draw (11.7, -1.5) node{$a_2$};
		\vertex(u1') at (8, -2.1666) {};
		\draw (7.8, -1.85) node{$\frac{1}{2}$};
		\vertex(u2') at (8.6666, -2.1666) {};
		\draw (8.5, -1.75) node{$-\frac{1}{2}$};
		\vertex(u3') at (9.3333, -2.1666) {};
		\draw (9.4666, -1.75) node{$\frac{1}{2}$};
		\vertex(u4') at (10, -2.1666) {};
		\draw (10.2, -1.85) node{$-\frac{1}{2}$};
		\vertex(x1') at (8, -3.5) {};
		\draw (7.8, -2.8) node{$\frac{1}{2}$};
		\draw (8.42, -2.8) node {$\frac{1}{2}$};
		\vertex(x2') at (10, -3.5) {};
		\draw (9.6, -2.8) node {$-\frac{1}{2}$};
		\draw (10.2, -2.8) node {$-\frac{1}{2}$};
		\vertex(x3') at (8, -4.5) {};
		\draw (7.8, -4) node {$1$};
		\vertex(x4') at (10, -4.5) {};
		\draw (10.2, -4) node {$-1$};
		\node[vertex, draw=none, fill=white](dots1') at (8, -5.5) {};
		\node[vertex, draw=none, fill=white](dots2') at (10, -5.5) {};
		\path (x3') -- (dots1') node [black, font=\Huge, midway, sloped] {$\dots$};
		\path (x4') -- (dots2') node [black, font=\Huge, midway, sloped] {$\dots$};
		\Edge(o')(v')
		\Edge(v')(w1')
		\Edge(v')(w2')
		\Edge(w1')(u1')
		\Edge(w1')(u2')
		\Edge(w2')(u3')
		\Edge(w2')(u4')
		\Edge(u1')(x1')
		\Edge(u2')(x2')
		\Edge(u3')(x1')
		\Edge(u4')(x2')
		\Edge(x1')(x3')
		\Edge(x2')(x4');
		\draw (6, -5.5) node{$r=3$};
		\vertex (o'') at (6, -6) {};
		\vertex (v'') at (6, -7.5) {};
		\vertex (w1'') at (5, -9) {};
		\vertex (w2'') at (7, -9) {};
		\draw[->] (7, -9) -- (8.55, -9);
		\draw (8.7, -9) node{$a_3$};
		\vertex (u1'') at (5, -9.6666) {};
		\draw (4.8, -9.35) node{$\frac{1}{2}$};
		\vertex (u2'') at (5.6666, -9.6666) {};
		\draw (5.5, -9.25) node{$-\frac{1}{2}$};
		\vertex (u3'') at (6.3333, -9.6666) {};
		\draw (6.4, -9.25) node{$-\frac{1}{2}$};
		\vertex (u4'') at (7, -9.6666) {};
		\draw (7.2, -9.35) node{$\frac{1}{2}$};
		\vertex (x1'') at (5, -11) {};
		\draw (4.8, -10.3) node {$\frac{1}{2}$};
		\draw (5.38, -10.3) node{$-\frac{1}{2}$};
		\vertex (x2'') at (7, -11) {};
		\draw[->] (7, -11) -- (8.55, -11);
		\draw (8.7, -11) node{$b_3$};
		\draw (6.52, -10.3) node{$-\frac{1}{2}$};
		\draw (7.2, -10.3) node{$\frac{1}{2}$};
		\vertex (x3'') at (5, -12) {};
		\vertex (x4'') at (7, -12) {};
		\node[vertex, draw=none, fill=white](dots1'') at (5, -13) {};
		\path (x3'') -- (dots1'') node [black, font=\Huge, midway, sloped] {$\dots$};
		\node[vertex, draw=none, fill=white](dots2'') at (7, -13) {};
		\path (x4'') -- (dots2'') node [black, font=\Huge, midway, sloped] {$\dots$};
		\Edge(o'')(v'')
		\Edge(v'')(w1'')
		\Edge(v'')(w2'')
		\Edge(w1'')(u1'')
		\Edge(w1'')(u2'')
		\Edge(w2'')(u3'')
		\Edge(w2'')(u4'')
		\Edge(u1'')(x1'')
		\Edge(u2'')(x2'')
		\Edge(u3'')(x1'')
		\Edge(u4'')(x2'')
		\Edge(x1'')(x3'')
		\Edge(x2'')(x4'');
		\end{tikzpicture}
	\end{center}
	\captionof{figure}{A graphic description of the spaces $F_r$ for $r=1,2,3$ with $a_r, b_r$ and the appropriate coefficients (i.e.\ $h^r$) attached to the graph's edges. Note that for $r=2$, $b_2$ depends on the continuation of the graph and might be $\infty$ (for example in the case where the graph is just continued by straight lines).}
	\label{ex_Fr}
\end{figure}
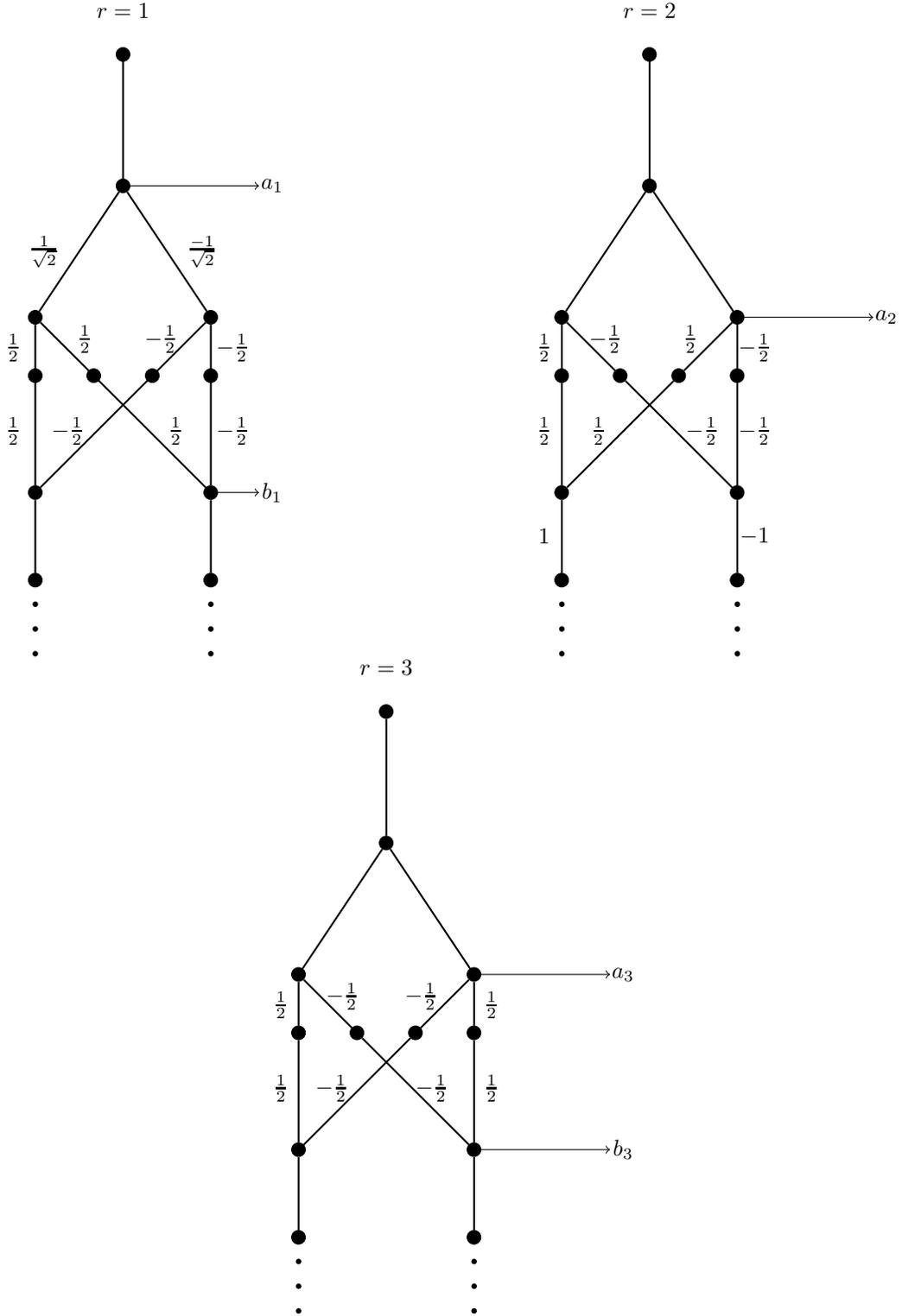
\subsection{Step 3 - Obtaining the one dimensional components}
The last step is to describe the one-dimensional operator to which $\Delta|_{F_r}$ is unitarily equivalent. The following computation shows that the scalars attached to the edges in Step 2 disappear under inner product. Let $\psi, \eta \in F_r$. Due to the fact that $\varphi_f$ is symmetric for every $f\in F_r$, for $t\geq 0$ we may define
\begin{equation} \label{eq:Ur}
U_r(f)(x)=\varphi_f(x)
\end{equation}
for some $x\in \Gamma$ with $|x|=t$. We have
\begin{align*}
\left\langle \psi,\eta \right\rangle_{L^2(\mu)} & =\int_{\Gamma}\varphi_{\psi}(x)h^{r}(x)\overline{\varphi_{\eta}(x)h^{r}(x)}d\mu(x)=\\
&
=\int_{0}^{h(\Gamma)}U_r(\psi)(t)\overline{U_r(\eta)(t)}\underset{=1}{\underbrace{\underset{|x|=t}{\sum}h^{r}(x)\overline{h^{r}(x)}}}dt=\\
& =\int_{0}^{h(\Gamma)}U_r(\psi)(t)\overline{U_r(\eta)(t)}dt=\\
& =\int_{a_{r}}^{b_{r}}U_r(\psi)(t)\overline{U_r(\eta)(t)}dt=\left\langle U_{r}(\psi),U_{r}(\eta)\right\rangle_{L^2([a_r,b_r])}.
\end{align*}
This implies that $U_r: F_r\rightarrow L^2([a_r,b_r])$ is a unitary operator. In addition, by the same reasoning as in the proof of Theorem \ref{thm_main}, $(ii)$, we have $U_{r}\Delta=\Delta_{r}U_{r}$, where $\Delta_{r}$ is the unbounded operator whose domain is $U_{r}(F_{r}\cap D(\Delta))$, and on that domain, $\Delta_{r}(f)=-f''$. We conclude that $\Delta|_{F_{r}}$ is unitarily equivalent to $\Delta_{r}$, which means that $\Delta\sim\underset{r\in\mathbb{N}}{\oplus}\Delta|_{F_{r}}\sim\underset{r\in\mathbb{N}}{\oplus}\Delta_{r}$. The rest of this subsection deals with analyzing the domains of the operators $\left(\Delta_r\right)_{r\in{\mathbb{N}}}$.

Divide the segment $(a_{r},b_{r})$ into the disjoint collection of segments
\[
\left\{(t_{j},t_{j+1})\right\}_{j\in A_{r}}\coloneqq\{I_{j}\}_{j\in A_{r}}.
\]

Note that for fixed $n$ and every $x\in\Gamma$, $|G_{x}\cap S_{n}|$ depends only on $|x|$. For $|x|=t$ ,let $w_{n}(t)=|G_x \cap S_n|$.
Now, by \eqref{eq:Ur}, and using \eqref{eq:hphi}, we may write explicitly
\begin{equation} \label{eq:UrForm}
U_r(f)(t)=\underset{|x|=t}{\sum}f(x)\frac{1}{\sqrt{g_{n(r)}(t)w_{n(r)}(t)}}\underset{u\in G_{x}\cap S_{n(r)}}{\sum}\phi_{0}^{r}(u).
\end{equation}
Note that for every $j\in A_{r}$, the functions $g_{n(r)}$ and $w_{n(r)}$ are constant on $I_{j}$ as they
change their values only at the $t_j$'s. Thus, $U_{r}(f)|_{I_{j}}=\varphi_{f}|_{I_{j}}$ is a linear combination of functions in $H^2(I_j)$,
so we have $U_r(f)|_{I_{j}}\in H^{2}(I_{j})$. In addition, we have
\[
\int_{a_r}^{b_r}|(U_{r}(f))''|^2dx=\int_{\Gamma}|\Delta f|^2d\mu<\infty.
\]

Functions in $D(\Delta)$ satisfy certain matching conditions on vertices,
which transform under $U_{r}$ into matching conditions on the connection
points of the segments $\left(I_{j}\right)_{j\in A_{r}}$. In order
to express these matching conditions, for every $j\in A_{r}$ we compute
\[
\varphi_{f}(t_{j}+)=\underset{\epsilon\rightarrow0}{lim}\,\varphi_{f}(t_{j}+\epsilon),\,\,\,\,\,\,\,\,\varphi_{f}(t_{j}-)=\underset{\epsilon\rightarrow0}{lim}\,\varphi_{f}(t_{j}-\epsilon)
\]
for $f\in F_{r}$. Let $\epsilon>0$.
\begin{align*}
\varphi_{f}(t_{j}+\epsilon) & =\left\langle f_{t_{j}+\epsilon},h_{t_{j}+\epsilon}^{r}\right\rangle=\\
& =\underset{|x|=t_{j}+\epsilon}{\sum}f(x)\frac{1}{\sqrt{g_{n}(t_{j}+\epsilon)w_{n}(t_{j}+\epsilon)}}\underset{u\in G_{x}\cap
	S_{n(r)}}{\sum}\phi_{0}^{r}(u)\\
& =\underset{v\in S_{j}}{\sum}\underset{|x|=t_{j}+\epsilon}{\underset{x\in
		G_{v}}{\sum}}f(x)\frac{1}{\sqrt{g_{n}(t_{j}+\epsilon)w_{n}(t_{j}+\epsilon)}}\underset{u\in G_{x}\cap S_{n(r)}}{\sum}\phi_{0}^{r}(u)
\end{align*}
The functions $g_{n},\,w_{n}$ are constant on $\left(t_{j},t_{j+1}\right)$,
so we may write $g_{n}(t_{j}+\epsilon)=g_{n}^{j}$ and $w_{n}(t_{j}+\epsilon)=w_{n}^{j}$
for sufficiently small $\epsilon$. Now, taking $\epsilon\rightarrow0$,
we have
\[
\varphi_{f}(t_{j}+)=\frac{1}{\sqrt{g_{n}^{j}w_{n}^{j}}}\underset{v\in S_{j}}{\sum}\underset{e:i(e)=v}{\sum}\underset{x\rightarrow
	v}{lim}\left((f|_{e})(x)\underset{\star}{\underbrace{\underset{u\in G_{x}\cap S_{n(r)}}{\sum}\phi_{0}^{r}(u)}}\right)
\]
Now, $\star$ is constant on every edge, so for every edge $e$ we
pick $x(e)\in e$ and we have
\begin{align*}
\varphi_{f}(t_{j}+) & =\frac{1}{\sqrt{g_{n(r)}^{j}w_{n(r)}^{j}}}\underset{v\in S_{j}}{\sum}\underset{e:i(e)=v}{\sum}\left(\underset{u\in
	G_{x(e)}\cap S_{n(r)}}{\sum}\phi_{0}^{r}(u)\right)f(v)=\\
& =\frac{1}{\sqrt{g_{n(r)}^{j}w_{n(r)}^{j}}}\underset{v\in S_{j}}{\sum}f(v)\underset{e:i(e)=v}{\sum}\left(\underset{u\in G_{x(e)}\cap
	S_{n(r)}}{\sum}\phi_{0}^{r}(u)\right)
\end{align*}
Now note that for every $v\in S_{j}$ we have that $\underset{e:i(e)=v}{\sum}\left(\underset{u\in G_{x(e)}\cap
	S_{n(r)}}{\sum}\phi_{0}^{r}(u)\right)$
is some multiple of $\underset{u\in G_{v}\cap S_{n(r)}}{\sum}\phi_{0}^{r}(u)$.
Thus, for $j=n(r)-1$, by the second part of Claim \ref{const_on_sj} we have that
$\varphi_{f}(t_{j}+)=0$. For $j\geq n(r)$, again by Claim \ref{const_on_sj} we
have that $\phi_{0}^{r}$ is constant on $G_{v}\cap S_{n(r)}$, and
for every $e$ such that $i(e)=v$ and $x\in e$, $G_{x}\cap S_{n(r)}=G_{v}\cap S_{n(r)}$.
Thus, $\phi_{0}^{r}|_{G_{x}\cap S_{n(r)}}\equiv\phi_{0}^{r}(u_{v})$
for some $u_{v}\in G_{v}\cap S_{n(r)}$. Now, if we denote (as before) by $b_{j}^{out}$
the number of edges emanating from vertices in $S_{j}$ (which is
the same for every vertex due to the symmetry of the graph), we have
\begin{align*}
\varphi_{f}(t_{j}+) & =\frac{1}{\sqrt{g_{n(r)}^{j}w_{n(r)}^{j}}}\underset{v\in S_{j}}{\sum}f(v)\cdot b_{j}^{out}\cdot
w_{n(r)}^{j}\phi_{0}^{r}(u_{v})=\\
& =\frac{w_{n(r)}^{j}b_{j}^{out}}{\sqrt{g_{n(r)}^{j}w_{n(r)}^{j}}}\underset{v\in S_{j}}{\sum}f(v)\phi_{0}^{r}(u_{v})=\\
& =\frac{\sqrt{w_{n(r)}^{j}}b_{j}^{out}}{\sqrt{g_{n(r)}^{j}}}\underset{v\in S_{j}}{\sum}f(v)\phi_{0}^{r}(u_{v})\\
\end{align*}
By a similar computation, it can be shown that $\varphi_{f}(b_{r})=0$
(in the case that $b_{r}<\infty$), and (with the proper notations)
that for $j\geq n(r)$
\[
\varphi_{f}(t_{j}-)=\frac{\sqrt{w_{n(r)}^{j-1}}b_{j}^{in}}{\sqrt{g_{n(r)}^{j-1}}}\underset{v\in S_{j}}{\sum}f(v)\phi_{0}^{r}(u_{v})
\]
Thus, for $j\geq n(r)$,
\[
U_{r}(f)(t_{j}+)=d_{j}^{r}U_{r}(f)(t_{j}-)
\]
where
\[
d_{j}^{r}=\frac{\sqrt{w_{n(r)}^{j}g_{n(r)}^{j-1}}b_{j}^{out}}{\sqrt{w_{n(r)}^{j-1}g_{n(r)}^{j}}b_{j}^{in}}.
\]
By a similar computation, using the matching condition of the derivative, we also have
\begin{align*}
U_r(f)'(t_j+)=c_{j}^{r}U_r(f)'(t_j-)
\end{align*}
where
\begin{align*}
c_{j}^{r}=\sqrt{\frac{w_{n(r)}^{j}g_{n(r)}^{j-1}}{w_{n(r)}^{j-1}g_{n(r)}^{j}}}.
\end{align*}
\begin{rem}\label{rem_dj}
	In the case that $r=0$, $n(r)=0$ as the function $\phi_{0}^{r}$ is chosen to be $\delta_o$. Thus $w_{n(r)}^{j}=1$ for every
	$j\in{\mathbb{N}}$. In addition, it can be seen that $b_{j}^{in}=\frac{g_{0}^{j-1}}{|S_j|}$ and $b_{j}^{out}=\frac{g_{0}^{j}}{|S_j|}$. From
	here, it can be seen that
	\begin{align*}
	d_{j}^{0}=\frac{\sqrt{g_{0}^{j}}}{\sqrt{g_{0}^{j-1}}}
	\end{align*}
	and that $c_j^0=\frac{1}{d_j^0}$.
\end{rem}
To conclude, $D(\Delta_{r})$ consists of all of the functions in
$L^{2}(a_{r},h(\Gamma))$ which satisfy the following conditions:
\begin{enumerate}
	\item $f(a_r)=0$. if $b_r<\infty$, then also $f(b_r)=0$.
	\item $\forall j\in A_{r}$ $f|_{I_{j}}\in H^{2}(I_{j})$.
	\item $\underset{j\in A_{r}}{\sum}\int_{I_{j}}|f^{''}(t)|dt<\infty$.
	\item $f(t_{j}+)=d_{j}^{r}f(t_{j}-)$ for every $n(r)\leq{j}\in A_{r}$.
	\item $f'(t_{j}+)=c_{j}^{r}f'(t_{j}-)$ for every $n(r)\leq j\in A_{r}$.
\end{enumerate}
In the case that $A_r$ is finite, functions in $D(\Delta_{r})$ also satisfy the condition $f(b_r)=0$.

For the example $\widetilde{\Gamma}$, the one dimensional version of the spaces $F_1,\,F_2$ from Figure \ref{ex_Fr} are shown in Figure \ref{ex_one_dim}. Note that functions in $D(\Delta_1)$ are defined on a compact segment (i.e. $b_1<\infty$) whereas functions in $D(\Delta_2)$ may not be compactly supported (for example, this is the case when the graph is continued by straight lines).

To conclude, the operator $\Delta_r$ is a Sturm-Liouville operator on a  $L^2(I)$ where $I\subseteq [0,\infty)$ with boundary conditions along points which correspond with $\left(t_l\right)_{l\in{A_r}}$. Since locally this operator acts as a second derivative, we henceforth refer to such
operators as `weighted Laplacians'.

\begin{figure}
	\begin{center}
		\begin{tikzpicture}[scale=1]
		\draw (0, 0.5) node {$r=1$};
		\vertex(v) at (0, 0) {};
		\vertex(w) at (0, -1.5) {};
		\draw (1.6, -1.5) node {$d_{2}^{1}=\sqrt{2},\, c_{2}^{2}=\frac{1}{\sqrt{2}}$};
		\vertex(u) at (0, -2.1666) {};
		\draw (1.6, -2.1666) node {$d_{3}^{1}=c_{3}^{1}=1$};
		\vertex(x1) at (0, -3.5) {};
		\Edge(v)(w)
		\Edge(w)(u)
		\Edge(u)(x1)
		\draw (6, 0.5) node {$r=2$};
		\vertex(v') at (6, 0) {};
		\vertex(w') at (6, -0.6666) {};
		\draw (7.6, -0.6666) node {$d_{3}^{2}=c_{3}^{2}=1$};
		\vertex(u') at (6, -2) {};
		\draw (7.6, -2) node {$d_{4}^{2}=\frac{1}{\sqrt{2}}\,c_{4}^{2}=\sqrt{2}$};
		\vertex(x1') at (6, -3) {};
		\Edge(v')(w')
		\Edge(w')(u')
		\Edge(u')(x1')
		\node[vertex, draw=none, fill=white](dots') at (6, -4.5) {};
		\path (x1') -- (dots') node [black, font=\Huge, midway, sloped] {$\dots$};
		\end{tikzpicture}
	\end{center}
	\captionof{figure}{The vertex conditions for the one dimensional versions of $F_1,\,F_2$.}
	\label{ex_one_dim}
\end{figure}
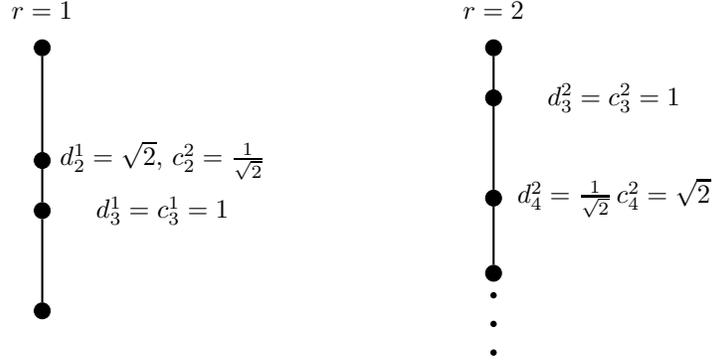

\subsection{Further Discussion}
In the algorithm presented in the proof of \cite[Theorem 2.6]{BrKe}, the spaces $\ell_2\left(S_n\right)$ are spanned iteratively where at each step, one spans the part of $\ell_2\left(S_n\right)$ which is not already spanned using a basis of mutual eigenfunctions of the collection $\Lambda_{n,+j}$. The functions obtained are the $\phi_0^r$, which we then use to construct $h^r$ above. As the characterization through mutual eigenfunctions of the family $\Lambda_{n,+j}$ is non-local, two natural questions come to mind regarding the description of the functions $\phi_0^r$ and $h^r$. 

The first question regards the choice of $\phi_{0}^{r}$ and asks whether there is some generic choice that can be made at each step and would work for any family preserving graph. In Section 5.1, we show that the decomposition described in \cite{NS} actually corresponds to a concrete choice of such a basis at each step, in the case where $\Gamma$ is a radial (spherically symmetric) tree. The choice there, however, is possible only for a tree, as it corresponds to cutting the tree into subtrees by deleting an edge between two generations. Another possible choice for $\phi_0^r$ comes to mind: perhaps it is possible to use roots of unity of a degree corresponding to the size of $S_{n(r)}$ in a universal way which would work for any family preserving graph. Figure \ref{ex_non_local} gives an example of two trees with $|S_2|=6$, where it is clear that the choice of $\phi_0$ supported on $S_2$ depends on the structure of $S_1$. By extension, it is clear that any choice of the functions $\phi_0^r$ on trees depends on the structure of the entire tree up to the relevant sphere. For general family preserving graphs, the choice would depend on the entire structure of the graph also beyond the relevant sphere, showing that a local universal choice is not possible.

Another question regards finding a more immediate connection between the edge weights, $h^r$, and the graph structure. In particular, since $h^r$ are constant on edges it is natural to consider them on the line graph and ask whether they can be obtained as eigenfunctions of some natural discrete operator on that graph, e.g.\ the adjacency matrix or the discrete Laplacian. While we cannot rule out the possibility of the existence of some operator with this property, Figure \ref{ex_not_ef} describes an example where even $h^0$ (the weight associated with the space of spherically symmetric functions on the graph) is not an eigenfunction of the adjacency matrix, $A$ or the Laplacian $\Delta_d$ on the line graph.

\begin{figure}
	\begin{center}
		\begin{tikzpicture}[scale=1]
		\vertex(v) at (0, 0) {};
		\vertex(w1) at (-1.5, -1.5) {};
		\vertex(w2) at (1.5, -1.5) {};
		\vertex(u1) at (-2.5, -3) {};
		\vertex(u2) at (-1.5, -3) {};
		\vertex(u3) at (-0.5, -3) {};
		\vertex(x1) at (0.5, -3) {};
		\vertex(x2) at (1.5, -3) {};
		\vertex(x3) at (2.5, -3) {};W
		\node[vertex, draw=none, fill=white](dots1) at (-2.5, -4) {};
		\node[vertex, draw=none, fill=white](dots2) at (-1.5, -4) {};
		\node[vertex, draw=none, fill=white](dots3) at (-0.5, -4) {};
		\node[vertex, draw=none, fill=white](dots4) at (0.5, -4) {};
		\node[vertex, draw=none, fill=white](dots5) at (1.5, -4) {};
		\node[vertex, draw=none, fill=white](dots6) at (2.5, -4) {};
		\path (u1) -- (dots1) node [black, font=\Huge, midway, sloped] {$\dots$};
		\path (u2) -- (dots2) node [black, font=\Huge, midway, sloped] {$\dots$};
		\path (u3) -- (dots3) node [black, font=\Huge, midway, sloped] {$\dots$};
		\path (x1) -- (dots4) node [black, font=\Huge, midway, sloped] {$\dots$};
		\path (x2) -- (dots5) node [black, font=\Huge, midway, sloped] {$\dots$};
		\path (x3) -- (dots6) node [black, font=\Huge, midway, sloped] {$\dots$};
		\Edge(v)(w1)
		\Edge(v)(w2)
		\Edge(w1)(u1)
		\Edge(w1)(u2)
		\Edge(w1)(u3)
		\Edge(w2)(x1)
		\Edge(w2)(x2)
		\Edge(w2)(x3);
		\vertex(v') at (6, 0) {};
		\vertex(w1') at (4.5, -1.5) {};
		\vertex(w2') at (6, -1.5) {};
		\vertex(w3') at (7.5, -1.5) {};
		\vertex(u1') at (4, -3) {};
		\vertex(u2') at (5, -3) {};
		\vertex(x1') at (5.5, -3) {};
		\vertex(x2') at (6.5, -3) {};
		\vertex(y1') at (7, -3) {};
		\vertex(y2') at (8, -3) {};
		\node[vertex, draw=none, fill=white](dots1') at (4, -4) {};
		\node[vertex, draw=none, fill=white](dots2') at (5, -4) {};
		\node[vertex, draw=none, fill=white](dots3') at (5.5, -4) {};
		\node[vertex, draw=none, fill=white](dots4') at (6.5, -4) {};
		\node[vertex, draw=none, fill=white](dots5') at (7, -4) {};
		\node[vertex, draw=none, fill=white](dots6') at (8, -4) {};
		\path (u1') -- (dots1') node [black, font=\Huge, midway, sloped] {$\dots$};
		\path (u2') -- (dots2') node [black, font=\Huge, midway, sloped] {$\dots$};
		\path (x1') -- (dots3') node [black, font=\Huge, midway, sloped] {$\dots$};
		\path (x2') -- (dots4') node [black, font=\Huge, midway, sloped] {$\dots$};
		\path (y1') -- (dots5') node [black, font=\Huge, midway, sloped] {$\dots$};
		\path (y2') -- (dots6') node [black, font=\Huge, midway, sloped] {$\dots$};
		\Edge(v')(w1')
		\Edge(v')(w2')
		\Edge(v')(w3')
		\Edge(w1')(u1')
		\Edge(w1')(u2')
		\Edge(w2')(x1')
		\Edge(w2')(x2')
		\Edge(w3')(y1')
		\Edge(w3')(y2');
		\end{tikzpicture}
	\end{center}
	\captionof{figure}{In this example, the number of vertices in $S_2$ is the same in both graphs ($6$). Using the fact that the elements of the collection $\phi_{0}^{r}$ which are supported on $S_2$ are orthogonal to $\ell_2\left(S_1\right)$, one can easily show that it is not possible for the spanning collections on both graphs to be the same.}
	\label{ex_non_local}
\end{figure}
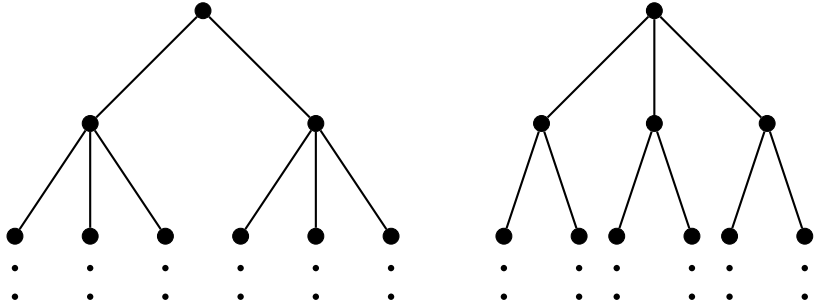

\begin{figure}
	\begin{center}
		\begin{tikzpicture}[scale=1]
		\vertex(o) at (0,0) {};
		\vertex(v) at (0,-1.5) {};
		\vertex(w1) at (-1,-3) {};
		\vertex(w2) at (0,-3) {};
		\vertex(w3) at (1,-3) {};
		\vertex(u) at (0,-4.5) {};
		\vertex(x1) at (-1,-6) {};
		\vertex(x2) at (0,-6) {};
		\vertex(x3) at (1,-6) {};
		\vertex(z) at (0,-7.5) {};
		\vertex(y1) at (-1,-9) {};
		\vertex(y2) at (1,-9) {};
		\Edge(o)(v)
		\Edge(v)(w1)
		\Edge(v)(w2)
		\Edge(v)(w3)
		\Edge(w1)(u)
		\Edge(w2)(u)
		\Edge(w3)(u)
		\Edge(u)(x1)
		\Edge(u)(x2)
		\Edge(u)(x3)
		\Edge(x1)(z)
		\Edge(x2)(z)
		\Edge(x3)(z)
		\Edge(z)(y1)
		\Edge(z)(y2);
		\node[vertex, draw=none, fill=white](dots1) at (-1, -10) {};
		\path (y1) -- (dots1) node [black, font=\Huge, midway, sloped] {$\dots$};
		\node[vertex, draw=none, fill=white](dots2) at (1, -10) {};
		\path (y2) -- (dots2) node [black, font=\Huge, midway, sloped] {$\dots$};
		\vertex(o') at (5,0) {};
		\vertex(u1') at (4,-1.5) {};
		\vertex(u2') at (5,-1.5) {};
		\vertex(u3') at (6,-1.5) {};
		\vertex(w1') at (4,-3) {};
		\vertex(w2') at (5,-3) {};
		\vertex(w3') at (6,-3) {};
		\vertex(v1') at (4,-4.5) {};
		\vertex(v2') at (5,-4.5) {};
		\vertex(v3') at (6,-4.5) {};
		\vertex(x1') at (4,-6) {};
		\vertex(x2') at (5,-6) {};
		\vertex(x3') at (6,-6) {};
		\vertex(y1') at (4,-7.5) {};
		\vertex(y2') at (6,-7.5) {};
		\Edge(o')(u1')
		\Edge(o')(u2')
		\Edge(o')(u3')
		\Edge(v1')(w1')
		\Edge(v1')(w2')
		\Edge(v1')(w3')
		\Edge(v2')(w1')
		\Edge(v2')(w2')
		\Edge(v2')(w3')
		\Edge(v3')(w1')
		\Edge(v3')(w2')
		\Edge(v3')(w3')
		\Edge(w1')(u1')
		\Edge(w2')(u2')
		\Edge(w3')(u3')
		\Edge(v1')(x1')
		\Edge(v2')(x2')
		\Edge(v3')(x3')
		\Edge(x1')(y1')
		\Edge(x1')(y2')
		\Edge(x2')(y1')
		\Edge(x2')(y2')
		\Edge(x3')(y1')
		\Edge(x3')(y2');
		\node[vertex, draw=none, fill=white](dots1') at (4, -8.5) {};
		\path (y1') -- (dots1') node [black, font=\Huge, midway, sloped] {$\dots$};
		\node[vertex, draw=none, fill=white](dots2') at (6, -8.5) {};
		\path (y2') -- (dots2') node [black, font=\Huge, midway, sloped] {$\dots$};
		\end{tikzpicture}
	\end{center}
	\captionof{figure}{The weights associated with the symmetric component of the graph on the left side are symmetric themselves. Thus, for every $k$, the weight of edges on the $k$'th sphere is just $\frac{1}{\sqrt{n_k}}$ where $n_k$ is the number of edges on that sphere. In this case, one can easily verify that the edge weights are not eigenfunctions of the adjancency operator nor of the Laplacian on the line graph (on the right side).}
	\label{ex_not_ef}
\end{figure}
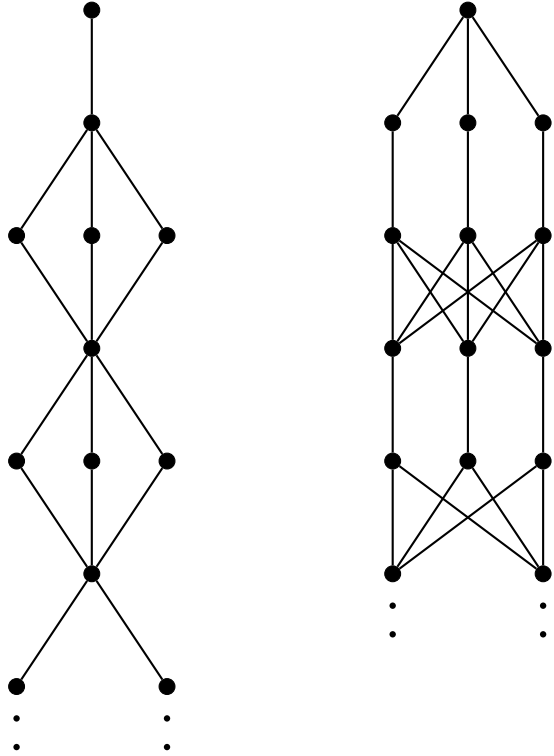


\section{Examples}

In this section we first describe how the results of \cite{NS, Sol} are a special case of our analysis here. Then, in order to demonstrate the
utility of our method, we apply it to obtain a result on metric antitrees.

\subsection{Radial Trees}

A metric tree $\Gamma$ is called \emph{radial (or regular) }if
for every $x,y\in S_{n}$, $deg(x)=deg(y)$, and for any two edges of the same generation $e_{1},\,e_{2}$, $l(e_{1})=l(e_{2})$.
\begin{claim}
Every regular tree is family preserving.
\end{claim}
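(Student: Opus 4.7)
The plan is to verify the two conditions of Definition \ref{fpg} for the underlying combinatorial graph $G_\Gamma$; spherical homogeneity on the metric side is built into the definition of a regular tree, giving Definition \ref{def:FPM}.

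First I would observe that condition $(ii)$ is \emph{vacuous} for trees. If $u,v\in S_n$ with $u\ne v$ had a common forward neighbor $w\in S_{n+1}$, then the edges $(u,w),(v,w)$ combined with shortest paths from the root to $u$ and to $v$ would yield a cycle, contradicting the tree assumption. So distinct vertices in $S_n$ are never forward neighbors, and condition $(ii)$ is satisfied trivially.

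Next I would verify condition $(i)$. Suppose $u,v\in S_n$ are backward neighbors; because $\Gamma$ is a tree, each of $u$ and $v$ has a unique parent in $S_{n-1}$, and backward neighbors forces these parents to coincide, say $w\in S_{n-1}$. Let $T_u$ (resp.\ $T_v$) denote the subtree consisting of $u$ (resp.\ $v$) together with all its descendants. I will construct a rooted automorphism $\tau$ that swaps $T_u$ and $T_v$ and fixes everything else. By the tree property, $V(G_\Gamma)$ is the disjoint union of $T_u$, $T_v$, and a third region $R$ with $w\in R$ and $R\supseteq\bigcup_{k<n}S_k$.

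The key step is a level-by-level construction of a bijection $\sigma:T_u\to T_v$. Set $\sigma(u)=v$. Inductively, once $\sigma$ is defined on the vertices of $T_u\cap S_{n+k}$ with images in $T_v\cap S_{n+k}$, for each $x\in T_u\cap S_{n+k}$ with image $y=\sigma(x)$, regularity gives $\deg(x)=\deg(y)$, so $x$ and $y$ have the same number of children; pick any bijection between their children and extend $\sigma$. Now define
\[
\tau(x)=\begin{cases}\sigma(x),& x\in T_u,\\ \sigma^{-1}(x),& x\in T_v,\\ x,& x\in R.\end{cases}
\]
Clearly $\tau$ is a bijection with $\tau(o)=o$ and $\tau|_{S_{n-j}}=\mathrm{Id}$ for $1\le j\le n$.

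The main (and only non-routine) step is checking that $\tau$ preserves the edge relation. Edges inside $R$ are fixed. Edges inside $T_u$ are mapped by $\sigma$ to edges inside $T_v$ by construction (and symmetrically for $T_v$). The only edges connecting $T_u\cup T_v$ to $R$ are $(w,u)$ and $(w,v)$, and since $\tau(w)=w$, $\tau(u)=v$, $\tau(v)=u$, these two edges are swapped. Hence $\tau$ is a rooted graph automorphism with $\tau(u)=v$ satisfying the required support property, establishing condition $(i)$ and completing the proof.
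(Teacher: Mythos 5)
Your proposal is correct and follows essentially the same route as the paper: condition $(ii)$ is vacuous since a tree has no distinct forward neighbors, and condition $(i)$ is handled by building an isomorphism between the two subtrees rooted at $u$ and $v$ (which the paper asserts exists by regularity and you construct explicitly level by level) and extending it by the identity elsewhere. The extra details you supply — the level-by-level construction of $\sigma$ and the check of edges crossing between the subtrees and the rest of the graph — are exactly the verifications the paper leaves as ``easy to verify.''
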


\begin{proof}
Let $\Gamma$ be a regular tree, and let $v,u\in S_{n}$ be backward
neighbors. $\Gamma$ is a tree, so the sets $A_{v}=\left\{w\in V(\Gamma)\,:\,v\preceq w\right\}$,
$A_{u}=\left\{w\in V(\Gamma)\,:\,u\preceq w\right\}$ are disjoint. Furthermore,
regularity of $\Gamma$ implies that the graphs induced by those sets
are isomorphic as rooted graphs, with $u,v$ as roots. Let $\sigma:\,A_{v}\rightarrow A_{u}$
be an isomorphism between the induced graphs. Define
\[
\tau(w)=\begin{cases}
\sigma(w) & w\in A_{v}\cup A_{u}\\
w & else
\end{cases}.
\]
It is easy to verify that $\tau$ is a rooted graph automorphism which
satisfies $\tau|_{S_{n-j}}=Id$ for every $j\geq1$. $\Gamma$ is
a tree, so there are no forward neighbors, and that finishes the proof.
\end{proof}
A decomposition for the Laplacian on regular trees was presented in
\cite{NS,Sol}. We will present the decomposition here, and
show that it is a special case of the decomposition presented in the
proof of Theorem \ref{thm_main}.

\vbox{For every $v\in{V(\Gamma)}$, denote by $b(v)$ the number of edges emanating from $v$ (i.e., the edges going `away' from the root), and order the edges $\left(e_{v}^{1},\ldots,e_{v}^{b(v)}\right)$. For every $u\in{V(\Gamma)}$, let $A_{u}=\{w\in V(\Gamma)\,:\,u\preceq w\}$. For every $1\leq{j}\leq{b(v)}$, let $G_{e_{v}^{j}}$ be the subtree of $\Gamma$ generated by $A_{t\left(e_{v}^{j}\right)}\cup \left\{v\right\}$ (see Figure \ref{ex_subtree_def} for an illustration of these definitions).}
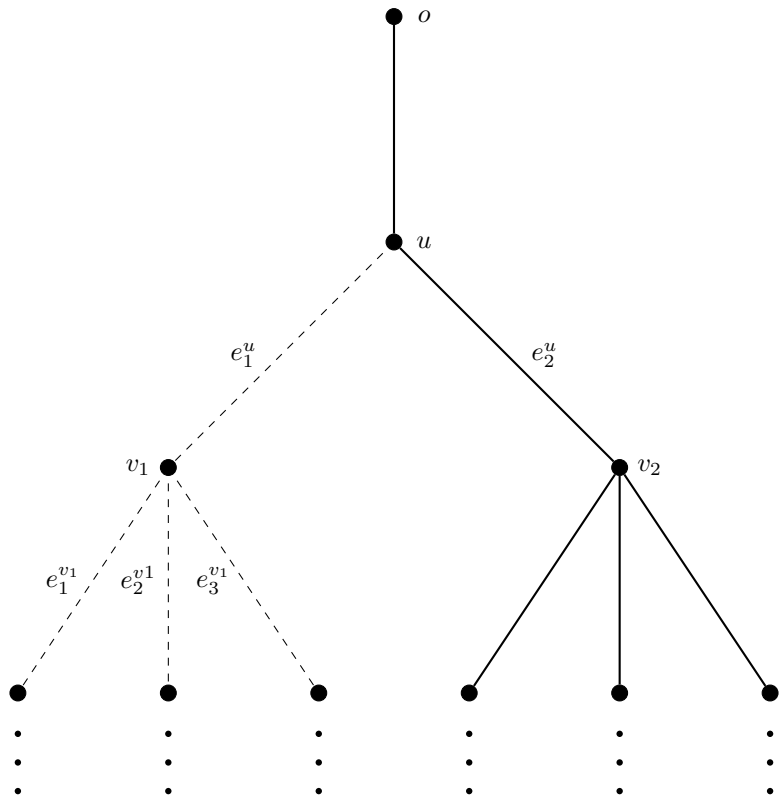
\begin{figure}
	\begin{center}
		\begin{tikzpicture}[scale=2]
		\vertex(o) at (0,0) {};
		\draw(0.2,0) node{$o$};
		\vertex(u) at (0,-1.5) {};
		\draw(0.2,-1.5) node{$u$};
		\vertex(v1) at (-1.5,-3) {};
		\draw(-1.7,-3) node{$v_1$};
		\vertex(v2) at (1.5,-3) {};
		\draw(1.7,-3) node{$v_2$};
		\vertex(w1) at (-2.5,-4.5) {};
		\vertex(w2) at (-1.5,-4.5) {};
		\vertex(w3) at (-0.5,-4.5) {};
		\vertex(w4) at (0.5,-4.5) {};
		\vertex(w5) at (1.5, -4.5) {};
		\vertex(w6) at (2.5, -4.5) {};
		\node[vertex, draw=none, fill=white](dots1) at (-2.5, -5.5) {};
		\node[vertex, draw=none, fill=white](dots2) at (-1.5, -5.5) {};
		\node[vertex, draw=none, fill=white](dots3) at (-0.5, -5.5) {};
		\node[vertex, draw=none, fill=white](dots4) at (0.5, -5.5) {};
		\node[vertex, draw=none, fill=white](dots5) at (1.5, -5.5) {};
		\node[vertex, draw=none, fill=white](dots6) at (2.5, -5.5) {};
		\path (w1) -- (dots1) node [black, font=\Huge, midway, sloped] {$\dots$};
		\path (w2) -- (dots2) node [black, font=\Huge, midway, sloped] {$\dots$};
		\path (w3) -- (dots3) node [black, font=\Huge, midway, sloped] {$\dots$};
		\path (w4) -- (dots4) node [black, font=\Huge, midway, sloped] {$\dots$};
		\path (w5) -- (dots5) node [black, font=\Huge, midway, sloped] {$\dots$};
		\path (w6) -- (dots6) node [black, font=\Huge, midway, sloped] {$\dots$};
		\Edge(o)(u)
		\draw (0,-1.5)--(-1.5,-3)[dashed];
		\draw(-1,-2.25) node{$e_{1}^{u}$};
		\Edge(u)(v2)
		\draw (1,-2.25) node{$e_{2}^{u}$};
		\draw (-1.5,-3)--(-2.5,-4.5)[dashed];
		\draw (-2.2,-3.75) node{$e_{1}^{v_{1}}$};
		\draw (-1.5,-3)--(-1.5,-4.5)[dashed];
		\draw (-1.7, -3.75) node{$e_{2}^{v{1}}$};
		\draw (-1.5,-3)--(-0.5,-4.5)[dashed];
		\draw (-1.2,-3.75) node{$e_{3}^{v_{1}}$};
		\Edge(v2)(w4)
		\Edge(v2)(w5)
		\Edge(v2)(w6);
		\end{tikzpicture}
	\end{center}
	\captionof{figure}{The dashed edges stand for $G_{e_{1}^{u}}$, and the dots signify some radial continuation of the tree.}
	\label{ex_subtree_def}
\end{figure}
Let $\omega$ be the root of unity of order $b(v)$ (i.e.\ $\omega=e^{\frac{2\pi i}{b(v)}}$).
Let $s\in\mathbb{N},\,1\leq s\leq b(v)-1$. Let $M_{v}^{\langle s\rangle}$
be the space of all functions $f\in L^{2}(\Gamma)$ such
 that $supp(f)\subseteq{\underset{k=1}{\overset{b(v)}{\bigcup}}G_{e_{v}^{k}}}$ and there
exists a measurable function $g:\,(t_{gen(v)},\infty)\rightarrow\mathbb{C}$
for which $f(x)=g(|x|)\omega^{js}\iff x\in G_{e_{v}^{j}},\text{ } 1\leq{j}\leq{b(v)}$. Also, denote by $M_{\Gamma}$ the space of all functions in $L^2(\Gamma)$ that are radially symmetric.

The Naimark-Solomyak decomposition is described in the following theorem from \cite{Sol}.

\begin{thm}\emph{(\cite[Theorem 3.2]{Sol})} \label{thm:NS}
Let $\Gamma$ be a regular (radially symmetric and spherically homogeneous) metric tree such that $b(o)=1$. Then the subspaces $M_{v}^{\langle s\rangle},\, v\in{V(\Gamma)},\, b(v)>1,\, 1\leq{s}\leq{b(v)-1}$ are mutually orthogonal and orthogonal to $M_{\Gamma}$. Moreover,
$$L^2(\Gamma)=M_{\Gamma}\oplus\left(\underset{\substack{v\in{V(\Gamma)} \\ b(v)>1 \\ 1\leq{s}\leq{b(v)-1}}}{\oplus}M_{v}^{\langle s\rangle}\right)$$
and this decomposition reduces the Dirichlet-Kirchhoff Laplacian on $\Gamma$.
\end{thm}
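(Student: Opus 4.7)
The plan is to exhibit the Naimark--Solomyak decomposition as a particular instance of the abstract decomposition from Theorem \ref{thm_main}, by specifying the functions $(\phi_0^r)_{r\in\mathbb{N}}$ from Theorem \ref{thm_br} appropriately. Concretely, I would take $\phi_0^0=\delta_o$, and for each vertex $v\in V(\Gamma)$ with $b(v)>1$, letting $w_1,\ldots,w_{b(v)}$ denote its forward neighbors and $\omega=e^{2\pi i/b(v)}$, I would define for each $s\in\{1,\ldots,b(v)-1\}$ the vector $\phi_{v,s}\in\ell^2(S_{gen(v)+1})$ by $\phi_{v,s}(w_j)=\omega^{js}/\sqrt{b(v)}$ and $\phi_{v,s}\equiv 0$ elsewhere.

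The next task is to verify that this family satisfies the hypotheses of Theorem \ref{thm_br}: orthonormality, completeness in $\ell^2(G_\Gamma)$, the support property on a single sphere, and the eigenvector property for every $\Lambda_{n,\pm j}$. Orthonormality within each sphere is clear from orthogonality of characters, and completeness reduces to a dimension count: on $S_n$, the $\phi_{v,s}$ with $v\in S_{n-1}$ span the orthogonal complement of the image of $E_{n-1}$ (functions constant on sibling groups), whose codimension is $\sum_{v\in S_{n-1}}(b(v)-1)=|S_n|-|S_{n-1}|$. The key structural check is the eigenvector condition: for $\Lambda_{n,-j}$, a direct computation gives $E_{n-1}^{T}\phi_{v,s}\equiv 0$ since $\sum_j \omega^{js}=0$, so $\Lambda_{n,-j}\phi_{v,s}=0$; for $\Lambda_{n,+j}$, the tree property forces $(E_{n+j-1}\cdots E_n)\phi_{v,s}$ to vanish on descendants of every $u\in S_n\setminus\{w_1,\ldots,w_{b(v)}\}$ because such descendants lie in subtrees disjoint from those of the $w_i$'s, so $\Lambda_{n,+j}\phi_{v,s}$ is supported on $\{w_1,\ldots,w_{b(v)}\}$, and spherical homogeneity makes the scalar at each $w_j$ the same multiple of $\omega^{js}$.

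Once $(\phi_0^r)$ is identified, the spaces $F_r$ can be computed via \eqref{eq:hphi}. Since $\Gamma$ is a tree, for $x$ lying on $e_v^j$ or in the subtree $G_{e_v^j}$ one has $G_x\cap S_{gen(v)+1}=\{w_j\}$, hence
\[
h^{v,s}(x)=\frac{\omega^{js}}{\sqrt{g_{gen(v)+1}(|x|)\,b(v)}},
\]
and $h^{v,s}$ vanishes outside $\bigcup_j G_{e_v^j}$. Therefore a function $f=P_{\phi_{v,s}}f\in F_{v,s}$ is of the form $f(x)=g(|x|)\,\omega^{js}$ on $G_{e_v^j}$ and vanishes elsewhere, which is exactly the definition of $M_v^{\langle s\rangle}$. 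The same argument with $\phi_0^0=\delta_o$ (so that $h^0$ is a function of $|x|$ only) yields $F_0=M_\Gamma$. Theorem \ref{thm_main} then supplies the mutual orthogonality, completeness of the decomposition in $L^2(\Gamma)$, and the reduction of the Dirichlet--Kirchhoff Laplacian.

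The main obstacle is the eigenvector verification for the forward operators $\Lambda_{n,+j}$: the backward case collapses immediately from the sum-to-zero property, but the forward direction requires combining the disjointness of descendant subtrees (which is where the tree structure is essential) with the uniform multiplicity coming from spherical homogeneity. The rest of the argument is bookkeeping with the formulas for $h^{v,s}$ and $P_{v,s}$.
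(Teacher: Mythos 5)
Your proposal follows essentially the same route as the paper's Section 5.1: it identifies each $M_{v}^{\langle s\rangle}$ with $F_{\phi_{v}^{s}}$ for the root-of-unity vectors $\phi_{v}^{s}$ supported on the forward neighbours of $v$, verifies that these satisfy the hypotheses of Theorem \ref{thm_br} (support on a sphere, the $\Lambda_{n,\pm j}$ eigenvector property, completeness by the local dimension count $\sum_{v\in S_{n-1}}(b(v)-1)=|S_n|-|S_{n-1}|$), and then invokes Theorem \ref{thm_main}. The argument is correct, and in fact spells out the eigenvector verification (the vanishing of $E_{n-1}^{T}\phi_{v,s}$ and the forward computation via disjoint descendant subtrees) in more detail than the paper, which leaves it as following ``from the definition, using in addition the symmetry of the graph.''
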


It is apparent from their description that for every $v\in{V(\Gamma)\setminus{o}}$ and every $1\leq s\leq b(v)$, the space $M_{v}^{\langle s\rangle}$ can be described by attaching scalar coefficients to the tree's edges (see Figure \ref{ex_mvs_weights}) in the same manner shown in Section 4 (Figure \ref{ex_Fr}).
\begin{figure}
	\begin{center}
		\begin{tikzpicture}[scale=2]
		\vertex(o) at (0,0) {};
		\vertex(u) at (0,-1) {};
		\vertex(v1) at (-1.5,-2) {};
		\draw (-1.5,-1.85) node{$v$};
		\vertex(v2) at (1.5,-2) {};
		\vertex(w1) at (-2.5,-3) {};
		\vertex(w2) at (-1.5,-3) {};
		\vertex(w3) at (-0.5,-3) {};
		\vertex(w4) at (0.5,-3) {};
		\vertex(w5) at (1.5,-3) {};
		\vertex(w6) at (2.5,-3) {};
		\vertex(x1) at (-2.8,-4) {};
		\node[vertex, draw=none, fill=white](dots1) at (-2.8, -4.5) {};
		\path (x1) -- (dots1) node [black, font=\Huge, midway, sloped] {$\dots$};
		\vertex(x2) at  (-2.2,-4) {};
		\node[vertex, draw=none, fill=white](dots2) at (-2.2, -4.5) {};
		\path (x2) -- (dots2) node [black, font=\Huge, midway, sloped] {$\dots$};
		\vertex(x3) at (-1.8,-4) {};
		\node[vertex, draw=none, fill=white](dots3) at (-1.8, -4.5) {};
		\path (x3) -- (dots3) node [black, font=\Huge, midway, sloped] {$\dots$};
		\vertex(x4) at (-1.2,-4) {};
		\node[vertex, draw=none, fill=white](dots4) at (-1.2, -4.5) {};
		\path (x4) -- (dots4) node [black, font=\Huge, midway, sloped] {$\dots$};
		\vertex(x5) at (-0.8,-4) {};
		\node[vertex, draw=none, fill=white](dots5) at (-0.8, -4.5) {};
		\path (x5) -- (dots5) node [black, font=\Huge, midway, sloped] {$\dots$};
		\vertex(x6) at (-0.2,-4) {};
		\node[vertex, draw=none, fill=white](dots6) at (-0.2, -4.5) {};
		\path (x6) -- (dots6) node [black, font=\Huge, midway, sloped] {$\dots$};
		\vertex(x7) at (0.2,-4) {};
		\node[vertex, draw=none, fill=white](dots7) at (0.2, -4.5) {};
		\path (x7) -- (dots7) node [black, font=\Huge, midway, sloped] {$\dots$};
		\vertex(x8) at (0.8,-4) {};
		\node[vertex, draw=none, fill=white](dots8) at (0.8, -4.5) {};
		\path (x8) -- (dots8) node [black, font=\Huge, midway, sloped] {$\dots$};
		\vertex(x9) at (1.2,-4) {};
		\node[vertex, draw=none, fill=white](dots9) at (1.2, -4.5) {};
		\path (x9) -- (dots9) node [black, font=\Huge, midway, sloped] {$\dots$};
		\vertex(x10) at (1.8,-4) {};
		\node[vertex, draw=none, fill=white](dots10) at (1.8, -4.5) {};
		\path (x10) -- (dots10) node [black, font=\Huge, midway, sloped] {$\dots$};
		\vertex(x11) at (2.2,-4) {};
		\node[vertex, draw=none, fill=white](dots11) at (2.2, -4.5) {};
		\path (x11) -- (dots11) node [black, font=\Huge, midway, sloped] {$\dots$};
		\vertex(x12) at (2.8,-4) {};
		\node[vertex, draw=none, fill=white](dots12) at (2.8, -4.5) {};
		\path (x12) -- (dots12) node [black, font=\Huge, midway, sloped] {$\dots$};
		\Edge(o)(u)
		\Edge(u)(v1)
		\Edge(u)(v2)
		\Edge(v1)(w1)
		\draw (-2.2,-2.5) node{$\frac{\omega}{\sqrt{3}}$};
		\Edge(v1)(w2)
		\draw (-1.65,-2.5) node{$\frac{\omega^2}{\sqrt{3}}$};
		\Edge(v1)(w3)
		\draw (-0.8,-2.5) node{$\frac{\omega^3}{\sqrt{3}}$};
		\Edge(v2)(w4)
		\Edge(v2)(w5)
		\Edge(v2)(w6)
		\Edge(w1)(x1)
		\draw (-2.8,-3.5) node{$\frac{\omega}{\sqrt{6}}$};
		\Edge(w1)(x2)
		\draw (-2.2,-3.5) node{$\frac{\omega}{\sqrt{6}}$};
		\Edge(w2)(x3)
		\draw (-1.8,-3.5) node{$\frac{\omega^2}{\sqrt{6}}$};
		\Edge(w2)(x4)
		\draw (-1.2,-3.5) node{$\frac{\omega^2}{\sqrt{6}}$};
		\Edge(w3)(x5)
		\draw (-0.8,-3.5) node{$\frac{\omega^3}{\sqrt{6}}$};
		\Edge(w3)(x6)
		\draw (-0.2,-3.5) node{$\frac{\omega^3}{\sqrt{6}}$};
		\Edge(w4)(x7)
		\Edge(w4)(x8)
		\Edge(w5)(x9)
		\Edge(w5)(x10)
		\Edge(w6)(x11)
		\Edge(w6)(x12);
		\vertex(o') at (0,-6) {};
		\vertex(u') at (0,-7) {};
		\vertex(v1') at (-1.5,-8) {};
		\draw (-1.5,-7.85) node{$v$};
		\vertex(v2') at (1.5,-8) {};
		\vertex(w1') at (-2.5,-9) {};
		\vertex(w2') at (-1.5,-9) {};
		\vertex(w3') at (-0.5,-9) {};
		\vertex(w4') at (0.5,-9) {};
		\vertex(w5') at (1.5,-9) {};
		\vertex(w6') at (2.5,-9) {};
		\vertex(x1') at (-2.8,-10) {};
		\node[vertex, draw=none, fill=white](dots1') at (-2.8,-10.5) {};
		\path (x1') -- (dots1') node [black, font=\Huge, midway, sloped] {$\dots$};
		\vertex(x2') at  (-2.2,-10) {};
		\node[vertex, draw=none, fill=white](dots2') at (-2.2,-10.5) {};
		\path (x2') -- (dots2') node [black, font=\Huge, midway, sloped] {$\dots$};
		\vertex(x3') at (-1.8,-10) {};
		\node[vertex, draw=none, fill=white](dots3') at (-1.8,-10.5) {};
		\path (x3') -- (dots3') node [black, font=\Huge, midway, sloped] {$\dots$};
		\vertex(x4') at (-1.2,-10) {};
		\node[vertex, draw=none, fill=white](dots4') at (-1.2,-10.5) {};
		\path (x4') -- (dots4') node [black, font=\Huge, midway, sloped] {$\dots$};
		\vertex(x5') at (-0.8,-10) {};
		\node[vertex, draw=none, fill=white](dots5') at (-0.8,-10.5) {};
		\path (x5') -- (dots5') node [black, font=\Huge, midway, sloped] {$\dots$};
		\vertex(x6') at (-0.2,-10) {};
		\node[vertex, draw=none, fill=white](dots6') at (-0.2,-10.5) {};
		\path (x6') -- (dots6') node [black, font=\Huge, midway, sloped] {$\dots$};
		\vertex(x7') at (0.2,-10) {};
		\node[vertex, draw=none, fill=white](dots7') at (0.2,-10.5) {};
		\path (x7') -- (dots7') node [black, font=\Huge, midway, sloped] {$\dots$};
		\vertex(x8') at (0.8,-10) {};
		\node[vertex, draw=none, fill=white](dots8') at (0.8,-10.5) {};
		\path (x8') -- (dots8') node [black, font=\Huge, midway, sloped] {$\dots$};
		\vertex(x9') at (1.2,-10) {};
		\node[vertex, draw=none, fill=white](dots9') at (1.2,-10.5) {};
		\path (x9') -- (dots9') node [black, font=\Huge, midway, sloped] {$\dots$};
		\vertex(x10') at (1.8,-10) {};
		\node[vertex, draw=none, fill=white](dots10') at (1.8,-10.5) {};
		\path (x10') -- (dots10') node [black, font=\Huge, midway, sloped] {$\dots$};
		\vertex(x11') at (2.2,-10) {};
		\node[vertex, draw=none, fill=white](dots11') at (2.2,-10.5) {};
		\path (x11') -- (dots11') node [black, font=\Huge, midway, sloped] {$\dots$};
		\vertex(x12') at (2.8,-10) {};
		\node[vertex, draw=none, fill=white](dots12') at (2.8,-10.5) {};
		\path (x12') -- (dots12') node [black, font=\Huge, midway, sloped] {$\dots$};
		\Edge(o')(u')
		\Edge(u')(v1')
		\Edge(u')(v2')
		\Edge(v1')(w1')
		\draw (-2.2,-8.5) node{$\frac{\omega^2}{\sqrt{3}}$};
		\Edge(v1')(w2')
		\draw (-1.65,-8.5) node{$\frac{\omega^4}{\sqrt{3}}$};
		\Edge(v1')(w3')
		\draw (-0.8,-8.5) node{$\frac{\omega^6}{\sqrt{3}}$};
		\Edge(v2')(w4')
		\Edge(v2')(w5')
		\Edge(v2')(w6')
		\Edge(w1')(x1')
		\draw (-2.8,-9.5) node{$\frac{\omega^2}{\sqrt{6}}$};
		\Edge(w1')(x2')
		\draw (-2.2,-9.5) node{$\frac{\omega^2}{\sqrt{6}}$};
		\Edge(w2')(x3')
		\draw (-1.8,-9.5) node{$\frac{\omega^4}{\sqrt{6}}$};
		\Edge(w2')(x4')
		\draw (-1.2,-9.5) node{$\frac{\omega^4}{\sqrt{6}}$};
		\Edge(w3')(x5')
		\draw (-0.8,-9.5) node{$\frac{\omega^6}{\sqrt{6}}$};
		\Edge(w3')(x6')
		\draw (-0.2,-9.5) node{$\frac{\omega^6}{\sqrt{6}}$};
		\Edge(w4')(x7')
		\Edge(w4')(x8')
		\Edge(w5')(x9')
		\Edge(w5')(x10')
		\Edge(w6')(x11')
		\Edge(w6')(x12');
		\end{tikzpicture}
	\end{center}
	\captionof{figure}{An example of the scalar coefficients defining the spaces $M_{v}^{s}$ for $s=1,2$ (the upper graph is for $s=1$ and the lower one is for $s=2$). Functions in these spaces are supported only on the subtree whose root is $v$.}
	\label{ex_mvs_weights}
\end{figure}
In order to show that the above decomposition is a special case of the decomposition we describe here, we first associate with each space $M_{v}^{\langle s\rangle}$ a function defined on the associated discrete graph and supported on a sphere.

\begin{lem}
For every $v\in V(\Gamma)$, and every $s=1,\ldots,b(v)-1$, the space
$M_{v}^{\langle s\rangle}$ is equal to the spaces $F_{\phi_{v}^{s}}$,
where $\phi_{v}^{s}$ is defined in the following way:
\[
\phi_{v}^{s}(u)=\begin{cases}
\frac{\omega^{js}}{\sqrt{b(v)}} & e_{v}^{j}=(v,u)\\
0 & else
\end{cases}
\]
where $\omega=e^{\frac{2\pi i}{b(v)}}$.
\end{lem}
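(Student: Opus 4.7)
The plan is to compute $h^{\phi_v^s}$ directly from its defining formula (2.4) and then invoke the representation of elements of $F_{\phi_v^s}$ given by (4.1) in Section 4 to identify the two spaces. Setting $n := gen(v)+1$, note that $\phi_v^s$ is supported on the sphere $S_n$, and specifically on the forward neighbors $u_1,\ldots,u_{b(v)}$ of $v$. The key step is a case analysis of $G_x \cap S_n$ as $x$ ranges over $\Gamma$; since $\Gamma$ is a tree, this set takes one of three forms. If $x \in G_{e_v^j}$, i.e.\ $x$ lies on the edge $e_v^j$ itself or strictly below $u_j$, then the only element of $G_x \cap S_n$ that sits in the support of $\phi_v^s$ is $u_j$, and $G_x \cap \mathrm{supp}(\phi_v^s) = \{u_j\}$. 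If $x$ lies strictly between $o$ and $v$ on the ancestor chain of $v$, then all of $u_1,\ldots,u_{b(v)}$ are descendants of $x$ and hence contained in $G_x$. Otherwise $G_x$ contains none of the $u_j$'s.

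Substituting into (2.4), the first case yields $h^{\phi_v^s}(x) = \omega^{js}/\sqrt{b(v)\, g_n(|x|)}$; the second case produces $h^{\phi_v^s}(x)$ proportional to $\sum_{j=1}^{b(v)} \omega^{js} = 0$, since $\omega$ is a primitive $b(v)$-th root of unity and $1 \le s \le b(v)-1$ (this is the only point where the restriction on $s$ enters, and it precisely reflects the orthogonality of $\phi_v^s$ to the spherically symmetric functions on $S_n$); the third case is trivially zero. Consequently $\mathrm{supp}(h^{\phi_v^s}) \subseteq \bigcup_{j=1}^{b(v)} G_{e_v^j}$, and on each $G_{e_v^j}$ the function $h^{\phi_v^s}$ factors as $\omega^{js}\, c(|x|)$ with $c(t) := 1/\sqrt{b(v)\, g_n(t)}$ a strictly positive function depending only on $|x|$.

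With $h^{\phi_v^s}$ in this explicit form, both inclusions will follow by direct substitution. Any $f \in F_{\phi_v^s}$ has the form $\varphi_f(|x|)\, h^{\phi_v^s}(x)$ by (4.1), so $f$ vanishes off $\bigcup_j G_{e_v^j}$ and equals $g(|x|)\, \omega^{js}$ on $G_{e_v^j}$ with $g(t) := \varphi_f(t) c(t)$, placing $f$ in $M_v^{\langle s\rangle}$. Conversely, given $f \in M_v^{\langle s\rangle}$ with defining radial function $g$, setting $\varphi(t) := g(t)/c(t)$ gives $\varphi(|x|)\, h^{\phi_v^s}(x) = f(x)$, and the $L^2$ condition on $g$ translates exactly into square integrability of the scalar multiplier since $\|h_t^{\phi_v^s}\|_{\mathbb{C}^{g_\Gamma(t)}} = 1$ for almost every $t$ in the support. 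The only nontrivial step in the whole argument is the case analysis above; the hypothesis $1 \le s \le b(v)-1$ is used exclusively to make the character sum in case two vanish.
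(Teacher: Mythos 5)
Your proof is correct and follows essentially the same route as the paper's: compute $h^{\phi_v^s}$ explicitly using the fact that on a tree $G_x\cap S_{gen(v)+1}$ is the singleton $\{u_j\}$ for $x\in G_{e_v^j}$, and then verify both inclusions by identifying elements of $F_{\phi_v^s}$ with radial multiples of $h^{\phi_v^s}$. The only difference is cosmetic: you spell out why $h^{\phi_v^s}$ vanishes on the ancestor chain of $v$ (the character sum $\sum_j\omega^{js}=0$ for $1\le s\le b(v)-1$), a point the paper's proof leaves implicit in the assertion that $h^{\phi_v^s}(x)=0$ off $\bigcup_j G_{e_v^j}$.
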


\begin{proof}
We first show that $M_{v}^{\langle s\rangle}\subseteq{F_{\phi_{v}^{s}}}$. Let $f\in M_{v}^{\langle s\rangle}$ and let $g:(t_{gen(v)},\infty)\rightarrow{\mathbb{C}}$ such that $f(x)=g(|x|)\omega^{js}\iff{x\in{G_{e_{j}^{v}}}}$. Also denote $k=gen(v)+1$. It is sufficient to show that $P_{\phi_{v}^{s}}(f)=f$. Let $x\in{\Gamma}$. Assume first that $x\in{G_{e_{v}^{j}}}$ for some $1\leq{j}\leq{b(v)}$ and that $x\neq{v}$. The fact that $\Gamma$ is a tree implies that $G_x\cap{S_{gen(v)+1}}=\{u\}$ where $x\in{G_{(v,u)}}$. Thus, $h^{\phi_{v}^{s}}(x)=\frac{\omega^{js}}{\sqrt{g_k(|x|)b(v)}}$. A direct computation shows that $\left\langle{f_{|x|}},{h^{\phi_{v}^{s}}_{|x|}}\right\rangle=g(|x|)\sqrt{g_k(|x|)b(v)}$ which means that $P_{\phi_{v}^{s}}(f)(x)=g(|x|)\omega^{js}=f(x)$. for $x\notin{G_{e_{v}^{j}}}$ for every $1\leq{j}\leq{b(v)}$, $h^{\phi_{v}^{s}}(x)=0$ which means that $P_{\phi_{v}^{s}}(f)(x)=0$.

The inclusion ${F_{\phi_{v}^{s}}}\subseteq M_{v}^{\langle s\rangle}$ easily follows by defining $g(|x|)=\frac{1}{\sqrt{g_k(|x|)b(v)}}\left\langle{f_{|x|}},{h^{\phi_{v}^{s}}_{|x|}}\right\rangle$ for $f \in{F_{\phi_{v}^{s}}}$.
\end{proof}

We conclude our demonstration by showing that the functions $\left\{ \phi_v^s \right \}$ satisfy the properties of $\left\{\phi_{0}^{r} \right\}$ in Theorem \ref{thm_br}. Thus the decomposition described in Theorem \ref{thm:NS} is indeed a particular case of the decomposition described in Theorem \ref{thm_main}.

For every $v\in{V(\Gamma)}$ such that $b(v)>1$ and $1\leq{s}\leq{b(v)-1}$, denote $H_{v}^{s}\coloneqq{\overline{sp\left\{\phi_{v}^{s},\Delta_{d}(\phi_{v}^{s}),\ldots\right\}}}$, $H_0\coloneqq{\overline{sp\left\{\delta_0,\Delta_{d}(\delta_0),\ldots\right\}}}$. Also, denote $V'=\left\{v\in{V(\Gamma)\setminus\{o\}} : b(v)>1\right\}$.
\begin{thm} \label{thm_reduct}
	Let $\Gamma$ be a radially symmetric tree. Then $\ell^2(V(\Gamma))=H_0\oplus\left(\underset{v\in{V'}}{\oplus} H_{v}^{s}\right)$. Furthermore, for every $v\in{V'}$ and $1\leq{s}\leq{b(v)-1}$,
	
	$(i)$ There exists $n(v,s)\in{\mathbb{N}}$ such that $supp\left(\phi_{v}^{s}\right)\subseteq{S_{n(v,s)}}$.
	
	$(ii)$ The set $\left\{\phi_{0}^{v,s},\phi_{1}^{v,s},\ldots\right\}$ obtained from $\left\{\phi_{v}^{s},\Delta_{d}(\phi_{v}^{s}),\ldots\right\}$ by applying the Gram-Schmidt process has the property that $supp\left(\phi_{k}^{v,s}\right)\subseteq{S_{n(v,s)+k}}$.
	
	$(iii)$ For every $j\in{\mathbb{N}}$, $\phi_{0}^{v,s}$ is an eigenvector of $\Lambda_{n(v,s),\pm{j}}$.
\end{thm}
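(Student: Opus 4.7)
My plan is to verify the three enumerated properties of $\phi_v^s$ in sequence and then establish the direct sum decomposition separately. Property $(i)$ is immediate: by construction, $\phi_v^s$ is supported on the children of $v$, all of which lie in $S_{gen(v)+1}$, so we may set $n(v,s)=gen(v)+1$.

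For property $(iii)$ I would handle the two families separately. Because $\Gamma$ is a tree, each vertex has a unique parent, so Lemmas \ref{En} and \ref{EnT} give, for $\varphi\in\ell^2(S_n)$ and $u\in S_n$,
\[
\Lambda_{n,+j}\varphi(u)=(\#\text{descendants of }u\text{ in }S_{n+j+1})\cdot\varphi(u),
\]
and by radial symmetry this multiplicity depends only on $n$ and $j$. Hence $\Lambda_{n,+j}$ is a scalar multiple of the identity on $\ell^2(S_n)$, and $\phi_v^s$ is automatically an eigenvector. For $\Lambda_{n,-j}$ with $j\geq 1$ I would establish the stronger identity $E_{n-1}^T\phi_v^s=0$: for $u\in S_{n-1}\setminus\{v\}$ no forward neighbor of $u$ lies in the support of $\phi_v^s$, so the defining sum vanishes, while for $u=v$ the sum equals $\frac{1}{\sqrt{b(v)}}\sum_{j=1}^{b(v)}\omega^{js}$, which is zero by the discrete Fourier identity since $1\leq s\leq b(v)-1$. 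This forces $\Lambda_{n,-j}\phi_v^s=0$ for every $j\geq 1$.

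For property $(ii)$ I would invoke Remark \ref{remark_Br}, which tells us that whenever $\phi_v^s$ plays the role of $\phi_0^r$ the Gram-Schmidt output $\phi_k^{v,s}$ is proportional to $(\Delta_d^k\phi_v^s)|_{S_{n(v,s)+k}}$, so it suffices to verify that $\Delta_d^k\phi_v^s$ vanishes on $S_m$ for every $m<n(v,s)$. This is done by a joint induction on $k$ using the pointwise decomposition $\Delta_d\varphi|_{S_{k-1}}=-E_{k-1}^T\varphi$, $\Delta_d\varphi|_{S_k}=d_k\varphi$, $\Delta_d\varphi|_{S_{k+1}}=-E_k\varphi$ for $\varphi\in\ell^2(S_k)$, together with the tree identity $E_m^TE_m=b_m I$. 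One shows simultaneously that (A) $\Delta_d^k\phi_v^s$ vanishes below $S_{n(v,s)}$, and (B) each $(\Delta_d^k\phi_v^s)|_{S_{n+j}}$ is a scalar multiple of $E_{n+j-1}\cdots E_n\phi_v^s$; the critical use of (B) is in the inductive step for (A), where the $S_{n-1}$-component of $\Delta_d^{k+1}\phi_v^s$ is $-E_{n-1}^T$ applied to $(\Delta_d^k\phi_v^s)|_{S_n}$, a scalar multiple of the vanishing vector $E_{n-1}^T\phi_v^s$.

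Finally, for the decomposition $\ell^2(V(\Gamma))=H_0\oplus\bigoplus_{v\in V',\,1\leq s\leq b(v)-1}H_v^s$, I would prove orthogonality and completeness separately. Orthogonality: $H_0$ lies in the radial subspace (each $\Delta_d^k\delta_o$ is constant on spheres by symmetry), and $\phi_v^s$ is orthogonal to every radial function since $\sum_u\phi_v^s(u)=0$; self-adjointness of $\Delta_d$ and invariance then give $H_v^s\perp H_0$. For $H_v^s\perp H_{v'}^{s'}$ with $(v,s)\neq(v',s')$, if neither of $v,v'$ is a descendant of the other then $G_v\cap G_{v'}=\emptyset$, and a short induction using the above decomposition of $\Delta_d$ shows $H_v^s\subseteq\ell^2(G_v)$, so supports are disjoint; otherwise, say $v'$ is a strict descendant of $v$, the rooted graph automorphism that cyclically permutes the children of $v'$ fixes every element of $G_v$ outside the subtree below $v'$, hence fixes $\phi_v^s$ and therefore $\Delta_d^m\phi_v^s$, which is consequently constant on the children of $v'$, and the DFT identity for $\phi_{v'}^{s'}$ forces the inner product to vanish (the case $v=v'$, $s\neq s'$ is handled the same way). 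Completeness is a dimension count on each $S_m$:
\[
1+\sum_{k<m}\sum_{v\in S_k,\,b(v)>1}(b(v)-1)=1+\sum_{k<m}\bigl(|S_{k+1}|-|S_k|\bigr)=|S_m|,
\]
using $b(o)=1$ (the standing assumption inherited from Theorem \ref{thm:NS}). The main obstacle is the joint induction for $(ii)$, where the two bookkeeping statements feed into each other; everything else reduces to the unique-parent structure of the tree and repeated use of $\sum_j\omega^{js}=0$.
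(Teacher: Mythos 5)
Your argument is correct and follows the same route the paper intends --- parts (i)--(iii) from the definition of $\phi_v^s$ together with the tree's unique-parent structure and the identity $\sum_j\omega^{js}=0$, orthogonality by an elementary computation, and completeness by local dimension counting --- except that the paper leaves all of these steps to the reader, so your write-up actually supplies the missing details (including the correct observation that the standing assumption $b(o)=1$ is what makes the dimension count close with $V'$ excluding the root). The only blemishes are cosmetic: the appeal to Remark \ref{remark_Br} in part (ii) is superfluous, since your joint induction (A)+(B) already yields the support claim directly, and in the disjoint-support case you should refer to the strict-descendant set $A_v\setminus\{v\}$ rather than $G_v$, which in the paper's notation also contains the ancestors of $v$.
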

\begin{proof}
	Part $(i)$ follows directly from the definition of $\phi_{v}^{s}$. Parts $(ii)$ and $(iii)$ also follow from the definition of $\phi_{v}^{s}$, using in addition the symmetry of the graph. The mutual orthogonality of the family of spaces $H_0$, $H_{v}^{s}$ $\left(v \in V'\right)$ is an elementary but somewhat tedious computation and the fact that $\ell^2(V(\Gamma))=H_0\oplus\left(\underset{v\in{V'}}{\oplus} H_{v}^{s}\right)$ is a simple local dimension counting.
\end{proof}

\begin{rem}
The procedure of obtaining the functions $\phi_0^r$ of Theorem \ref{thm_br} is a constructive procedure which proceeds by constructing cyclic subspaces recursively. One starts with the cyclic space spanned by the delta function at the root and at each step one chooses a function that is orthogonal to all the previously constructed cyclic subspaces and supported on a sphere not yet `covered' by the procedure. Thus, there often is some freedom in the choice of the functions $\phi_0^r$. Remarkably, though, due to the symmetry of family preserving graphs, as long as these functions are chosen as eigenfunctions of the appropriate operators (the $\Lambda_{n,\pm{j}}$) the associated Jacobi matrices do not depend on the choice of these functions.

Similarly, in the associated metric case covered by Theorem \ref{thm_main}, the one-dimensional operators obtained in the decomposition are independent of the choice of the $\phi_0^r$ described in Theorem \ref{thm_br}. This can be seen from the fact that these one-dimensional operators as described in Section 4 above, do not depend on the choice of $\phi_0^r$.

Thus, while the discussion above shows that the method of decomposition of \cite{NS, Sol} can be realized by a particular choice of $\phi_0^r$ we actually obtain the somewhat more interesting result that the decomposition itself is independent of the particular choice of these functions (among the possibilities allowed by Theorem \ref{thm_br}).
\end{rem}


\subsection{Antitrees}

An antitree is a rooted graph, $G$, with all possible edges between any two neighboring spheres (and no other edges). Formally, a rooted graph
is an antitree if $\forall n\in\mathbb{N},\ \forall u\in S_{n}, \ \forall v\in S_{n+1},\,(u,v)\in E$.
In a sense, an antitree is a bipartite antithesis to a tree, as it has all available cycles between vertices of different generations. Works
exploiting this structure in the context of spectral properties of the Laplace operator include \cite{GHM, KLW, Woj}. The Anderson model on
antitrees has been studied in \cite{Sa, Sa1}.

It is shown in \cite{BrKe} that antitrees are family preserving.  A metric graph, $\Gamma$, is called an antitree if $G_\Gamma$ is an
antitree. In this subsection we would like to demonstrate the applicability of our method by applying it to spherically homogeneous metric antitrees. Thus, let $\Gamma$ be a spherically homogeneous metric antitree.

Antitrees are (generally) not locally balanced, as for every $n\in\mathbb{N}$ we have $|S_{n}|\cdot|S_{n+1}|$
edges of generation $n$ (which, if $|S_{n}|,|S_{n+1}|\geq2$, is greater than both $|S_n|$ and $|S_{n+1}|$). Thus, given an antitree $\Gamma$,
we consider
its unitarily equivalent graph $\widetilde{\Gamma}$ which is obtained
by adding vertices where necessary. For simplicity of notation, we assume here that
for every $n\geq1$, $|S_{n}|\geq2$ (note that $|S_0|=1$ since $S_0=\{\delta_o\}$--the root). The analysis of the general case is not
fundamentally different, but more cumbersome to describe.

Write $V(\widetilde{\Gamma})=V(\Gamma)\cup\widetilde{V}$,
and denote by $(\rho_{n})_{n\in\mathbb{N}}$ the set of indices
for which $S_{\rho_{n}}\subseteq V(\Gamma)$ (note that $\rho_{1}=0$, $\rho_{2}=1$,
$\rho_{3}=3$ etc.). Also, denote by $(\tilde{\rho}_{n})_{n\in\mathbb{N}}$ the
set of indices for which $S_{\tilde{\rho}_{n}}\subseteq\widetilde{V}$ (here, $\tilde{\rho}_{1}=2$,
$\tilde{\rho}_{2}=4$ etc.). Finally, let $\widetilde{G}$ be the discrete structure
of $\widetilde{\Gamma}$ and let $(H_{r})_{r=0}^{\infty}$ be the decomposition
of $\ell^{2}(\widetilde{G})$ described in Theorem \ref{thm_br}, such that $H_{0}=\overline{span\left\{
\delta_{o},\Delta_{d}(\delta_{o}),\Delta^{2}_{d}(\delta_{o}),\ldots\right\} }$
and for every $r\in\mathbb{N}$, $H_{r}=\overline{span\left\{ \phi_{0}^{r},\Delta_{d}(\phi_{0}^{r}),\Delta^{2}_{d}(\phi_{0}^{r}),\ldots\right\}
}$
for $\phi_{0}^{r}\in\ell^{2}(S_{n(r)})$ (i.e.\ $n(r)$ denotes the
sphere on which $\phi_{0}^{r}$ is supported). Recall that for every
$r\in\mathbb{N}$, $(\phi_{k}^{r})_{k=0}^{\infty}$ is the set obtained
by applying the Gram-Schmidt process on $\left(\Delta^{k}(\phi_{0}^{r})\right)_{k=0}^{\infty}$. We want to describe the decomposition of
$\Delta$ on $\widetilde{\Gamma}$. In order to do this, we first need to focus on the corresponding discrete decomposition.

\begin{claim}
	For every $n\geq3$ and for every $r\in\mathbb{N}$, $\rho_n \neq n(r)$.
\end{claim}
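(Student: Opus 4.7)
The plan is to show that by the time the decomposition procedure of Theorem~\ref{thm_br} would reach the sphere $S_{\rho_n}$ (for $n\geq 3$), the subspace $\ell^2(S_{\rho_n})$ is already contained in the accumulated space $W$ of previously constructed cyclic subspaces. This will directly yield $\rho_n\neq n(r)$ for every $r$.

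The key structural input is the description of the dummy sphere $S_{\tilde{\rho}_{n-2}}$ that immediately precedes $S_{\rho_n}$ (for $n\geq 3$ one has $\tilde{\rho}_{n-2}=\rho_n-1$). Each vertex of $S_{\tilde{\rho}_{n-2}}$ arose by subdividing an edge of the original antitree between $S_{\rho_{n-1}}$ and $S_{\rho_n}$, so $S_{\tilde{\rho}_{n-2}}$ is naturally indexed by pairs $w_{u,v}$ with $u\in S_{\rho_{n-1}}$, $v\in S_{\rho_n}$, and the only neighbors of $w_{u,v}$ in $\widetilde{\Gamma}$ are $u$ and $v$. Consequently, for any $\phi\in\ell^2(S_{\tilde{\rho}_{n-2}})$,
\[
(\Delta_d\phi)(v) = -\sum_{u\in S_{\rho_{n-1}}}\phi(w_{u,v}), \qquad v\in S_{\rho_n},
\]
while the rest of $\Delta_d\phi$ is supported on $S_{\rho_{n-1}}\cup S_{\tilde{\rho}_{n-2}}$.

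The remainder of the argument is essentially formal, combining two facts: (a) after processing sphere $S_j$ the algorithm ensures $\ell^2(S_k)\subseteq W$ for every $k\leq j$, and (b) being a direct sum of $\Delta_d$-cyclic subspaces, $W$ is $\Delta_d$-invariant. Suppose $\rho_n=n(r)$ for some $r$; then just before $S_{\rho_n}$ would be processed, both $\ell^2(S_{\rho_{n-1}})$ and $\ell^2(S_{\tilde{\rho}_{n-2}})$ are already contained in $W$ by (a). Given any $g\in\ell^2(S_{\rho_n})$, set $\phi(w_{u,v})=-g(v)/|S_{\rho_{n-1}}|$; then $\phi\in W$, so $\Delta_d\phi\in W$ by (b), and subtracting the restrictions $(\Delta_d\phi)|_{S_{\rho_{n-1}}}$ and $(\Delta_d\phi)|_{S_{\tilde{\rho}_{n-2}}}$ (each in $W$ by (a)) leaves $(\Delta_d\phi)|_{S_{\rho_n}}=g\in W$. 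Since $g$ was arbitrary, $\ell^2(S_{\rho_n})\subseteq W$, contradicting the assumption. The step I expect to require the most care is the combinatorial verification of the product-adjacency structure of $S_{\tilde{\rho}_{n-2}}$, on which the surjectivity of the lift $g\mapsto\phi$ rests; once that is spelled out, the rest is immediate from $\Delta_d$-invariance of $W$ and the fact that lower-indexed spheres are absorbed into $W$ in natural order.
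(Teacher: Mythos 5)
Your argument is correct, but it takes a genuinely different route from the paper's. The paper argues by direct contradiction at the level of a single generator: if some $\phi_0^r$ were supported on $S_{\rho_n}$, then any dummy vertex $v\in S_{\rho_n-1}$ adjacent to a $w$ with $\phi_0^r(w)\neq 0$ would satisfy $\Delta_d\phi_0^r(v)=-\phi_0^r(w)\neq 0$ (since $v$ has degree $2$ and its other neighbor lies in $S_{\rho_{n-1}}$, off the support), contradicting the fact from Theorem \ref{thm_br}$(ii)$ that every element of $H_r$ vanishes on $S_{n(r)-1}$. That is a two-line local obstruction. You instead show that $\ell^2(S_{\rho_n})$ is already absorbed into the accumulated space $W$ before the algorithm reaches it, by exhibiting for each $g\in\ell^2(S_{\rho_n})$ an explicit lift $\phi$ on the dummy sphere with $(\Delta_d\phi)|_{S_{\rho_n}}=g$; both proofs ultimately exploit the same structural feature (each dummy vertex has a unique forward neighbor), but yours uses it to build a surjection $\ell^2(S_{\tilde\rho_{n-2}})\to\ell^2(S_{\rho_n})$ while the paper uses it to produce a backward leak. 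Your route buys slightly more --- it essentially establishes the content of the paper's Corollary 5.3 (that $\ell^2(S_{\rho_n})$ is spanned by cyclic subspaces rooted on earlier spheres) in one stroke --- at the cost of invoking the algorithmic description of the decomposition and the $\Delta_d$-invariance of $W$. That invariance step deserves one more sentence: it holds because only finitely many $H_r$ meet the finite-dimensional space $\ell^2(S_{\tilde\rho_{n-2}})$, each such intersection is spanned by a finitely supported $\phi_k^r$, and $\Delta_d\phi_k^r\in\mathrm{span}\{\Delta_d\phi_0^r,\ldots,\Delta_d^{k+1}\phi_0^r\}\subseteq H_r$. With that spelled out, the proof is complete.
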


\begin{proof}
	By Theorem \ref{thm_br}, for every $\phi\in H_{r}$ we have that $supp(\phi)\cap S_{n(r)-1}=\emptyset$.
	Assume that there exists $n\geq3$ and $r\in\mathbb{N}$ such that
	$n(r)=\rho_{n}$. Note that, as the sphere $S_{\rho_{n}-1}$ consists of
	added vertices, every $v\in S_{\rho_{n}-1}$ is connected to exactly
	one $w\in S_{\rho_{n}}$. Thus, for $w\in S_{\rho_{n}}$ such that $\phi_{0}^{r}(w)\neq0$
	and a neighbor of $w$, $v\in S_{\rho_{n}-1}$, $\Delta_d(\phi_{0}^{r})(v)=-\phi_{0}^{r}(w)$
	which is a contradiction.
\end{proof}
\begin{claim}
	For every $3\leq n\in\mathbb{N}$ and for every $r\in\mathbb{N}$
	such that $0\leq n(r)\leq \rho_{n}-2$, $H_{r}\cap\ell^{2}(S_{\rho_{n}})=\emptyset$.
\end{claim}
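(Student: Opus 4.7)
The plan is to reduce the claim, via Theorem \ref{thm_br}, to showing that a single basis element $\phi_{k_0}^r$ of $H_r$ vanishes, and to extract this vanishing from the fact that the last two operators in the factorization of $\Delta_d^{k_0}\phi_0^r$ force a constant function on $S_{\rho_n}$, which collides with orthogonality to $H_0$.

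First I would take $\phi\in H_r\cap \ell^2(S_{\rho_n})$ and appeal to Theorem \ref{thm_br}$(ii)$, which gives an orthonormal basis $\{\phi_k^r\}_{k\geq 0}$ of $H_r$ with $supp(\phi_k^r)\subseteq S_{n(r)+k}$. Since these supports are pairwise disjoint and $\phi$ lives on $S_{\rho_n}$, expansion in this basis leaves at most one non-zero coefficient: $\phi=c\,\phi_{k_0}^r$ with $k_0\coloneqq \rho_n-n(r)\geq 2$. By Remark \ref{remark_Br} together with Remark \ref{remark_En}, $\phi_{k_0}^r$ is a scalar multiple of
\[
(E_{\rho_n-1}E_{\rho_n-2}\cdots E_{n(r)})\phi_0^r
\]
restricted to $S_{\rho_n}$, so the task reduces to showing that this iterated image vanishes.

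The heart of the argument is the analysis of the last two factors $E_{\rho_n-1}E_{\rho_n-2}$. Because $n\geq 3$, the preceding claim together with the construction of $\widetilde{\Gamma}$ ensures that $S_{\rho_n-2}=S_{\rho_{n-1}}$ consists of original antitree vertices while $S_{\rho_n-1}$ consists of added (degree-$2$) vertices. I would then verify the structural identity: for every $\tilde\psi\in \ell^2(S_{\rho_{n-1}})$,
\[
(E_{\rho_n-1}E_{\rho_n-2}\tilde\psi)(w')=\sum_{w\in S_{\rho_{n-1}}}\tilde\psi(w),\qquad w'\in S_{\rho_n},
\]
which is \emph{constant} in $w'$. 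The verification has two ingredients: each added vertex $v\in S_{\rho_n-1}$ has a unique backward neighbor $w_v\in S_{\rho_{n-1}}$, so $(E_{\rho_n-2}\tilde\psi)(v)=\tilde\psi(w_v)$; and the all-edges property of the original antitree between $S_{\rho_{n-1}}$ and $S_{\rho_n}$ guarantees that for every $w'\in S_{\rho_n}$ and every $w\in S_{\rho_{n-1}}$, exactly one such $v$ neighbors $w'$ and satisfies $w_v=w$.

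Applied with $\tilde\psi=(E_{\rho_n-3}\cdots E_{n(r)})\phi_0^r$ (or $\tilde\psi=\phi_0^r$ itself when $k_0=2$), the identity shows that $\phi_{k_0}^r$ is a scalar multiple of $\mathbf{1}_{S_{\rho_n}}$. But $\widetilde{G}$ is spherically symmetric, so $\delta_o$ and every iterate $\Delta_d^k\delta_o$ are radially symmetric; hence $H_0$ contains $\mathbf{1}_{S_{\rho_n}}$. The orthogonality $H_r\perp H_0$ (valid for $r\geq 1$) then forces the scalar to vanish, so $\phi_{k_0}^r=0$ and therefore $\phi=0$. The main obstacle is the combinatorial identity above — one must carefully distinguish `original' from `added' spheres in $\widetilde{\Gamma}$ and exploit the all-edges property of the antitree; everything else is formal manipulation of Theorem \ref{thm_br} and the mutual orthogonality of the $H_r$'s.
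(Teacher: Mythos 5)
Your proof is correct and follows essentially the same route as the paper's: both exploit the all-edges property of the antitree (seen through the added degree-two vertices) to show that $\Delta_d^{\rho_n-n(r)}\phi_0^r$ restricted to $S_{\rho_n}$ collapses to a fully symmetric sum, which is then killed by orthogonality to the symmetric functions. The only cosmetic difference is where the orthogonality is invoked: the paper applies it at the source sphere, writing $\Delta_d^{\rho_n-n(r)}(\phi_0^r)(v)=c\sum_{w\in S_{n(r)}}\phi_0^r(w)=0$ since $G_v\cap S_{n(r)}=S_{n(r)}$, whereas you apply it at the target sphere by showing the image is constant on $S_{\rho_n}$ and pairing it against the constant function there, which lies in $H_0$.
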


\begin{proof}
	For every $0\leq k\leq \rho_{n}-2$ and $v\in S_{\rho_{n}}$, $G_{v}\cap S_{k}=S_{k}$.
	In addition, $\phi_{0}^{r}$ is orthogonal to the symmetric functions.
	Combining this with the fact that $\Delta_d^{\rho_{n}-n(r)}(\phi_{0}^{r})(v)=c\underset{w\in G_{v}\cap S_{n(r)}}{\sum}\phi_{0}^{r}(w)$
	for some $c\in\mathbb{N}$, we get the result.
\end{proof}
\begin{cor}
	For every $3\leq n\in\mathbb{N}$, $\ell^{2}(S_{\rho_{n}})=span\left\{
\phi_{\rho_{n}}^{0},\phi_{1}^{r_{1}},\phi_{1}^{r_{2}},\ldots,\phi_{1}^{r_{|S_{\rho_{n}|-1}}}\right\} $
	for $r_{1},\ldots,r_{|S_{\rho_{n}}|-1}$ such that $n(r_{i})=\rho_{n}-1$
	for every $1\leq i\leq|S_{\rho_{n}}|-1$. In addition, for every such
	$i$, $\Delta_d\left(\phi_{1}^{r_{i}}\right)|_{S_{\rho_{n}+1}}\neq{0}$.
\end{cor}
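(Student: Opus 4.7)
The plan is to combine the two preceding claims with a basis-counting argument based on Theorem \ref{thm_br}. The key observation I would invoke is that $\{\phi_k^r : r, k \geq 0,\ \phi_k^r \neq 0\}$ is an orthonormal basis of $\ell^2(\widetilde G)$ in which every basis element is supported on a single sphere $S_{n(r)+k}$; hence the basis elements supported on $S_{\rho_n}$ form an orthonormal basis of $\ell^2(S_{\rho_n})$. These are precisely the $\phi_{\rho_n - n(r)}^r$ with $n(r) \leq \rho_n$ and $\phi_{\rho_n - n(r)}^r \neq 0$.

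I would then enumerate the admissible $(r, k)$ pairs with $n(r) + k = \rho_n$. The case $r = 0$ supplies $\phi_{\rho_n}^0$, which is nonzero because $H_0$ is the space of radial functions and contains a nonzero multiple of $\mathbf{1}_{S_{\rho_n}}$. The first claim rules out any $r \geq 1$ with $n(r) = \rho_n$, and the second claim (applied for $r \geq 1$) forces $\phi_{\rho_n - n(r)}^r = 0$ whenever $1 \leq n(r) \leq \rho_n - 2$. Thus the only remaining contributions arise for indices $r$ with $n(r) = \rho_n - 1$, each yielding $\phi_1^r$. Since $\dim \ell^2(S_{\rho_n}) = |S_{\rho_n}|$, counting forces exactly $|S_{\rho_n}| - 1$ such indices $r_1, \ldots, r_{|S_{\rho_n}| - 1}$ with $\phi_1^{r_i} \neq 0$, giving the claimed spanning set.

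For the second assertion, I would use the local structure of $\widetilde\Gamma$ near $S_{\rho_n + 1}$. For $n \geq 3$, $\rho_n + 1 = \widetilde\rho_{n-1}$, so every $w \in S_{\rho_n + 1}$ is a dummy vertex of degree $2$ in $\widetilde G$, with one neighbor $v_w \in S_{\rho_n}$ and one neighbor in $S_{\rho_{n+1}}$. Since $\phi_1^{r_i}$ is supported on $S_{\rho_n}$, it vanishes at $w$ and at the other neighbor, so
\[
\Delta_d(\phi_1^{r_i})(w) = -\phi_1^{r_i}(v_w).
\]
Picking $v^* \in S_{\rho_n}$ with $\phi_1^{r_i}(v^*) \neq 0$ (such $v^*$ exists because $\phi_1^{r_i} \neq 0$), any dummy vertex $w$ placed on an edge $(v^*, u)$ with $u \in S_{\rho_{n+1}}$ satisfies $v_w = v^*$, so $\Delta_d(\phi_1^{r_i})(w) = -\phi_1^{r_i}(v^*) \neq 0$; such a $w$ exists because in the original antitree $\Gamma$, $v^*$ was adjacent to every vertex of $S_{\rho_{n+1}}$. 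I expect no substantive obstacle: the main subtlety is the dimension-counting step, which relies crucially on both preceding claims and on the single-sphere support of the $\phi_k^r$ guaranteed by Theorem \ref{thm_br}(ii).
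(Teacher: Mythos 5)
Your argument is correct and is essentially the paper's own proof, spelled out in more detail: the first part combines the two preceding claims with the fact that $\ell^{2}(S_{\rho_{n}})\subseteq\oplus_{r}H_{r}$ and the single-sphere support of the $\phi_{k}^{r}$ from Theorem \ref{thm_br}$(ii)$, followed by a dimension count, and the second part uses that each (dummy) vertex of $S_{\rho_{n}+1}$ has exactly one neighbour in $S_{\rho_{n}}$, so $\Delta_{d}\phi$ cannot vanish on $S_{\rho_{n}+1}$ for $0\neq\phi\in\ell^{2}(S_{\rho_{n}})$. No gaps.
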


\begin{proof}
	The first part follows from the previous claims and from the fact
	that $\ell^{2}(S_{\rho_{n}})\subset\underset{r\in\mathbb{N}}{\oplus}H_{r}$.
	The second part follows from the fact that for $0\neq\phi\in\ell^{2}(S_{\rho_{n}})$,
	$\Delta_d(\phi)|_{S_{\rho_{n}+1}}\neq0$, as again, every vertex in $S_{\rho_{n}+1}$
	is connected to exactly one vertex in $S_{\rho_{n}}$.
\end{proof}
Corollary 5.3 implies that for every $n\geq3$, a subspace of $\ell^2\left(S_{\rho_{n}+1} \right)$,
of dimension $|S_{\rho_{n}}|-1$, is spanned by $\left\{ \phi_{1}^{r_{1}},\ldots,\phi_{1}^{r_{|S_{\rho_{n}}|-1}}\right\} $.
For every $r\in\mathbb{N}$ such that $n(r)<\rho_{n}-1$, $H_{r}\cap \ell^2(S_{\rho_{n}+1})=\emptyset$.
This follows, as before, from the fact that for every $v\in S_{\rho_n+1}$,
$G_{v}\cap S_{n(r)}=S_{n(r)}$. Another one-dimensional space is spanned
by $\phi_{_{\rho_{n}+1}}^{0}$. Recalling that $|S_{\rho_{n}+1}|=|S_{\rho_{n}}|\cdot|S_{\rho_{n+1}}|$,
this means that $\left|\left\{ r\in\mathbb{N}\,|\,n(r)=\rho_{n}+1\right\}
\right|=|S_{\rho_{n+1}}|\cdot|S_{\rho_{n}}|-|S_{\rho_{n}}|=|S_{\rho_{n}}|\cdot\left(|S_{\rho_{n+1}}|-1\right)$
(as these span the rest of $\ell^{2}(S_{\rho_{n}+1})$). Noting
that for every $n\geq2$, $\rho_{n}+1=\tilde{\rho}_{n-1}$, we conclude the following.
\begin{prop}
	Denote $k_{n}=|S_{\rho_{n}}|\cdot\left(|S_{\rho_{n+1}}|-1\right)$, and
	for $n\geq2$, $B_{n}=\left\{ r\in\mathbb{N}\,|\,n(r)=\tilde{\rho}_{n-1}\right\} $.
	Then for every $n\geq2$, $\left|B_{n}\right|=k_{n}$. Furthermore,
	define $K_{n}\coloneqq\left\{ r\in B_{n}\,|\,dim(H_{r})=3\right\} $.Then
	$|K_{n}|=|S_{\rho_{n+1}}|-1$, and for every $r\in B_{n}\setminus K_{n}$,
	$dim(H_{r})=1$.
\end{prop}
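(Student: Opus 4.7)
The plan is to identify $\ell^{2}(S_{\tilde{\rho}_{n-1}})$ with the space of ``matrices'' $\ell^{2}(S_{\rho_{n}}\times S_{\rho_{n+1}})$ via the bijection sending the added midpoint $v_{u,w}\in S_{\tilde\rho_{n-1}}$ to the pair $(u,w)$ of its original endpoints in the antitree. The key geometric input, obtained from the full-bipartite structure of an antitree together with the midpoint subdivision, is that for $v=v_{u,w}$ one has $G_{v}\cap S_{\rho_{n}}=\{u\}$, $G_{v}\cap S_{\tilde{\rho}_{n-2}}=\{v_{y,u}:y\in S_{\rho_{n-1}}\}$ (depending only on $u$), and $G_{v}\cap S_{m}=S_{m}$ for every sphere index $m\leq \rho_{n-1}$; the analogous statements hold above $S_{\tilde{\rho}_{n-1}}$.

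I proceed by induction on $n\geq 2$, proving $|B_{n}|=k_{n}$ and $|K_{n}|=|S_{\rho_{n+1}}|-1$ simultaneously. First I enumerate the previously constructed $H_{r'}$ (with $n(r')<\tilde{\rho}_{n-1}$) contributing nontrivially to $\ell^{2}(S_{\tilde{\rho}_{n-1}})$. Using Lemma \ref{En} and Remark \ref{remark_Br}, $\phi_{k}^{r'}(v)$ is a constant multiple of $\sum_{x\in G_{v}\cap S_{n(r')}}\phi_{0}^{r'}(x)$. The geometry above together with the orthogonality of $\phi_{0}^{r'}$ to $H_{0}$ kills any contribution from $n(r')\leq \rho_{n-1}$ (except $r'=0$, which provides the constant function on $S_{\tilde\rho_{n-1}}$); Claim 5.1 eliminates $n(r')=\rho_{n}$ for $n\geq 3$, while in the base case $n=2$ the analogous $r'$ with $n(r')=\rho_{2}=1$ are handled directly and give the same shape. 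The remaining contributors are $r'\in B_{n-1}$, and their $\phi_{2}^{r'}$ viewed as a matrix depends only on $u$, and as a function of $u$ is proportional to $\phi_{1}^{r'}\in\ell^{2}(S_{\rho_{n}})$. By the inductive hypothesis $|K_{n-1}|=|S_{\rho_{n}}|-1$, and by Corollary 5.3 these $\phi_{1}^{r'}$'s together with $\phi_{\rho_{n}}^{0}$ already span all of $\ell^{2}(S_{\rho_{n}})$; consequently the span of the previous contributions in $\ell^{2}(S_{\tilde{\rho}_{n-1}})$ is precisely the $|S_{\rho_{n}}|$-dimensional subspace $U$ of matrices depending only on $u$. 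Hence the new $\phi_{0}^{r}$ for $r\in B_{n}$ form an orthonormal basis of $U^{\perp}$, giving $|B_{n}|=\dim U^{\perp}=|S_{\rho_{n}}|(|S_{\rho_{n+1}}|-1)=k_{n}$.

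For the dimension of each $H_{r}$ with $r\in B_{n}$ I run the same propagation upward. Set $h(w)=\sum_{u}\phi_{0}^{r}(u,w)$. By the ancestor-set computations above $S_{\tilde{\rho}_{n-1}}$, I obtain $\phi_{1}^{r}(w)\propto h(w)$ on $S_{\rho_{n+1}}$, $\phi_{2}^{r}(w,y)\propto h(w)$ on $S_{\tilde{\rho}_{n}}$, and finally $\phi_{3}^{r}(y)\propto \sum_{u,w}\phi_{0}^{r}(u,w)=0$ on $S_{\rho_{n+2}}$, the last vanishing because $\phi_{0}^{r}\perp H_{0}\cap\ell^{2}(S_{\tilde{\rho}_{n-1}})$. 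Thus $\phi_{k}^{r}=0$ for every $k\geq 3$, and $\phi_{1}^{r}$ and $\phi_{2}^{r}$ vanish simultaneously, so $\dim H_{r}\in\{1,3\}$ with $\dim H_{r}=3$ iff $h\not\equiv 0$.

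Finally I count. The set of $r\in B_{n}$ with $h\equiv 0$ corresponds to $\phi_{0}^{r}\in U^{\perp}\cap W^{\perp}$, where $W$ is the subspace of matrices depending only on $w$. Since $U\cap W$ is the line of constants, $\dim(U+W)=|S_{\rho_{n}}|+|S_{\rho_{n+1}}|-1$, and therefore $\dim(U^{\perp}\cap W^{\perp})=(|S_{\rho_{n}}|-1)(|S_{\rho_{n+1}}|-1)$. Subtracting from $k_{n}$ gives $|K_{n}|=|S_{\rho_{n+1}}|-1$ and the remaining $r\in B_{n}\setminus K_{n}$ all satisfy $\dim H_{r}=1$, closing the induction. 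The main obstacle is the careful bookkeeping: tracking which sphere each $\phi_{k}^{r'}$ inhabits and verifying that the bipartite geometry really forces all previous contributions into $U$ and all upward propagations into the $h$-controlled family; once this is pinned down, the argument reduces to a clean two-way matrix dimension count.
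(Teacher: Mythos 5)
Your proof is correct and is in substance the same dimension count the paper carries out in the discussion preceding the Proposition (Claims 5.1--5.2, Corollary 5.3 and the bullet list): you count the contributions of the previously constructed cyclic subspaces to $\ell^{2}(S_{\tilde{\rho}_{n-1}})$, observe via the complete-bipartite geometry and orthogonality to the constants that they span exactly the $|S_{\rho_n}|$-dimensional subspace of functions depending only on the lower endpoint, and then track the upward propagation through $h(w)=\sum_u\phi_0^r(u,w)$ to get $\dim H_r\in\{1,3\}$. The matrix identification of $\ell^{2}(S_{\tilde{\rho}_{n-1}})$ with $\ell^{2}(S_{\rho_n}\times S_{\rho_{n+1}})$ and the subspaces $U$, $W$ are a clean repackaging rather than a different route. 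One step deserves to be made explicit: in the final count you pass from ``$r\in B_n\setminus K_n$ iff $\phi_0^r\in U^{\perp}\cap W^{\perp}$'' to ``$|B_n\setminus K_n|=\dim\left(U^{\perp}\cap W^{\perp}\right)$''. For an arbitrary orthonormal basis of $U^{\perp}$ this only gives the inequality $|B_n\setminus K_n|\le\dim\left(U^{\perp}\cap W^{\perp}\right)$; the equality needs the fact that each $\phi_0^r$ lies entirely in $W$ or entirely in $W^{\perp}$. This follows from Theorem \ref{thm_br}, since $\Lambda_{\tilde{\rho}_{n-1},+1}=|S_{\rho_n}|P_W$ under your identification (each midpoint has a unique upper neighbour), so the eigenfunction property forces the dichotomy; alternatively one can count $K_n$ directly as the paper does, via Corollary 5.3 applied at the sphere $S_{\rho_{n+1}}$. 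With that one sentence added, the argument is complete.
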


Denote by $Q$ the set $\left\{ r\in\mathbb{N}\,|\,\exists n\in\mathbb{N}\text{ s.t. \ensuremath{n(r)=\tilde{\rho}_{n}}}\right\} \cup\{0\}$,
and define $W=\underset{r\in Q}{\oplus}H_{r}$. From all of the above,
we conclude the following:
\begin{itemize}
	\item For $n\geq3$, $\ell^{2}(S_{n})\subseteq W$.
	\item For $n=2$, $dim\left(\ell^{2}(S_{n})\cap W\right)=|S_{1}|\cdot|S_{3}|-|S_{1}|+1$.
	\item For $n=0,1$, the intersection of $W$ with $\ell^{2}(S_{n})$ consists
	of the symmetric functions on $S_{n}$ (for $S_{0}$ this is actually
	the whole space, as $|S_{0}|=1$).
\item Finally, for $r\notin Q$ $H_{r}$ is orthogonal to $W$. Thus, the
intersection of $H_{r}$ with $\ell^{2}(S_{n})$ is trivial for every
$n\geq3$ and for $n=0$. This implies that for every $r\notin Q$,
$n(r)=1$ and there will be exactly $|S_{1}|-1$
such $r$'s.
\end{itemize}

By considering the spaces $\left(F_{r}\right)_{r=0}^{\infty}$
given by Theorem \ref{thm_main} we can now state the analog of \cite[Theorem 1.3]{BrKe}

\begin{thm} \label{thm_antitrees}
Let $\Gamma$ be an infinite spherically homogeneous metric antitree. Then the spectrum of $\Delta$ on $\Gamma$ is given by the spectrum of a
weighted Laplacian on the whole line and the closure of the union of spectra of countably many weighted Laplacians on compact segments.
\end{thm}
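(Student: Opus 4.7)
The plan is to apply Theorem~\ref{thm_main} to $\widetilde{\Gamma}$, the locally balanced graph unitarily equivalent to $\Gamma$ given by Proposition~\ref{u_eq_loc_bal}. This decomposes $\Delta$ into a countable direct sum $\bigoplus_{r\in\mathbb{N}}\Delta_r$ of one-dimensional weighted Laplacians on intervals $(a_r,b_r)$, so the task reduces to identifying the single index that gives a non-compact interval (the whole-line piece) and showing that every other index gives a bounded segment.

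The $r=0$ component is the spherically symmetric sector. Since $\phi_0^0=\delta_o$, the function $h^0$ is strictly positive on all of $\widetilde{\Gamma}$ and hence $a_0=0$, $b_0=h(\widetilde{\Gamma})=\infty$, making $\Delta_0$ a weighted Laplacian on $(0,\infty)$. For every $r\geq 1$ my claim is that $b_r<\infty$. The first input is that $\phi_0^r$ is orthogonal to the constant function on $S_{n(r)}$: antitrees are spherically symmetric, so the cyclic vectors $\Delta_d^k\delta_o$ are constant on each sphere and their span (contained in $H_0$) already covers the constants on $S_{n(r)}$; by construction $\phi_0^r$ lies in the orthogonal complement within $\ell^2(S_{n(r)})$, giving $\sum_{v\in S_{n(r)}}\phi_0^r(v)=0$.

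The second, and main, input is a combinatorial fact specific to antitrees: for $x\in\widetilde{\Gamma}$ with $|x|$ sufficiently large, $G_x\cap S_{n(r)}=S_{n(r)}$. This exploits the defining feature of the antitree that every pair of adjacent original spheres is joined by all possible edges, so once $t(e_x)$ lies at least two original spheres past $S_{n(r)}$, any $y\in S_{n(r)}$ lies on some shortest path from $o$ to $t(e_x)$. Combined with the definition~\eqref{eq:hphi} of $h^r$, this forces $h^r(x)=0$ for all sufficiently deep $x$, so $b_r<\infty$ and $\Delta_r$ acts on a compact segment (hence has purely discrete spectrum).

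The spectral statement $\sigma(\Delta)=\sigma(\Delta_0)\cup\overline{\bigcup_{r\geq 1}\sigma(\Delta_r)}$ then follows from the direct sum decomposition. The main obstacle is the combinatorial step: verifying $G_x\cap S_{n(r)}=S_{n(r)}$ for deep $x$ requires a short case analysis depending on whether $S_{n(r)}$ is an original or a dummy sphere of $\widetilde{\Gamma}$ (the dummy case requiring one extra original sphere of depth to mix all branches), but in both cases the assertion reduces to the antitree's complete connectivity between consecutive original spheres.
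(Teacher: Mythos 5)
Your proposal is correct and follows essentially the same route as the paper: both rest on the decomposition of Theorem \ref{thm_main} applied to the locally balanced version of the antitree, the orthogonality of $\phi_{0}^{r}$ (for $r\geq 1$) to the spherically symmetric functions, and the fact that complete connectivity between consecutive original spheres forces $G_{x}\cap S_{n(r)}=S_{n(r)}$, hence $h^{r}(x)=0$, for all sufficiently deep $x$, so that $b_{0}=\infty$ while $b_{r}<\infty$ for $r\geq 1$. The only difference is that the paper additionally classifies exactly which spheres can support a $\phi_{0}^{r}$ and the dimensions of the corresponding cyclic subspaces, which yields the precise endpoints of the compact segments, whereas your uniform argument that $b_{r}<\infty$ is all the statement actually requires.
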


\begin{proof}
Recall that for $n\in\mathbb{N}$, $t_{n}$
denotes the metric distance of $v$ from $o$ for some $v\in S_{n}$.
In addition, recall that $a_{r}$ and $b_{r}$ denote the part of
the real line on which functions in $F_{r}$ are supported.

Now, note that for every $x\in\widetilde{\Gamma}$, $G_{x}\cap S_{0}=\left\{ o\right\}$.	
This means that $h^{\delta_{o}}(x)\neq0$. Thus, $b_{0}=\infty$ and $\Delta$ restricted to $F_0$ is unitarily equivalent to a weighted
Laplacian on the positive real line, with matching conditions as described in the previous section on the points
$\left(t_{n}\right)_{n\in\mathbb{N}}$.

For every $r_{0}\in\left\{ r\in\mathbb{N}\,|\,n(r)=1\right\} $, $F_{r_{0}}$ is orthogonal to the symmetric functions. Thus, as every $v\in
S_{3}$ is connected to every vertex in $S_{1}$ (through a vertex in $S_2$), $\Delta$ restricted to $F_{r_0}$ is unitarily equivalent to a
weighted Laplacian on the compact segment $\left[0,t_{3}\right]$.

Finally, let $n\in\mathbb{N}$ and let $r_{0}\in\left\{ r\in\mathbb{N}\,|\,n(r)=\tilde{\rho}_{n}\right\}$ and consider two cases.
	\begin{itemize}
		\item $dim(H_{r_{0}})=1$: In this case, $b_{r}=t_{\tilde{\rho}_{n}+1}$, and we get
		a copy of a weighted Laplacian on the compact segment $\left[t_{\tilde{\rho}_{n}-1},t_{\tilde{\rho}_{n}+1}\right]$.
		\item $dim(H_{r_{0}})=3$: In this case, $b_{r}=t_{\tilde{\rho}_{n}+3}$, and we get
		a copy of a weighted Laplacian on the compact segment $\left[t_{\tilde{\rho}_{n}-1},t_{\tilde{\rho}_{n}+3}\right]$.
	\end{itemize}
\end{proof}
\begin{rem}\label{rem_antitrees}
In an antitree, there is an explicit formula for $g_{0}^{j}$ which is given by $g_{0}^{j}=|S_j|\cdot|S_{j+1}|$. Thus, with Remark \ref{rem_dj}
in mind, $d_{j}^{0}=\sqrt{\frac{|S_{j+1}|}{|S_{j-1}|}}$.
\end{rem}
As a demonstration of the usefulness of an explicit decomposition to one dimensional operators, we mention Remling's Theorem \cite{remling},
whose main message is the fact that absolutely continuous spectrum for one dimensional operators is extremely restrictive. In the context of
antitrees, following the discussion above and applying \cite[Theorem 6]{BF} we immediately obtain that
\begin{thm} \label{thm_antitrees}
	Let $\Gamma$ be a spherically homogeneous metric antitree and assume that
	\begin{equation} \nonumber
	\inf_n (t_{n+1}-t_n)>0
	\end{equation}
	and
	\begin{equation} \label{eq:antitree}
	\underset{n\rightarrow \infty}{\liminf}\  \frac{|S_{n+1}|}{|S_{n-1}|}>1.
	\end{equation}
	Then if $\limsup_{n \rightarrow \infty}(t_{n+1}-t_n)=\infty$ then the absolutely continuous spectrum of $\Delta$ on $\Gamma$ is empty.
\end{thm}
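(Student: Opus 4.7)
The plan is to derive the theorem as a direct corollary of the decomposition machinery established earlier, combined with the Remling-type input provided by \cite[Theorem 6]{BF}. By Theorem \ref{thm_main} together with the antitree analysis preceding the statement (i.e.\ the first ``Theorem \ref{thm_antitrees}''), $\Delta$ on $\Gamma$ is unitarily equivalent to a direct sum of weighted Laplacians, where exactly one summand (the component $\Delta_0$ acting on $F_0$, coming from $\phi_0^0=\delta_o$) lives on $[0,\infty)$ and all remaining summands act on compact intervals. Since any self-adjoint realization of a regular Sturm--Liouville operator on a compact interval has purely discrete spectrum, none of the compact-interval components contributes to $\sigma_{ac}(\Delta)$. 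Hence $\sigma_{ac}(\Delta)=\sigma_{ac}(\Delta_0)$, and the problem reduces to showing $\sigma_{ac}(\Delta_0)=\emptyset$.

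The next step is to identify $\Delta_0$ explicitly. By the description in Section 4, $\Delta_0$ acts as $-f''$ on each interval $I_j=(t_j,t_{j+1})$, with matching conditions
\[
f(t_j+)=d_j^0\,f(t_j-),\qquad f'(t_j+)=c_j^0\,f'(t_j-)
\]
at each inner point $t_j$. Using Remark \ref{rem_dj} together with the antitree formula $g_0^{\,j}=|S_j|\cdot|S_{j+1}|$ from Remark \ref{rem_antitrees}, these coefficients become
\[
d_j^0=\sqrt{\frac{|S_{j+1}|}{|S_{j-1}|}},\qquad c_j^0=\frac{1}{d_j^0}=\sqrt{\frac{|S_{j-1}|}{|S_{j+1}|}}.
\]
The hypothesis $\liminf_{n\to\infty}|S_{n+1}|/|S_{n-1}|>1$ therefore gives $\liminf_{j\to\infty}d_j^0>1$, so the interface conditions are eventually uniformly ``non-trivial''; the hypothesis $\inf_n(t_{n+1}-t_n)>0$ gives a uniform positive lower bound on the interval lengths; and the hypothesis $\limsup_n(t_{n+1}-t_n)=\infty$ supplies arbitrarily long free intervals on which $\Delta_0$ acts as the pure Dirichlet Laplacian.

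With $\Delta_0$ put in this standard form, I invoke \cite[Theorem 6]{BF}, which is precisely designed to rule out absolutely continuous spectrum for one-dimensional operators of this type whose local structure contains arbitrarily long ``free'' pieces separated by genuine (non-asymptotically-trivial) interfaces. The principle behind that theorem is Remling's theorem on the absence of a.c.\ spectrum for half-line operators whose right limits are reflectionless: arbitrarily long free segments force every right limit to be the free Laplacian on $\mathbb{R}$, yet the persistent interfaces (quantified by $\liminf d_j^0>1$) obstruct reflectionlessness on the support of any a.c.\ part, forcing it to be empty. Substituting the three hypotheses of the theorem into the hypotheses of \cite[Theorem 6]{BF} yields $\sigma_{ac}(\Delta_0)=\emptyset$, completing the proof.

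The main obstacle is really just the bookkeeping: one must check that the weighted Laplacian $\Delta_0$ produced by our decomposition genuinely fits the framework of \cite[Theorem 6]{BF} (in particular, that $\liminf d_j^0>1$ is the correct translation of $\liminf|S_{n+1}|/|S_{n-1}|>1$, and that the interval-length hypotheses on $(t_{n+1}-t_n)$ map to the scale hypotheses in that reference). Once this identification is made, the conclusion is immediate from the cited theorem together with the observation that the compact-interval summands contribute nothing to $\sigma_{ac}$.
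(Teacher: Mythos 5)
Your proposal is correct and follows essentially the same route as the paper, which itself gives only a one-line proof: it reduces to the one-dimensional components via the antitree decomposition, notes $d_j^0=\sqrt{|S_{j+1}|/|S_{j-1}|}$ from Remarks \ref{rem_dj} and \ref{rem_antitrees}, and cites \cite[Theorem 6]{BF}. Your write-up actually supplies more detail than the paper does (the discreteness of the compact-segment components and the explicit matching of hypotheses to those of \cite{BF}), but the underlying argument is identical.
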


\begin{rem}
Theorem \ref{thm_antitrees} coincides with (part of) Theorem 8.1 in the recent preprint \cite{KN}, which has an extensive analysis of spherically homogeneous metric antitrees, also using a decomposition inspired by \cite{BrKe}. Using a modification of the methods of \cite{BF}, it is likely one could replace \eqref{eq:antitree} with $\underset{n\rightarrow \infty}{\liminf}\  \left| \frac{|S_{n+1}|}{|S_{n-1}|}-1 \right|>0$ and still obtain absence of absolutely continuous spectrum.
\end{rem}

\section{Appendix -Proof of Proposition \ref{u_eq_loc_bal}}

\begin{proof}[Proof of Proposition \ref{u_eq_loc_bal}]
We say that $n\in\mathbb{N}$ is ``bad'' if the number of edges
of generation $n$ is greater than the number of vertices of generation
$n$ and of the number of vertices of generation $n+1$. For such
$n$, and $e=(i(e),t(e))$ such that $gen(e)=n$, we add a vertex
$w$, the edges $e_{1}=(i(e),w)$ and $e_{2}=(w,t(e))$ such that $l(e_{i})=\frac{l(e)}{2}$,
and remove $e$ from $E(\Gamma)$. In simple words, we divide the
edge into two edges with equal length. Note that in the resulting
graph, the numbers $n,\,n+1$ are not bad, because for every new edge
$e$, there is a unique vertex $w_{e}$ which was added with $e$.
In addition, none of the other generations became bad, because we
did not remove any vertices from $V(\Gamma)$. Thus, if we follow
this procedure for every bad $n$, the resulting graph will be locally
balanced. Denote that graph by $\overline{\Gamma}$, and let $\overline{\Delta}=\Delta_{\overline{\Gamma}}$.
Note that as measure spaces, there is no difference between $\Gamma$
and $\overline{\Gamma}$. The Kirchhoff boundary conditions assure
us that the transformation $T:\,D(\Delta)\rightarrow D(\overline{\Delta})$
defined by $T(\varphi)=\varphi$ is unitary, and maps $D(\Delta)$
bijectively onto $D(\overline{\Delta}).$

It is left to prove that $\overline{\Gamma}$ is family preserving.
Denote by $\overline{S_{n}}$ the n'th sphere in $\overline{\Gamma}$.
Let $v,u\in\overline{S_{n}}$ such that $v$ and $u$ are forward
neighbors. Consider the following cases:

$(i)$ $\overline{S_{n}}$ is not a part of $\Gamma$, meaning we
added that sphere during the above process. In this case, $v$ and
$u$ were added in the middle of the edges $(w_{1},z)$, $(w_{2},z)$
which means that $w_{1}$ and $w_{2}$ are forward neighbors in $\Gamma$.
That implies that there exists a rooted graph automorphism $\tau:\,V(\Gamma)\rightarrow V(\Gamma)$
such that $\tau(w_{1})=\tau(w_{2})$, and $\forall k\geq0\,\tau|_{S_{n+k}}=Id$.
We define $\overline{\tau}:\,V(\overline{\Gamma})\rightarrow V(\overline{\Gamma})$
by $\overline{\tau}|_{V(\Gamma)}=\tau$, and if $x$ was added in
the middle of the edge $(v_{1},v_{2})$, $\overline{\tau}(x)$ will
be the vertex that was added in the middle of $(\tau(v_{1}),\tau(v_{2}))$
(such vertex exists because of spherical symmetry). Now, we have that
$\overline{\tau}(v)=u$, and $\forall k\geq0\,\overline{\tau}|_{\overline{S_{n+k}}}=Id$
, as required.

$(ii)$ $\overline{S_{n}}$ is a part of $\Gamma$. In that case,
$\overline{S_{n+1}}$ is also a part of $\Gamma$, because an added
vertex cannot connect vertices from the preceding sphere, so we can
define $\overline{\tau}$ exactly as in case $(i)$, and get the desired
automorphism.

The proof for backward neighbors is exactly the same.
\end{proof}



\begin{thebibliography}{FLSSS}

\bibitem{ASW} M.~Aizenman, R.~Sims and S.~Warzel, {\it Stability
of the absolutely continuous spectrum of random Schr\"odinger operators
on tree graphs}, Probab.\ Theory Related Fields \textbf{136}
(2006), 363--394.

\bibitem{ASW1} M.~Aizenman, R.~Sims, S.~Warzel, {\it Absolutely
continuous spectra of quantum tree graphs with weak disorder}, Commun.\
Math.\ Phys.\ \textbf{264} (2006), 371--389.

\bibitem{AF} C.~Allard, R.~Froese, {\it A Mourre estimate for a
Schr\"odinger operator on a binary tree}, Rev.\ Math.\ Phys.\ \textbf{12} (2000), 1655--1667.

\bibitem{BBJL} R.~Band, G.~Berkolaiko, C.H.~Joyner, W.~Liu, {\it Quotients of finite-dimensional operators by symmetry representations},  Preprint. arXiv: 1711.00918.

\bibitem{BL} R.~Band, G.~L\'{e}vy, {\it Quantum graphs which optimize the spectral gap},
 Ann.\ Henri Poincar\'e \textbf{18} (2017) 3269--3323.

\bibitem{BCFK} G.~Berkolaiko, R.~Carlson, S.~Fulling, and P.~Kuchment, {\it Quantum Graphs and Their Applications}, Contemp.\ Math.\
{\bf 415}, Amer.\ Math.\ Soc., Providence, RI, 2006.

\bibitem{BK} G.~Berkolaiko and P.~Kuchment, {\it Introduction to quantum graphs}, Mathematical Surveys and Monographs, vol.\ 186, American
    Mathematical Society, Providence, RI, 2013.

\bibitem{BKKM} G.~Berkolaiko, J.~Kennedy, P.~Kurasov and D.~Mugnolo, {\it Edge connectivity and the
 spectral gap of combinatorial and quantum graphs}, J.\ Phys.\ A: Math.\ Theor.\ \textbf{50} (2017).

\bibitem{Br} J.~Breuer, {\it Singular continuous spectrum for the Laplacian on certain sparse trees}, Comm.\ Math.\ Phys.\ \textbf{269}
(2007), 851--857.

\bibitem{Br1} J.~Breuer, {\it Singular continuous and dense point spectrum for sparse tree with finite dimensions}, Probability and
    Mathematical Physics \textbf{42} (2007), 65--83.

\bibitem{Br2} J.~Breuer, {\it Localization for the Anderson model on trees with finite dimensions}, Ann.\ Henri Poincar\'e
\textbf{8} (2007), 1507--1520.

\bibitem{BF} J.~Breuer and R.~L.~Frank, {\it Singular spectrum for radial trees}, Rev.\ Math.\ Phys.\ \textbf{21} (2009), 1--17.

\bibitem{BrKe} J.~Breuer and M.~Keller, {\it Spectral analysis of certain spherically homogeneous graphs}, Operators and Matrices \textbf{4}
(2013), 825--847.

\bibitem{Ca} R.~Carlson, {\it Hill's equation for a homogeneous tree}, Electron.\ J.\ Differential Equations \textbf{23} (1997), 1--30.

\bibitem{Ca1}R.~Carlson, {\it Nonclassical Sturm-Liouville problems and Schr\"odinger operators on radial trees}, Electron.\ J.\ Differential
Equations \textbf{71} (2000), 24pp.\ (Electronic).

\bibitem{EFK1} T.~Ekholm, R.~L.~Frank and H.~Kova\v{r}\'ik, {\it Remarks about Hardy inequalities on metric trees} in \cite{EKKST}.


\bibitem{EFK} T.~Ekholm, R.~L.~Frank and H.~Kova\v{r}\'ik, {\it Eigenvalue estimates for Schr\"odinger operators on metric trees},
Adv.\ in Math.\ \textbf{226} (2011), 5165--5197.

\bibitem{EKKST} P.~Exner, J.~P.~Keating, P.~Kuchment, T.~Sunada, and A.~Teplyaev, Analysis on Graphs and Its Applications, Proc.\ Symp.\ Pure
    Math.\ {\bf 77}, Providence, RI, Amer.\ Math.\ Soc., 2008


\bibitem{GS}  S.~Gnutzmann and U.~Smilansky,  {\it Quantum  graphs:  Applications  to quantum chaos and universal spectral statistics}, Adv.\
    in Physics, {\bf 55} (2006), 527--625.

\bibitem{GHM} A.~Grigor'yan, X.~Huang and J.~Masamune, {\it On stochastic completeness of jump processes}, Math.\ Z.\ {\bf 271} (2012),
    1211--1239.

\bibitem{GuS} B.~Gutkin and U.~Smilansky, {\it Can one hear the shape of a graph}, J.\ Phys.\ A {\bf 34} (2001), 6061--6068.

\bibitem{HP} P.~D.~Hislop and O.~Post, {\it Anderson Localization for radial tree-like random quantum graphs}, Waves Random Complex
Media \textbf{19} (2009), 216--261.



\bibitem{KLW} M.~Keller, D.~Lenz and R.~K.~Wojciechowski, {\it Volume growth, spectrum and stochastic completeness of infinite graphs},
Math.\ Z.\ \textbf{274} (2013), 905--932.

\bibitem{Kl} A.~Klein, {\it Extended states in the Anderson model on the Bethe lattice}, Adv.\ Math.\ \textbf{133} (1998), 163--184.

\bibitem{KN} A~Kostenko and N.~Nicolussi, {\it Quantum graphs on radially symmetric antitrees}, Preprint. arXiv: 1901.05404.

\bibitem{Kuch} P.~Kuchment, {\it Quantum Graphs II. Some spectral properties of quantum and combinatorial graphs}, J.\ Phys.\ A {\bf 38}
    (2005) 4887--4900.

\bibitem{MW} B.~Mohar and W.~Woess, {\it A survey on spectra of infinite graphs}, Bull.\ London Math.\ Soc.\ {\bf 21} (1989), 209--234.

\bibitem{NS} K.~Naimark, M.~Solomyak, {\it Geometry of Sobolev spaces on regular trees and the Hardy inequalities},
Russian Journal of Mathematical Physics, \textbf{8} (2001), 322--335.

\bibitem{remling} C.~Remling, {\it The  absolutely  continuous  spectrum  of  Jacobi  matrices}, Annals  of  Math.\ \textbf{174} (2011),
    125--171.

\bibitem{Sa} C.~Sadel, {\it Anderson transition at two-dimensional growth rate on antitrees and spectral theory for operators with one
propagating channel}, Ann.\ Henri Poincar\'e \textbf{17} (2016), 1631--1675.

\bibitem{Sa1} C.~Sadel, {\it GOE statistics for Anderson models on antitrees and thin boxes in $\mathbb{Z}^3$ with deformed Laplacian},
    Electron.\ J.\ Probab.\ {\bf 23} (2018), 24pp.

\bibitem{Sol} M.~Solomyak, {\it On the spectrum of the Laplacian on regular metric trees}, Waves in Random Media \textbf{14}
(2004), 155--171.

\bibitem{Ta} M.~Tautenhahn, {\it Localization criteria for Anderson models on locally finite graphs}, J.\ Stat.\ Phys.\ \textbf{144(1)}
    (2011), 60--75.

\bibitem{Woj} R.~K.~Wojciechowski, {\it Stochastically incomplete manifolds and graphs}, Random Walks, Boundaries and Spectral Theory,
    Progress in
Probability, 2011 Birkh\"auser.

\end{thebibliography}
\end{document}